\documentclass{amsart}

\usepackage[bookmarks,colorlinks,breaklinks]{hyperref}  
\hypersetup{linkcolor=blue,citecolor=blue,filecolor=blue,urlcolor=blue} 

\usepackage{amsmath, amssymb}
\input xy
\xyoption{all}

\newenvironment{customthm}[1]
  {\innercustomthm}
  {\endinnercustomthm}

\newtheorem{theorem}{Theorem}[section]
\newtheorem{lemma}[theorem]{Lemma}
\newtheorem{corollary}[theorem]{Corollary}
\newtheorem{fact}[theorem]{Fact}
\newtheorem{proposition}[theorem]{Proposition}
\newtheorem{claim}[theorem]{Claim}

\newtheorem{reduction}{Reduction}

\newtheorem*{corollary*}{Corollary}

\theoremstyle{definition}

\newtheorem{remark}[theorem]{Remark}
\newtheorem{definition}[theorem]{Definition}

\def\bG{\operatorname{\mathbb{G}}}
\def\bZ{\operatorname{\mathbb{Z}}}
\def\zf{\operatorname{\mathbb{Z}[F]}}
\def\Fq{\mathbb{F}_q}
\def\Fql{\mathbb{F}_{q^\ell}}

\def\lra{\operatorname{\longrightarrow}}

\def\alg{\operatorname{alg}}
\def\id{\operatorname{id}}

\def\orb{\operatorname{Orb}}
\def\height{\operatorname{ht}}

\begin{document}

\title[Effective isotrivial Mordell-Lang]{Effective isotrivial Mordell-Lang\\ in positive characteristic}

\author{Jason Bell}
\address{Jason Bell\\
University of Waterloo\\
Department of Pure Mathematics\\
200 University Avenue West\\
Waterloo, Ontario \  N2L 3G1\\
Canada}
\email{jpbell@uwaterloo.ca}

\author{Dragos Ghioca}
\address{Dragos Ghioca\\
University of British Columbia\\
Mathematics Department\\
1984 Mathematics Road\\
Vancouver, BC \ V6T 1Z2\\
Canada}
\email{dghioca@math.ubc.ca}

\author{Rahim Moosa}
\address{Rahim Moosa\\
University of Waterloo\\
Department of Pure Mathematics\\
200 University Avenue West\\
Waterloo, Ontario \  N2L 3G1\\
Canada}
\email{rmoosa@uwaterloo.ca}

\subjclass[2010]{14Q20, 14G05, 14G17, 11B85, 11G10}

\thanks{The authors were partially supported by their NSERC Discovery Grants.}

\date{\today}

\begin{abstract}
The isotrivial Mordell-Lang theorem of
~\cite{fsets} describes the set $X\cap\Gamma$ when $X$ is a subvariety of a semiabelian variety $G$ over a finite field~$\Fq$ and $\Gamma$ is a finitely generated subgroup of $G$ that is invariant under the $q$-power Frobenius endomorphism $F$.
That description is here made effective, and extended to arbitrary commutative algebraic groups~$G$ and arbitrary finitely generated $\zf$-submodules $\Gamma$.
The approach is to use finite automata to give a concrete description of $X\cap \Gamma$.
These methods and results have new applications even when specialised to the case when $G$ is an abelian variety over a finite field, $X\subseteq G$ a subvariety defined over a function field~$K$, and $\Gamma=G(K)$.
As an application of the automata-theoretic approach, a dichotomy theorem is established for the growth of the number of points in $X(K)$ of bounded height.
As an application of the effective description of $X\cap\Gamma$, decision procedures are given for the following three diophantine problems:
Is $X(K)$ nonempty? Is it infinite? Does it contain an infinite coset?
\end{abstract}

\maketitle

\setcounter{tocdepth}{1}
\tableofcontents

\section{Introduction}

\noindent
The Mordell-Lang theorem, proved by Faltings~\cite{faltings}, is a celebrated result, which describes the intersection of an algebraic subvariety $X$ of a semiabelian variety $G$, defined over a field of characteristic zero, with a finitely generated subgroup $\Gamma$ of $G$. Faltings' theorem says that this intersection is a finite union of cosets of subgroups of $\Gamma$, which, in particular, illustrates connections between the underlying geometric and algebraic structures on $G$.  

In positive characteristic the naive translation of Faltings' theorem is no longer true. For example, if one takes a smooth curve $X$ of genus at least two defined over a finite field $\mathbb{F}_q$, then $X$ embeds in its Jacobian, $G$.  If one then picks a finitely generated extension $K$ of $\mathbb{F}_q$ such that $X$ has a $K$-point $x$ that is not a $\Fq^{\alg}$-point of $X$ then the orbit of $x$ under the action of the $q$-power Frobenius is infinite in $X$ and lies entirely in $G(K)$, which is a finitely generated subgroup.  But $X(K)$ cannot contain a coset of an infinite subgroup of $G(K)$, since $X$ would then be the Zariski closure of this coset and hence itself an abelian variety, contradicting the fact that it is of genus at least two.

Groundbreaking work of Hrushovski \cite{udi} showed that in a natural sense all counterexamples to the naive translation of the Mordell-Lang theorem to the positive characteristic setting are of this type; namely, they arise from semiabelian varieties over finite fields -- the so-called {\em isotrivial} case.  Indeed,  Hrushovski~\cite{udi} proved a relative function field version of positive characteristic Mordell-Lang in the mid-nineties that treated the isotrivial case as exceptional. Hrushovski, however, did not give a description of what general intersections look like in the isotrivial case and this exceptional case was dealt with in later work of the third author and Scanlon as follows:

\begin{theorem}[Moosa-Scanlon~\cite{fsets}]
\label{iml-semiabelian-intro}
Suppose $G$ is a semiabelian variety over a finite field $\mathbb F_q$ of prime characteristic~$p$, and let $F:G\to G$ be the endomorphism induced by the $q$-power Frobenius.
Suppose $X\subseteq G$ is a closed subvariety defined over a field extension of $\mathbb F_q$, and $\Gamma\leq G$ is a finitely generated $F$-invariant subgroup.
Then $X\cap\Gamma$ is a finite union of sets of the form $S+\Lambda$ where $S\subseteq\Gamma$ is a translate of a sum of $F$-orbits and $\Lambda$ is a subgroup.
\end{theorem}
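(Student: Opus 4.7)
The plan is to induct on $\dim G + \dim X$, with the zero-dimensional cases trivial. After replacing $X$ by each of its geometric irreducible components, one may assume $X$ is irreducible. Let $H := \mathrm{Stab}(X)^{\circ}$ be the identity component of the setwise stabilizer of $X$ in $G$. The argument then splits according as $\dim H > 0$ or $H$ is finite.

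If $\dim H > 0$, then $X = \pi^{-1}(\pi(X))$ for the quotient $\pi \colon G \to G/H$. Since $X$ itself need not be $F$-invariant, $H$ need not be $F$-invariant either, but one can replace $H$ by its largest $F$-invariant subgroup $H_0 := \bigcap_{n \geq 0} F^{-n}(H)$, at the cost of decomposing $X$ into finitely many pieces indexed by cosets of $H_0$ in $H$. Applying the induction hypothesis to $\pi(X) \cap \pi(\Gamma)$ inside $G/H_0$ yields a decomposition into sets of the required shape; pulling back through $\pi$ and intersecting with $\Gamma$ preserves the structure, because preimages of cosets are cosets and $F$-orbits lift (modulo subgroups) under isogenies.

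The substantive case is when $\mathrm{Stab}(X)$ is finite. Here the $F$-invariance of $\Gamma$ is used crucially: for any $x \in X \cap \Gamma$, the Zariski closure $Y$ of the forward orbit $\{F^n(x) : n \geq 0\}$ is $F$-stable, and by a classical theorem the only $F$-stable subvarieties of $G$ are finite unions of translates of algebraic subgroups defined over $\overline{\mathbb{F}_q}$. Hence every such $x$ lies in a coset of an abelian subvariety $G_0 \leq G$ defined over a finite field. Since $\mathrm{Stab}(X)$ is finite, one cannot have $\gamma + G_0 \subseteq X$ when $\dim G_0 > 0$, so $X \cap (\gamma + G_0)$ is a proper subvariety of $\gamma + G_0$ to which the induction hypothesis applies after a change of ambient group.

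\emph{Main obstacle.} The chief difficulty is to assemble the orbit-wise information into a \emph{uniform} finite union of sets $S + \Lambda$ as $x$ varies through $\Gamma$. Even granted, for each fixed $x$, a Skolem--Mahler--Lech-type description of $\{n \geq 0 : F^n(x) \in X\}$ as a finite union of arithmetic progressions, one must organize these progressions into finitely many families having the geometric shape ``translate of a sum of $F$-orbits plus a subgroup''. Equivalently, one must prove that the class of $F$-sets is closed under intersection with closed subvarieties, and under the projections and pullbacks used in the stabilizer reduction. This demands a careful analysis exploiting the $\mathbb{Z}[F]$-module structure of $\Gamma$ and the arithmetic of $\mathrm{End}(G)$, and constitutes the central technical content of~\cite{fsets}.
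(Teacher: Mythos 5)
Your proposal is a sketch, not a proof, and you say so yourself: the closing paragraph identifies ``assembling the orbit-wise information into a uniform finite union of sets $S+\Lambda$'' as the main obstacle and then defers it to~\cite{fsets}. That is precisely the hard content of the theorem, so the argument as written has a genuine gap. The earlier steps also need care. In the positive-dimensional-stabilizer case, the largest $F$-invariant subgroup $H_0 \leq H$ can a priori have infinite index in $H$, so ``decompose $X$ into finitely many cosets of $H_0$'' is unjustified as stated; in the semiabelian setting it can be repaired by rigidity (every algebraic subgroup is over a finite field and hence $F^r$-invariant for some $r$, so pass to $F^r$), but you should say so. In the finite-stabilizer case, what must be analyzed is the return set $\{n : F^n(x)\in X\}$, not the orbit closure $Y$ per se: the orbit need not lie in $X$, so the classification of $F$-invariant subvarieties does not by itself reduce to a smaller ambient group. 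Even granting all this, nothing yet controls the quantifier over $x\in\Gamma$.

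Your route is essentially the original geometric/model-theoretic one of~\cite{fsets}. This paper deliberately takes a different tack, and that is its point: it gives an automata-theoretic proof of the more general Theorem~\ref{iml-module}, of which the present statement is the semiabelian special case. One first shows $X\cap\Gamma$ is $F$-automatic (Corollary~\ref{iml-fauto-module}) by constructing a single finite automaton whose states are the Frobenius-pullback transforms $(X-\gamma)^{q^{-\ell r}}(K)$; Proposition~\ref{bound} guarantees finitely many such states. This is exactly what supplies the uniformity your sketch lacks: a finite machine replaces the per-point orbit analysis. The combinatorial shape of $X\cap\Gamma$ is then read off from the accepted language: after reducing to $F$-pure $\Gamma$ (Reduction~\ref{fpure}) and normalizing modulo the connected stabilizer $E$ over $\mathbb F_p^{\alg}$ (Reductions~\ref{span1} and~\ref{toE}, via Lemma~\ref{unamb}), the language splits according to the dimension of the reached state. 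The pieces landing in top-dimensional states are sparse (Lemma~\ref{ksparse}); sparse pieces yield sets $E(a_1,\dots,a_r;\delta_1,\dots,\delta_r)$ by Lemma~\ref{expandsparse}, and Proposition~\ref{et} sandwiches these, in the $F$-pure setting, between translates of sums of $F$-orbits and their Zariski closures. The remaining pieces feed into states of strictly smaller dimension, where induction applies.
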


Here, we are identifying $G$ and $X$ with their points in a sufficiently large algebraically closed field that serves as a universal domain for algebraic geometry in characteristic~$p$.
Also, see Definition~\ref{cfgfset} below for a precise explanation of what we mean by a ``translate of a sum of $F$-orbits", and for a comparison with the exact formulation in~\cite{fsets}.

Many classical diophantine problems can be realised as special cases of the Mordell-Lang theorem.  For example, a special case of the Skolem-Mahler-Lech theorem (see \cite{Everest}) -- asserting that the zero set of a simple linearly recurrent sequence over a characteristic zero field is a finite union of arithmetic progressions along with a finite set -- can be obtained from Faltings' theorem by taking the semiabelian variety to be $\mathbb{G}_m^d$ for some $d\ge 1$ and $\Gamma$ to be an infinite cyclic subgroup.
More generally, theorems on $S$-unit equations can be cast in this framework as well.

In light of these classical diophantine connections, it is very natural to ask whether an effective version of the Mordell-Lang theorem exists.  Questions of effectivity and decidability within the context of diophantine problems enjoy a long history and it is often the case that even basic questions of this nature are very difficult (see, for example, \cite{CMP87}). For example, Skolem's problem (see \cite{OW}), which asks whether one can decide whether an integer-valued linearly recurrent sequence takes the value zero, is still open.  This simple question can be recast in a general Mordell-Lang framework as asking whether one can decide if certain cyclic subgroups of commutative affine algebraic groups intersect certain hypersurfaces non-trivially.  When one goes beyond the cyclic case, questions of this nature are known to be undecidable.  For example, the solution to Hilbert's tenth problem (see \cite{DMR, Mat}) shows that if $\Gamma=\mathbb{Z}^d\subseteq \mathbb{C}^d$ then it is undecidable whether $\Gamma$ intersects a hypersurface non-trivially for $d$ sufficiently large.   On the other hand, in positive characteristic there has been a lot of recent work dealing with decidable phenomena of this type and many positive results have been obtained; see, for example, \cite{derksen, AB12, DM1, DM2, DM3}.

In this paper we show that in the isotrivial case one indeed has an effective version of a general Mordell-Lang theorem.

\begin{customthm}{A}
\label{A}
Let $G$ be a commutative algebraic group defined over a finite field $\mathbb F_q$, let $F:G\to G$ be the $q$-power Frobenius, let $X\subseteq G$ be a closed subvariety defined over a field extension of $\mathbb F_q$, and let $\Gamma\leq G$ be a finitely generated $\zf$-submodule. Given presentations of $G$ and $X$, along with a finite list of generators for $\Gamma$ as a $\zf$-submodule, there is an effective procedure for determining $X\cap\Gamma$.
\end{customthm}

Our approach is not to make effective the proof of Theorem~\ref{iml-semiabelian-intro} from~\cite{fsets}, but rather to establish a new finiteness statement (Propoisition~\ref{bound}, below), effectively, and then use that to explicitly construct, from the given data, a finite automaton that recognises $X\cap\Gamma$.
This is done in Sections~\ref{sect-ewss}--\ref{sect-automaton} below (see Theorem~\ref{effective-iml} and the discussion following it in~$\S$\ref{subsect-effectiveML}).

A word about how the theory of finite automata entered the picture.
Inspired by Dersken's~\cite{derksen} proof of the Skolem-Mahler-Lech theorem in positive characteristic (whose non-effective version is itself a very special case of Theorem~\ref{iml-semiabelian-intro}), the first and third authors developed in~\cite{fsets-SML} a theory of automatic sets that applies to the general isotrivial Mordell-Lang setting.
Recall that, classically, a subset $S$ of the integers is said to be $d$-automatic, for some positive integer $d$, if there is a finite automaton which recognises precisely the set of base-$d$ expansions of the elements of $S$.
In~\cite{fsets-SML} a generalisation is formulated where the integers are replaced by an arbitrary abelian group (e.g., an isotrivial commutative algebraic group), and the positive integer $d$ (or rather multiplication-by-$d$) is replaced by an arbitrary injective endomorphism of the abelian group (e.g., the Frobenius).
This gives rise to a well-behaved theory of {\em $F$-automatic} subsets of isotrivial commutative algebraic groups.
Details on $F$-automatic sets are both reviewed and appropriately generalised (from finitely generated groups to finitely generated $\zf$-modules) in Section~\ref{sect-auto} below.
Theorem~\ref{A} is proved by showing that $X\cap\Gamma$ is effectively $F$-automatic.

As an application of Theorem~\ref{A} we focus on the central concern of the Mordell-Lang problem:
the set of rational points on a subvariety of an abelian variety.

\begin{corollary}
\label{cor1}
Suppose $G$ is an abelian variety over $\mathbb F_q$ and $X\subseteq G$ is a closed subvariety over a function field extension~$K$.
Given presentations of $G,X$, and~$K$, there is an effective procedure for deciding the following problems:
\begin{enumerate}
\item
Is $X(K)$ nonempty?
\item
Is $X(K)$ infinite?
\item
Does $X(K)$ contain a coset of an infinite subgroup of $G$?
\end{enumerate}
\end{corollary}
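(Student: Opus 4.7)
The plan is to instantiate Theorem~\ref{A} with $\Gamma = G(K)$. For this I first need that $G(K)$ is a finitely generated $\zf$-submodule of $G$ and that a finite list of $\zf$-generators can be computed effectively. Since $G$ is defined over $\Fq$, the Frobenius $F$ preserves $G(K)$. Writing $\ell := K \cap \Fq^{\alg}$, which is some finite field $\mathbb F_{q^n}$, the extension $K/\ell$ is regular and finitely generated, and the $K/\ell$-trace of $G_K$ is $G$ itself. Hence the Lang--N\'eron theorem gives that $G(K)/G(\ell)$ is finitely generated, and since $G(\ell)$ is finite, $G(K)$ is a finitely generated abelian group, a fortiori a finitely generated $\zf$-module. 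Producing explicit generators is an instance of effective Mordell--Weil over function fields in positive characteristic; in the isotrivial setting considered here it can be handled by descent, and we appeal to the literature for the concrete procedure.

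Given such generators, Theorem~\ref{A} outputs a finite automaton recognising $X(K) = X \cap G(K)$, equivalently a finite $F$-set decomposition
\[
X(K) \;=\; \bigcup_{i=1}^{N} \bigl(S_i + \Lambda_i\bigr),
\]
with each $S_i$ a translate of a sum of $F$-orbits and each $\Lambda_i \leq G(K)$ a subgroup, all effectively presented. Problems~(1) and~(2) then follow immediately: $X(K) \neq \emptyset$ iff some $S_i$ is nonempty, and $X(K)$ is infinite iff some $S_i$ involves at least one $F$-orbit factor or some $\Lambda_i$ is infinite. Both conditions are decidable by inspection of the data produced by Theorem~\ref{A} (or, equivalently, by standard emptiness/infiniteness tests for the recognising automaton).

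For problem~(3), the plan is to show that $X(K)$ contains a coset of an infinite subgroup of $G$ if and only if some $\Lambda_i$ in the decomposition is infinite, again a condition readable from the output. The ``if'' direction is immediate because $s + \Lambda_i \subseteq X(K)$ for any $s \in S_i$. For the converse, suppose $a + H \subseteq X(K)$ with $H \leq G(K)$ infinite; taking Zariski closures yields $a + \overline{H} \subseteq X$ with $\overline{H}$ a positive-dimensional algebraic subgroup of $G$. Combining this geometric input with the structural fact that a translate of a sum of $F$-orbits is Zariski-sparse, and so cannot by itself contain a positive-dimensional coset, one concludes (after a pigeonhole step across the $N$ components) that the infinite coset must be absorbed into a single $S_j + \Lambda_j$ with $\Lambda_j$ infinite.

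The principal obstacle is the very first step, namely effectively computing a finite $\zf$-generating set for $G(K)$---this is the effective Mordell--Weil input in positive characteristic. The remainder consists of extracting decidable properties from the $F$-automaton produced by Theorem~\ref{A}; problem~(3) additionally requires the Zariski-closure and sparseness argument sketched above to pin down the equivalence ``infinite coset in $X(K)$ $\iff$ some $\Lambda_i$ infinite''.
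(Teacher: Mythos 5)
The proposal misreads where the effectivity actually lives. Theorem~\ref{A} produces a finite automaton $\mathcal A$ that recognises $X\cap\Gamma$ via $F^r$-expansions; it does \emph{not} produce the structural decomposition $X(K)=\bigcup_i (S_i+\Lambda_i)$ as explicit data. That decomposition is the content of Theorem~\ref{B}, whose proof in Section~\ref{sect-iml=mod} is not effective, and nothing in the paper (or in your proposal) gives a procedure for converting the automaton into such a decomposition. So the step ``Theorem~\ref{A} outputs ... equivalently a finite $F$-set decomposition ... all effectively presented'' is a genuine gap, and since your treatments of problems~(2) and~(3) read data off the $S_i$ and $\Lambda_i$, those arguments collapse. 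All three decision procedures must be carried out by inspecting the \emph{automaton} directly, as the paper does.

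Even along the automaton route there are two further missing ingredients. For problem~(2), the map $w\mapsto[w]_{F^r}$ need not be injective, so an infinite accepted language does not by itself give infinitely many points of $X(K)$; the paper first intersects with a regular language $\mathcal L_0$ of canonical representatives (Lemma~\ref{unamb}) to obtain $\mathcal L'$ on which the expansion map is a bijection onto $X(K)$, and only then applies the pumping-lemma test. For problem~(3), the paper replaces the weak $F^r$-spanning set coming from Proposition~\ref{explcitiwspan-iml} by an \emph{actual} $F^r$-spanning set, effectively constructed in $\S$\ref{subsect-effectivespan}, because Hawthorne's dichotomy results (Facts~\ref{robust-sparse} and~\ref{nosparsegroup}) on $F$-sparse sets require genuine spanning sets; the correct effective criterion is then that $\mathcal L'$ is sparse (no ``double loops'' in $\mathcal A'$), which is equivalent to $X(K)$ not containing an infinite coset. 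Your Zariski-closure plus pigeonhole sketch is not made precise enough to substitute for this, and in any case presupposes the unavailable decomposition. (Your opening reduction to the Lang--N\'eron theorem and the appeal to Poonen's algorithm for generators of $G(K)$ do match the paper.)
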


Such procedures are given in Section~\ref{sect-decide}.
Besides Theorem~\ref{A}, we make use of an algorithm for producing generators for $G(K)$ coming from the finiteness of the Tate-Shafarevich group for abelian varieties over finite fields~\cite{milne68}.

The decision procedure for problem~(3) of Corollary~\ref{cor1} makes use of the dichotomy between sparse and non-sparse regular languages.
That dichotomy yields the following gap theorem for the growth in the number of rational points of bounded height:

\begin{corollary}
Suppose $G$ is an abelian variety over $\mathbb F_q$ and $X\subseteq G$ is a closed subvariety over a function field extension~$K$.
Consider the N\'eron-Tate canonical height on $G$.
Then the number of points in $X(K)$ of height at most $H$ is either bounded above by $C(\log H)^d$ for some positive constants $C$ and $d$, for $H$ sufficiently large, or it is bounded below by $C'\sqrt{H}$ for some positive constant $C'$.
\end{corollary}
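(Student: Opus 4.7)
The plan is to combine the $F$-automatic description of $X(K)$ produced by Theorem~\ref{A} with the classical dichotomy between sparse and non-sparse regular languages, converting the latter into a height-growth statement via the compatibility of the N\'eron--Tate height with Frobenius.

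Since $G$ is an abelian variety over $\Fq$ and $K$ is a finitely generated extension of $\Fq$, the Lang--N\'eron theorem gives that $\Gamma:=G(K)$ is a finitely generated abelian group, hence in particular a finitely generated $\zf$-submodule of $G(\bar K)$. Theorem~\ref{A} then yields a finite alphabet $\Sigma$, a base-$F$ numeration system on $\Gamma$, and a regular language $L\subseteq\Sigma^*$ whose words are exactly the $F$-expansions of the elements of $X(K)=X\cap\Gamma$. The key analytic ingredient is that Frobenius scales $\hat h$ by $q$: for any symmetric ample line bundle $\mathcal L$ on $G$, $F^*\mathcal L\cong\mathcal L^{\otimes q}$, so $\hat h(F(P))=q\,\hat h(P)$. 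Combined with the triangle inequality for the quadratic form $\hat h$, one obtains a constant $C>0$, depending only on the numeration data, such that every point $P\in\Gamma$ with an $F$-expansion of length at most $n$ satisfies $\hat h(P)\le Cq^n$; equivalently, every $P\in X(K)$ of height at most $H$ is represented by a word of length at most $\log_q(H/C)$.

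Now invoke the sparse versus non-sparse dichotomy for regular languages: either $|L\cap\Sigma^{\le n}|=O(n^d)$ for some integer $d\ge 0$, or $|L\cap\Sigma^{\le n}|\ge c\alpha^n$ for some $c>0$ and $\alpha>1$. In the sparse case, the number of points in $X(K)$ of height at most $H$ is bounded above by the number of words of length at most $\log_q(H/C)$, giving the claimed $C(\log H)^d$ upper bound. In the non-sparse case, we appeal to the structural results on $F$-automatic subsets of $\Gamma$ developed in Section~\ref{sect-auto} and~\cite{fsets-SML}: a non-sparse $F$-automatic subset of $\Gamma$ must contain a coset $P_0+H_0$ of an infinite subgroup $H_0\le\Gamma$. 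Since $\Gamma$ is finitely generated, $H_0$ has rank $r\ge 1$, and the standard lattice-point count for the positive-definite quadratic form $\hat h$ on $H_0\otimes\mathbb R$ produces at least $C'H^{r/2}\ge C'\sqrt H$ points of $P_0+H_0\subseteq X(K)$ of height at most $H$.

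The main obstacle is the non-sparse case, specifically the translation from the language-theoretic notion of non-sparsity to the geometric existence of a positive-rank subgroup coset inside $X(K)$. This rests on the refined correspondence between $F$-automatic sets and $F$-sets (sparse $F$-automatic subsets of $\Gamma$ are precisely the finite unions of sums of $F$-orbits, with no positive-rank subgroup summand); once this correspondence is in hand, the gap theorem follows immediately from the height-length comparison above together with the classical counting estimate for finitely generated subgroups of abelian varieties.
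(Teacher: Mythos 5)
Your overall strategy matches the paper's: pass to the $F$-automatic description of $X(K)$, compare expansion length with N\'eron--Tate height, and then split on the sparse/non-sparse dichotomy for the underlying regular language. However, there is a genuine gap in the height--length comparison, and a secondary imprecision in the non-sparse case.

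The gap: you assert that ``every $P\in\Gamma$ with an $F$-expansion of length at most $n$ satisfies $\hat h(P)\le Cq^n$; \emph{equivalently}, every $P\in X(K)$ of height at most $H$ is represented by a word of length at most $\log_q(H/C)$.'' These two statements are not equivalent; they are the two opposite one-sided bounds on the shortest-expansion length $\ell(P)$ in terms of $\hat h(P)$. The first says $\ell(P)\ge \log_q(\hat h(P)/C)$; what you need for the polylog upper bound in the sparse case is the converse, $\ell(P)\le \log_q(\hat h(P))/\log q + O(1)$, which is a lower bound on height in terms of expansion length. A priori nothing in the definition of a spanning set prevents a point of small height from having only long expansions; the Cauchy--Schwarz/Frobenius-scaling estimate you give controls only the direction ``short expansion $\Rightarrow$ small height.'' The paper proves the needed converse as a separate lemma (Lemma~\ref{lem:k3}), by an inductive ``peeling off a digit'' argument showing that an element of height at most $C_1q^{nr}$ admits an expansion of length at most $n$; without this, the step from ``$\mathcal E$ is sparse'' to ``$\#\{P\in X(K):\hat h(P)\le H\}=O((\log H)^d)$'' does not follow. (You also need to pass to the unambiguous sublanguage $\mathcal E$ rather than the full preimage language $L$ to turn word counts into point counts, which the paper does via Lemma~\ref{unamb} and Fact~\ref{robust-sparse}.)

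The secondary issue: the assertion that ``a non-sparse $F$-automatic subset of $\Gamma$ must contain a coset of an infinite subgroup'' is not a general fact about $F$-automatic sets, and it is not what Section~\ref{sect-auto} or~\cite{fsets-SML} prove. What is actually used is the chain: $X(K)$ contains no infinite coset $\Rightarrow$ by the isotrivial Mordell--Lang theorem (Theorem~\ref{iml}), $X(K)$ is a finite union of $F$-orbit sums $\Rightarrow$ by~\cite{fsets-SML}, $X(K)$ is $F$-sparse. The geometric input of Mordell--Lang is essential here and should be cited explicitly; your parenthetical ``correspondence'' between sparse $F$-automatic sets and $F$-orbit sums implicitly encodes part of this but does not by itself yield the dichotomy.
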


This appears as part of a stronger result (Theorem~\ref{thm:equivalence} below).

Beyond simply determining the intersection $X\cap \Gamma$ in Theorem \ref{A}, we are able to give a general structure theorem that comes naturally from a careful analysis of the $F$-automatic sets we produce.   
In particular, we generalise Theorem~\ref{iml-semiabelian-intro} as follows:

\begin{customthm}{B}
\label{B}
Let $G$ be a commutative algebraic group defined over a finite field $\mathbb F_q$, let $F:G\to G$ be the $q$-power Frobenius, let $X\subseteq G$ be a closed subvariety defined over a field extension of $\mathbb F_q$, and let $\Gamma\leq G$ be a finitely generated $\zf$-submodule.
Then $X\cap\Gamma$ is a finite union of sets of the form $S+\Lambda$ where $S\subseteq\Gamma$ is is a translate of a sum of $F$-orbits and $\Lambda=H\cap\Gamma$ for some $H\leq G$ an algebraic subgroup over a finite field.
\end{customthm}

This appears as Theorem~\ref{iml-module} below. The extension from semiabelian varieties to arbitrary commutative algebraic groups is relatively straightforward, and is covered in Section~\ref{sect-ml}.
But letting $\Gamma$ be finitely generated as a $\zf$-module rather than as a group takes more work and requires new methods.
For example, the case of finitely generated $\zf$-submodules of vector groups was dealt with already by the second author in~\cite[Theorem~2.6]{ghioca}, but it does not seem possible to combine this with Theorem~\ref{iml-semiabelian-intro} to deduce Theorem~\ref{B}.
We instead use the fact that $X\cap \Gamma$ is $F$-automatic (this is Corollary~\ref{iml-fauto-module} below) and then carefully analyse the structure of $F$-automatic subsets of isotrivial commutative algebraic groups in Section~\ref{sect-iml=mod}.  
So this does not rely on, but rather recovers and unifies, the results of~\cite{fsets} and~\cite{ghioca}.
In particular, we get an entirely new, automata-theoretic, proof of Theorem~\ref{iml-semiabelian-intro}. 

Although the results from \cite[Theorem~2.6]{ghioca} and Theorem~\ref{iml-semiabelian-intro} are arguably the two most important instances of Theorem \ref{B}, there are nevertheless interesting diophantine problems that involve interactions between the additive and multiplicative structures of fields and are not covered by either result.
For example, a very special case of our Theorems~\ref{A} and \ref{B} that may be of of independent interest is the following. Given a finitely generated $\zf$-submodule $\Gamma_1\subset \mathbb{G}_a$ and a finitely generated subgroup $\Gamma_2\subset \mathbb{G}_m$, we can consider the intersection $\Gamma_1\cap \Gamma_2$ inside the affine line.
Theorem~\ref{B} applies by considering $G:=\mathbb{G}_a\times \mathbb{G}_m$ and intersecting $\Gamma:=\Gamma_1\times\Gamma_2$ with the diagonal $X$ in $G$.
We thus obtain a description of $\Gamma_1\cap \Gamma_2$.
In fact, as the diagonal is an $F$-invariant curve that is not the translate of an algebraic subgroup, we conclude that $\Gamma_1\cap \Gamma_2$ is a finite union of $F$-orbits.
Moreover, using Theorem~\ref{A}, given generators for $\Gamma_1$ and $\Gamma_2$, we obtain an effective algorithm for determining the points $P_1,\dots, P_\ell$ such that $\Gamma_1\cap\Gamma_2$ is the union of the $F$-orbits of $P_1,\dots, P_\ell$.

Finally, let us note that the work of Derksen \cite{derksen} on linear recurrences and much of the work of Derksen and Masser  \cite{DM1, DM2, DM3} on $S$-unit equations in positive characteristic can be recast in a way that is covered by our Theorem~\ref{A} (see \cite{fsets-SML} for details).
On the other hand, the effective result of Adamczewski and the first author \cite[Theorem 4.1]{AB12} is not obviously covered by our work due to the fact that we must work inside a commutative algebraic group; the ineffective version, however, can be deduced from Theorem \ref{B}.

\subsection*{Acknowledgements} We thank Bjorn Poonen and Felipe Voloch for many useful remarks. We are also grateful to the anonymous referee for their useful comments and suggestions, which improved our paper.

\bigskip
\section{Mordell-Lang for finitely generated subgroups of isotrivial commutative algebraic groups}
\label{sect-ml}

\noindent
It does not seem to have been observed before, though it follows rather readily from combining Theorem~\ref{iml-semiabelian-intro} with known structure theorems about algebraic groups, that the conclusions in fact hold for {\em all} commutative algebraic groups over finite fields, and not just semiabelian varieties.
This is a very different situation than the Mordell-Lang theorem in characteristic zero, which fails, for example, in any vector group of dimension greater than $1$ (vector groups do not pose a problem here because in positive characteristic their finitely generated subgroups are all finite). For a more detailed discussion of the Mordell-Lang problem in characteristic $0$ for commutative algebraic groups, we refer the reader to \cite{G-MRL}. 

We record the aforementioned generalisation in this section for the sake of completeness. But first, let us make precise what is meant by ``translates of sums of $F$-orbits".
The following notation and terminology will be used throughout the paper.

\begin{definition}
\label{cfgfset}
Suppose $M$ is an abelian group equipped with an endomorphism $F:M\to M$.
Then by an {\em $F$-orbit} we mean a set of the form
$$S(a;\delta):=\left\{F^{n\delta}a:n<\omega\right\}$$
where $a\in M$ and $\delta$ is a fixed positive integer.
That is, it is the orbit of an element of $M$ under the iterates of a fixed power of $F$.
We will denote the set-sum of $F$-orbits as follows:
$$S(a_1,\dots,a_r;\delta_1,\dots,\delta_r) :=S(a_1;\delta_1)+S(a_2;\delta_2)+\cdots+S(a_r;\delta_r).$$
Finally, we denote by $\mathcal S(M,F)$ the collection of all {\em translates of sums of $F$-orbits}; i.e., of subsets of $M$ of the form $a+S(a_1,\dots,a_r;\delta_1,\dots,\delta_r)$ where $a,a_1,\dots,a_r\in~M$ and $\delta_1,\dots,\delta_r$ are positive integers.
\end{definition}

\begin{remark}
In~\cite{fsets}, finite unions of sets from $\mathcal S(M,F)$ were called ``cycle-free groupless $F$-sets" and the collection of such were denote by $\orb_M$. We will not use this terminology here.
\end{remark}

So, in the conclusion of Theorem~\ref{iml-semiabelian-intro}, when we say that ``$S\subseteq\Gamma$ is a translate of a sum of $F$-orbits" we mean that $S$ is in $\mathcal S(G,F)$.
This description of $X\cap\Gamma$ differs on the face of it from the original in two ways.
First of all, Theorem~7.8 of~\cite{fsets} is stated in terms of ``$F$-cycles" rather than $F$-orbits, but it is explained there (in Lemma~7.1 and the paragraph following the proof of the Theorem~7.8, of that paper) how one can rephrase it in terms of $F$-orbits, and this formulation is more suitable for our purposes.
Secondly, the assumption is made in~\cite[Theorem~7.8]{fsets} that $\Gamma\leq G(K)$ where $K$ is a regular function field extension of $\mathbb F_q$.
But, as explained in~\cite[Remark~7.11]{fsets},  it is not hard to see that one can always attain this situation at the expense of replacing $q$ by $q^r$ and $F$ by $F^r$, for an appropriate choice of $r>0$.
Since every $F^r$-orbit is an $F$-orbit this does not take us out of $\mathcal S(G,F)$.

Here is the promised generalisation to arbitrary commutative algebraic groups.

\begin{theorem}
\label{iml}
Let $G$ be any commutative algebraic group defined over a finite field $\mathbb F_q$, let $F:G\to G$ be the $q$-power Frobenius, let $X\subseteq G$ be a closed subvariety defined over a field extension of $\mathbb F_q$, and let $\Gamma\leq G$ be a finitely generated subgroup which is also invariant under $F$.  
Then $X\cap\Gamma$ is a finite union of sets of the form $S+\Lambda$ where $S\subseteq\Gamma$ is in $\mathcal S(G,F)$ and $\Lambda=H\cap\Gamma$ for some algebraic subgroup $H\leq G$ over a finite field.
\end{theorem}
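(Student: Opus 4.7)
The plan is to reduce to the semiabelian case (Theorem~\ref{iml-semiabelian-intro}, with Ghioca's extension removing the $F$-invariance hypothesis on $\Gamma$) by quotienting out the unipotent radical of $G$, and then lifting the resulting description back. First I would handle connectedness: since $G/G^0$ is finite, $\Gamma \cap G^0$ has finite index in $\Gamma$, and writing $\Gamma$ as a finite disjoint union of cosets of $\Gamma \cap G^0$ (and translating $X$ accordingly) reduces to the case where $G$ is connected.

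With $G$ connected, the maximal smooth connected unipotent subgroup $U \leq G$ is defined over $\Fq$ (since $\Fq$ is perfect), and $\pi \colon G \twoheadrightarrow \bar{G} := G/U$ is a semiabelian variety over $\Fq$. The key arithmetic input is that unipotent algebraic groups over $\Fq$ have torsion geometric points (as $U(\overline{\Fq}) = \bigcup_n U(\mathbb{F}_{q^n})$ is a union of finite groups), so $K := \Gamma \cap U$ is a finitely generated torsion group, hence finite. Applying Theorem~\ref{iml-semiabelian-intro} (in Ghioca's form) to $\bar{X} := \overline{\pi(X)}$ and $\bar{\Gamma} := \pi(\Gamma)$ gives $\bar{X} \cap \bar{\Gamma} = \bigcup_i (\bar{S}_i + \bar{\Lambda}_i)$ with $\bar{S}_i \in \mathcal{S}(\bar{G}, F)$ and, by inspection of the semiabelian proof, $\bar{\Lambda}_i = \bar{H}_i \cap \bar{\Gamma}$ for algebraic subgroups $\bar{H}_i \leq \bar{G}$ over finite fields. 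Lifting, I would choose $\Gamma$-preimages for the basepoint and orbit generators of each $\bar{S}_i$ to obtain $S_i \in \mathcal{S}(G, F)$, and set $H_i := \pi^{-1}(\bar{H}_i) \leq G$ (over a finite field, containing $U$) and $\Lambda_i := H_i \cap \Gamma$. A direct computation then yields
\[
\pi^{-1}(\bar{X}) \cap \Gamma \;=\; \bigcup_i (S_i + \Lambda_i),
\]
which is already of the desired form, and settles the theorem when $X$ is $U$-saturated (i.e.\ $X = \pi^{-1}(\bar{X})$).

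The main obstacle is the non-saturated case, $X \subsetneq \pi^{-1}(\bar{X})$. Writing $X \cap \Gamma = \bigcup_i (X \cap (S_i + \Lambda_i))$ and observing that each $S_i + \Lambda_i \subseteq \bigcup_{s \in S_i}(s + H_i)$ sits in a union of cosets of the algebraic subgroup $H_i$, the intersection $X \cap (s + H_i)$ is a closed subvariety of $s + H_i$; after translation by $-s$ this becomes a Mordell-Lang problem for $(X - s) \cap H_i$ inside the smaller algebraic group $H_i$, to which Noetherian induction on $\dim G$ applies. The boundary case where $\bar{H}_i = \bar{G}$ (forcing $\bar{X} = \bar{G}$, so that $X$ is a proper subvariety of $G$ with dense image in $\bar{G}$) is handled by an auxiliary induction on $\dim X$. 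The uniformity needed as $s$ ranges over the (possibly infinite) orbit sum $S_i$ is controlled by the $F$-equivariance of the set-up, so that only finitely many essentially distinct pieces arise in the family $\{(X - s) \cap H_i : s \in S_i\}$.
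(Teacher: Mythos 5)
Your route is genuinely different from the paper's, and it runs into real difficulties. The paper does not quotient by $U$; it uses the stronger structure theorem (Brion, $\S 5.6$) that $G$ is the \emph{almost-direct sum} of its largest connected unipotent subgroup $U$ and its largest semiabelian subvariety $G_0$, i.e.\ $G = U + G_0$ with $U \cap G_0$ finite. After an isogeny reduction to $G = U \times G_0$ (and the observation that the conclusion of the theorem is preserved under images by isogeny over finite fields), the group $\Gamma_0 := \Gamma \cap (\{0\} \times G_0)$ has finite index in $\Gamma$ and lies \emph{literally inside} the semiabelian factor, so one simply writes $\Gamma$ as finitely many cosets $h_i + \Gamma_0$, translates $X$ by $-h_i$, intersects with $\{0\}\times G_0$, and applies the semiabelian theorem. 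There is no lifting problem at all; the semiabelian input does the whole job.

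By contrast, your quotient $\pi\colon G \to G/U$ buys you only a description of $\pi^{-1}(\bar X) \cap \Gamma$, and the passage from there to $X \cap \Gamma$ is where the difficulty actually lives. Two steps of that passage are not established. First, the ``uniformity'' claim — that only finitely many essentially distinct pieces arise in the family $\{(X - s)\cap H_i : s \in S_i\}$ as $s$ ranges over the (typically infinite) orbit-sum $S_i$ — is asserted via ``$F$-equivariance of the set-up'' but not proved; this finiteness of translates modulo stabilizers is essentially the core content of an isotrivial Mordell--Lang statement, not a consequence of it, so it cannot be waved through. Second, your dimension induction on $G$ is blocked whenever $H_i = G$ (which happens exactly when $\bar H_i = \bar G$, since $H_i = \pi^{-1}(\bar H_i)$ contains $U$), and the patch offered — ``$\bar H_i = \bar G$ forces $\bar X = \bar G$, then induct on $\dim X$'' — relies on an unstated density assumption: $\bar\Gamma$ need not be Zariski dense in $\bar G$, so $\bar H_i = \bar G$ does not force $\bar X = \bar G$. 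Both gaps are avoided entirely by the almost-direct-sum route: you should use the complementary subgroup $G_0$ rather than the quotient $G/U$.
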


\begin{proof}
When $G$ is a semiabelian variety this is precisely Theorem~\ref{iml-semiabelian-intro}.
(One takes $H$ to be the Zariski closure of $\Lambda$, which, by rigidity of semiabelian varieties, is an algebraic subgroup over a finite field.) 
It remains therefore to reduce the general case to the case of semiabelian varieties.

First of all, we observe that the desired description of $X\cap\Gamma$ is preserved under images by isogeny.
That is, suppose $G'$ is a commutative algebraic group and $\phi:G'\to G$ is a surjective morphism of algebraic groups with finite kernel, all defined over a finite field.
Supposing the theorem holds of $G'$, we prove it of $G$.
Let $r>0$ be such that $G'$ and $\phi$ are defined over $\mathbb F_{q^r}$, and consider $X':=\phi^{-1}(X)$ and $\Gamma' :=\phi^{-1}(\Gamma)$.
Note that $\Gamma'$ is still a finitely generated group since $\ker\phi$ is finite.
So we have that $\displaystyle X'\cap\Gamma'=\bigcup_{i=1}^\ell S_i'+\Lambda_i'$ where each $S_i'\subseteq\Gamma'$ is in $\mathcal S(G',F^r)$ and each $\Lambda_i'\leq\Gamma'$ is of the form $H_i'\cap\Gamma'$ where $H_i'$ is an algebraic subgroup over $\mathbb F_q^{\alg}$.
Now, one observes that $\phi(S_i')\in\mathcal S(G,F^r)\subseteq\mathcal S(G,F)$ and $\phi(\Lambda_i')=\phi(H_i'\cap\Gamma')=\phi(H_i')\cap\Gamma$, while $\phi(H_i')$ is an algebraic subgroup of $G$ over $\mathbb F_q^{\alg}$.
Hence $\displaystyle X\cap\Gamma=\bigcup_{i=1}^\ell \phi(S_i')+\phi(\Lambda_i')$ has the desired form.

Now, suppose $G$ is a commutative algebraic group over a finite field.
According to~\cite[$\S 5.6$]{brion}, there exists a largest semiabelian subvariety $G_0\subset G$ and a largest connected unipotent algebraic subgroup $U\subset G$, such that $G=U+G_0$ and $U\cap G_0$ is finite.
In particular, the group multiplication map is an isogeny $\phi:U\times G_0\lra G$.
Because of their characteristic properties, both $G_0$ and $U$ are also defined over finite fields.
We may therefore assume that $G=U\times G_0$.
Note that $U$ is of finite exponent; in characteristic $p$ every unipotent commutative algebraic group is a $p$-group.

Since $\Gamma$ is finitely generated, so is its projection on the first factor of $U\times G_0$.
Since $U$ has finite exponent, that projection must be finite.
Letting $\Gamma_0:=\Gamma\cap \left(\{0\}\times G_0\right)$, we obtain that $\Gamma$ is a finite union of cosets of $\Gamma_0$, say,
$\displaystyle \Gamma=\bigcup_{i=1}^\ell \left(h_i + \Gamma_0\right)$.
Therefore
$\displaystyle X\cap\Gamma =\bigcup_{i=1}^\ell \left(h_i + \left((-h_i+X)\cap \Gamma_0\right)\right)$.
Now, for each $i=1,\dots, \ell$, letting $X_i:=(-h_i+X)\cap \left(\{0\}\times G_0\right)$ and applying the semiabelian case to $X_i\cap\Gamma_0$ in $G_0$, we get the desired description for $X\cap\Gamma$.
\end{proof}

\begin{remark}
In the statement of Theorem~\ref{iml}, and indeed throughout this paper, we are implicitly identifying $G$ with the set of its $\mathcal U$-points where $\mathcal U$ is a sufficiently large algebraically closed field that serves as a universal domain for algebraic geometry in characteristic $p$.
In fact, however, the sets $S$ appearing in the conclusion of the theorem can be taken to be in $\mathcal S(G(L),F)$ where $L$ is any algebraically closed field such that $\Gamma\leq G(L)$.
\end{remark}

\begin{remark}
\label{subsec:gen-M-L}
Next, we discuss briefly the case when one drops the requirement that $\Gamma$ is invariant under the Frobenius endomorphism. For arbitrary finitely generated subgroups $\Gamma$ of semiabelian varieties $G$, it is no longer true that the intersection $X(K)\cap\Gamma$ is a finite union of sets of the form $S+H$, where $S\in \mathcal S(G,F)$ and $H$ is a subgroup of $\Gamma$; in fact, the intersection can be quite wild (see \cite[Example~2.3]{G-Y}). The second author was able to prove (see \cite[Theorem~1.9]{G-CMB}) a structure theorem for the intersection $X(K)\cap\Gamma$ when $\Gamma$ is a finitely generated subgroup, no longer $F$-invariant. The building blocks of the structure theorem from \cite{G-CMB} are no longer $F$-orbits, but instead we have sets of the form
$$\left\{a_n\cdot P\colon n\ge 1\right\},$$
where $P$ is some given point in $G$, while $\{a_n\}_{n\ge 1}$ is a linear recurrence sequence of integers with the extra property that the roots of its characteristic equation are distinct algebraic integers of the form $r^m$, where $m$ is a positive integer and $r$ is a root of the equation witnessing the fact that $F$ is integral over $\mathbb{Z}$ inside ${\rm End}(G)$. 
\end{remark}


\bigskip
\section{$F$-automaticity}
\label{sect-auto}
\noindent
Our goal is to give an {\em effective} version of Theorem~\ref{iml}.
Furthermore, we will be able to weaken the assumption that $\Gamma$ is finitely generated as a {\em group} to it being finitely generated as a {\em $\mathbb Z[F]$-module}.
Note that in the original Mordell-Lang context, when $G$ is assumed to be semiabelian, the map $F$ is integral over $\mathbb Z$ and hence every finitely generated $\mathbb Z[F]$-submodule is finitely generated as a group.
This is no longer true for arbitrary commutative algebraic groups, and the generalisation is both natural and significant (see the discussion at the end of the Introduction for an application of the general case).

Our effectivity will come from explicitly describing a finite automaton that recognises the sets $X\cap\Gamma$.
In order to make sense of this we need to review the notion of ``$F$-automaticity" developed by the first and third authors in~\cite{fsets-SML}.
That, as well as the generalisation of the relevant results of~\cite{fsets-SML} from finitely generated $F$-invariant groups to finitely generated $\mathbb Z[F]$-modules, are the goals of this section.

\begin{definition}[Expansions]
\label{defnfexp}
Suppose $M$ is an abelian group, $F:M\to M$ is an injective endomorphism, and $\Sigma\subseteq M$ is finite.
Given a word $w=x_0x_1\cdots x_m\in\Sigma^*$ we set
$$[w]_F:=x_{0}+Fx_{1}+\cdots +F^mx_{m}\ \in\ M$$
and call this the {\em $F$-expansion of $w$}.
Given $\mathcal L\subseteq\Sigma^*$ we denote by $[\mathcal L]_F$ the set of $F$-expansions of the words in $\mathcal L$.
That is,
$[\mathcal L]_F:=\{[w]_F:w\in\mathcal L\}$.
\end{definition}

\begin{definition}[Spanning sets]
\label{defnspan}
Suppose $M$ is an abelian group and $F:M\to M$ is an injective endomorphism.
By an {\em $F$-spanning set for $M$} we will mean a finite subset $\Sigma\subseteq M$ satisfying the following properties:\begin{itemize}
\item[(i)] $[\Sigma^*]_F=M$,
\item[(ii)] $\Sigma$ contains $0$ and is symmetric (i.e., if $x\in\Sigma$ then $-x\in\Sigma$),
\item[(iii)] for all $x_1,\dots,x_5\in \Sigma $ there exist $t,t'\in \Sigma $ such that $x_1+\cdots+x_5=t+Ft'$, and
\item[(iv)] If $x_1,x_2,x_3\in\Sigma$ and $x_1+x_2+x_3\in F(M)$, then there exists $t\in \Sigma$ such that $x_1+x_2+x_3=Ft$.
\end{itemize}
If $\Sigma$ satisfies all but property~(iv) then we will say it is a {\em weak} spanning set.
\end{definition}

\begin{remark}
The above deifnition differs slightly from~\cite[Definition~5.1]{fsets-SML} where injectivity of $F$ was not assumed but an additional condition --  which follows from injectivity together with our~(iv) above -- appears.
\end{remark}

The key property here is~(i) which says that every element of $M$ has an $F$-expansion using $\Sigma$ as ``digits".
Note that we do not ask for this expansion to be unique.
Conditions~(ii) and~(iii) are technically useful and can in practice always be attained by expanding $\Sigma$.
Condition~(iv) is a strong form of ``$F$-purity" of $\Sigma$ in~$M$, and while much of the basics can be done without it, the full development of $F$-automaticity does require it.

\begin{definition}[Automaticity]
Suppose $M$ is an abelian group and $F$ is an injective endomorphism of $M$ such that $M$ admits an $F^r$-spanning set for some $r>0$.
A subset $S\subseteq M$ is defined to be {\em $F$-automatic} if for some $r>0$ and some $F^r$-spanning set $\Sigma$, the set of words $\{w\in\Sigma^*:[w]_{F^r}\in S\}$ is a regular language (see \cite[Chapter 4]{shallit}).
In other words, there is a finite automaton $\mathcal A$ which takes as inputs finite words on the alphabet $\Sigma$, which it reads left to right, such that a word $x_0x_1\cdots x_m$ is accepted by $\mathcal A$ if and only if $x_{0}+F^rx_{1}+\cdots +F^{mr}x_{m}\in S$.
\end{definition}
 
 \begin{remark}
 \label{remfauto}
\begin{itemize}
\item[(a)]
Already in~\cite[Proposition~6.3]{fsets-SML} it is shown that this notion does not depend on the choice of $F^r$-spanning set $\Sigma$.
But in fact, as Hawthorne later observed in~\cite[Proposition~2.6]{hawthorne}, it does not depend on~$r$ either.
That is, if $S$ is $F$-automatic then for any $r>0$ and any $\Sigma$ an $F^r$-spanning set, $\{w\in\Sigma^*:[w]_{F^r}\in S\}$ is regular.
\item[(b)]
If $M$ has an $F^r$-spanning set then it has an $F^{rk}$-spanning set for every $k>0$, this is~\cite[Lemma~5.7]{fsets-SML}.
It follows that a subset is $F$-automatic if and only if it is $F^\ell$-automatic for some, equivalently for all, $\ell>0$.
\end{itemize}
\end{remark}

We can only discuss $F$-automaticity in $M$ if $M$ admits an $F^r$-spanning set for some $r>0$ in the first place.
While this is not always the case, it is shown in~\cite{fsets-SML} to be so in the context of Theorem~\ref{iml}.

\begin{fact}[Bell-Moosa~\cite{fsets-SML}]
\label{spanning-sml}
Suppose $G$ is a commutative algebraic group over~$\mathbb F_q$ and $F:G\to G$ is the endomorphism induced by the $q$-power Frobenius map.
Fix $K$ a function field over $\mathbb F_q$.
Suppose $\Gamma\leq G(K)$ is a subgroup that is preserved by~$F$ and such that $\Gamma/F(\Gamma)$ is finite.
Then $\Gamma$ has an $F^r$-spanning set for some $r>0$.
\end{fact}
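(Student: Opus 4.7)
The plan is to construct $\Sigma$ in two stages: first secure the spanning property~(i) via a height-theoretic termination argument for the greedy ``base-$F$'' expansion, then enlarge $\Sigma$ and pass to a power of $F$ to enforce the closure properties~(iii) and~(iv); property~(ii) is free since one can always symmetrize and adjoin $0$.

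For the spanning stage, since $\Gamma/F(\Gamma)$ is finite, choose a finite symmetric set of coset representatives $D\subseteq\Gamma$ containing $0$, so every $\gamma\in\Gamma$ admits expansions $\gamma=d_0+Fd_1+\cdots+F^{m-1}d_{m-1}+F^m\gamma_m$ with $d_i\in D$. To force the tails $\gamma_m$ into a finite reservoir, I would equip $\Gamma$ with a height function $\hat h\colon\Gamma\to\mathbb{R}_{\geq 0}$ satisfying $\hat h(F\gamma)=q\hat h(\gamma)+O(1)$, a triangle inequality up to $O(1)$, and a Northcott-type finiteness: $\{\gamma\in\Gamma:\hat h(\gamma)\leq T\}$ is finite for every $T$. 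For isotrivial data, one builds such an $\hat h$ by the structure theorem used in the proof of Theorem~\ref{iml}, reducing to the semiabelian case, where the N\'eron--Tate canonical height works; the unipotent factor contributes only bounded (indeed $p$-torsion) height. From these identities, $\hat h(\gamma_{m+1})\leq(\hat h(\gamma_m)+C)/q$ for $C$ depending only on $D$, so $\hat h(\gamma_m)$ is eventually bounded by a fixed constant, and Northcott confines the tails to a finite set $B$. Adjoining $B\cup(-B)$ to $D$ gives a finite symmetric $\Sigma$ with $[\Sigma^*]_F=\Gamma$.

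Next, pass to $F^r$ for $r$ sufficiently large and enlarge $\Sigma$ to secure~(iii) and~(iv). For~(iii), given $(x_1,\dots,x_5)\in\Sigma^5$, apply greedy base-$F^r$ expansion to $s=x_1+\cdots+x_5$: since $s$ has bounded height and $F^r$ divides heights by $q^r$, one step gives $s=t+F^r t'$ where $t$ is a digit and $\hat h(t')\leq (5\max\hat h(\Sigma)+O(1))/q^r$, which for $r$ large forces $t'$ into a predetermined finite reservoir by Northcott; adjoin the required $t$'s and $t'$'s. For~(iv), if $x_1+x_2+x_3\in F^r(\Gamma)$, the unique $t\in\Gamma$ with $F^r t=x_1+x_2+x_3$ satisfies $\hat h(t)\leq (3\max\hat h(\Sigma)+O(1))/q^r$, again landing in a finite reservoir for $r$ large; adjoin these. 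Iterating these enlargements terminates, since each round only adds elements of a fixed bounded height, and Northcott then bounds the total number of elements added.

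The main obstacle is constructing the height function $\hat h$ with the Northcott-type finiteness in the generality allowed by the hypothesis, where $\Gamma\leq G(K)$ need not be finitely generated as a group and $G$ is an arbitrary commutative algebraic group. Infinite torsion subgroups of $G(K)$ (for instance the $\mathbb F_q^{\alg}$-points of an abelian factor) have $\hat h\equiv 0$, so a naive Northcott inside $G(K)$ fails; the hypothesis that $\Gamma/F(\Gamma)$ is finite is exactly what rules out infinite bounded-height subsets of $\Gamma$ itself, since any such set, expanded in base~$F$, would by the pigeonhole force an infinite fibre over $\Gamma/F(\Gamma)$ whose pairwise differences have shrinking heights and generate torsion, contradicting finiteness modulo~$F$. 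Handling the unipotent part coherently with the semiabelian part, and making this descent precise, is the technical heart of the argument.
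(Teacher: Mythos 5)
Your proposal has the right overall shape --- greedy base-$F$ expansion driven down by a height contraction, Northcott to trap the tails, then passing to $F^r$ to manufacture properties~(iii) and~(iv) --- and this is indeed the mechanism behind Corollary~5.9 of~\cite{fsets-SML}, which the paper simply cites for this Fact (with a remark that the only uses of finite generation in that proof are to embed $\Gamma$ in $G(K)$ and to ensure $\Gamma/F(\Gamma)$ is finite). However, there is a genuine gap in your choice of height. A canonical (N\'eron--Tate-type) height vanishes identically on torsion, and in characteristic $p$ the unipotent part of $G$ is entirely $p$-torsion: for example $\mathbb G_a(K)=K$ is an infinite torsion group. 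So for a $\Gamma$ with nontrivial unipotent part your $\hat h$ is identically zero there and Northcott fails; the tails of the greedy expansion are not confined. You acknowledge this obstruction, but the proposed rescue is wrong: the claim that ``$\Gamma/F(\Gamma)$ finite rules out infinite bounded-height subsets of $\Gamma$'' is refuted by $G=\mathbb G_a$, $K=\mathbb F_q(t)$, and $\Gamma$ the $\mathbb F_p$-span of $\{1,t,t^q,t^{q^2},\dots\}$: this $\Gamma$ is infinite, entirely $p$-torsion (so $\hat h\equiv 0$), yet $\Gamma/F(\Gamma)$ is finite. Also, the failure mode you cite --- $\mathbb F_q^{\alg}$-points of an abelian factor --- does not actually occur inside $G(K)$ for $K$ a function field, since $G(K\cap\mathbb F_q^{\alg})$ is finite; the real culprit is the unipotent factor.

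The fix, and what~\cite{fsets-SML} (and $\S$\ref{subsect-effectivespan} of this paper, in its effective rerun) actually do, is to replace the canonical height with the naive Weil-type height $\height_W$ of Definition~\ref{ht0}, defined on $G(K)\subseteq\mathbb P^n(K)$ from a finite generating $\Fq$-subspace $W$ of a suitable subring of $K$. This height satisfies Northcott unconditionally (bounded-height sets are finite because $W^N$ is finite, independent of any torsion structure on $G$), and it satisfies the one-sided estimates in~(\ref{d1})--(\ref{cz}): sub-multiplicativity of addition and negation, and $\height_W(F(x))\geq D\height_W(x)$ outside a finite exceptional set with $D>1$. These replace your approximate equalities $\hat h(F\gamma)=q\hat h(\gamma)+O(1)$, but they are enough: the contraction $\height_W(\gamma_{m+1})\leq(C\height_W(\gamma_m)+C')/D$ still drives the tails into a bounded, hence finite, set, and your pass-to-$F^r$ enlargement argument for~(iii)--(iv) goes through with $D^r$ playing the role of $q^r$. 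Crucially, no decomposition of $G$ into semiabelian and unipotent pieces is needed for the height itself, because $\height_W$ is defined projectively and ``sees'' the unipotent part as well.
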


This appears as Corollary~5.9 of~\cite{fsets-SML} but for $\Gamma\leq G$ a finitely generated ($F$-invariant) subgroup of $G$.
However, an inspection of the proof given there shows that the only uses of finite-generatedness are to embed $\Gamma\leq G(K)$ for some function field $K$ and to ensure that $\Gamma/F(\Gamma)$ is finite.
(To see that the latter is a consequence of finite-generatedness, note that $F(\Gamma)$ and $\Gamma$ will have the same rank and hence the quotient, being finitely generated, will be finite.)
This justifies our more general formulation, which will be useful when we consider the case of finitely generated $\zf$-modules below.

But first, let us observe that combining Theorem~\ref{iml} with some work in~\cite{fsets-SML}, we obtain an $F$-automaticity result in the context of Mordell-Lang for commutative algebraic groups over finite fields:

\begin{corollary}
\label{iml-auto}
Let $G$ be a commutative algebraic group over a finite field $\mathbb F_q$, let $F:G\to G$ be the $q$-power Frobenius, let $X\subseteq G$ be a closed subvariety defined over a field extension of $\mathbb F_q$, and let $\Gamma\leq G$ be an $F$-invariant finitely generated subgroup.
Then $X\cap\Gamma$ is $F$-automatic in $(\Gamma, F)$.
\end{corollary}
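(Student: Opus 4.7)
The plan is to invoke Theorem~\ref{iml} in order to reduce $X\cap\Gamma$ to a finite union of ``elementary'' sets, and then show each such elementary set is $F$-automatic in $(\Gamma,F)$ by appealing to the closure properties of $F$-automatic sets developed in~\cite{fsets-SML}.

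First I would check that $F$-automaticity in $(\Gamma,F)$ is well-defined. Since $\Gamma$ is $F$-invariant and finitely generated, $F(\Gamma)$ has the same (finite) rank as $\Gamma$ and hence $\Gamma/F(\Gamma)$ is finite; moreover $\Gamma$ lies inside $G(K)$ for some function field $K/\mathbb F_q$. So Fact~\ref{spanning-sml} applies and $\Gamma$ admits an $F^r$-spanning set for some $r>0$.

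Next, Theorem~\ref{iml} gives a decomposition
$$X\cap\Gamma \ =\ \bigcup_{i=1}^\ell \bigl(S_i+\Lambda_i\bigr),$$
with $S_i\in\mathcal S(G,F)$ contained in $\Gamma$, and $\Lambda_i=H_i\cap\Gamma$ for some algebraic subgroup $H_i\leq G$ defined over a finite field. Since regular languages are closed under union, the class of $F$-automatic subsets of $\Gamma$ is closed under finite unions, so it suffices to show each $S_i+\Lambda_i$ is $F$-automatic. Closure of $F$-automatic sets under Minkowski sumsets---a closure property from~\cite{fsets-SML} whose proof leans crucially on condition~(iv) in the definition of an $F$-spanning set---reduces this further to showing that each $S_i$ and each $\Lambda_i$ is $F$-automatic individually.

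Both of these pieces are in~\cite{fsets-SML}. A single $F$-orbit $S(a;\delta)$ is easily seen to be $F$-automatic (after enlarging the alphabet to contain $a$, one writes down a small automaton with a $\delta$-cycle), and closure under translates and sumsets yields $F$-automaticity of every element of $\mathcal S(G,F)$ that lies in $\Gamma$. Separately, the intersection of $\Gamma$ with an algebraic subgroup $H\leq G$ defined over a finite field is $F$-automatic; this also comes from~\cite{fsets-SML}, the point being that after replacing $F$ by a suitable power $F^s$ the set $H\cap\Gamma$ becomes $F^s$-invariant and its recognising language can be cut out by finitely many congruence conditions on $F^s$-expansions. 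Remark~\ref{remfauto} allows these ingredients to be merged around a common power $F^r$ and a common spanning set, so the main obstacle is one of bookkeeping---ensuring the various constructions from~\cite{fsets-SML} can be assembled compatibly---rather than any genuinely new mathematical input.
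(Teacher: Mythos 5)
Your overall strategy matches the paper's: establish that $F$-automaticity makes sense via Fact~\ref{spanning-sml}, apply Theorem~\ref{iml}, and then appeal to the machinery of~\cite{fsets-SML}. Where the paper cites Theorem~6.9 of~\cite{fsets-SML} wholesale (after reorganising $X\cap\Gamma$ so that it visibly satisfies the definition of an ``$F^\ell$-subset''), you propose to unwind that theorem into closure-under-union, closure-under-sumset, automaticity of single orbits, and automaticity of $H\cap\Gamma$. In spirit this is the same source of truth, just repackaged.

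There is, however, a genuine gap in the step handling the sets $S_i\in\mathcal S(G,F)$. Such an $S_i$ has the form $a_0+S(a_1;\delta_1)+\cdots+S(a_r;\delta_r)$ with the base points $a_j$ in $G$, not necessarily in~$\Gamma$; only the \emph{whole sum} is assumed to lie in $\Gamma$. (Taking differences of elements of $S_i$ shows that quantities like $(F^{\delta_j}-1)a_j$ are in $\Gamma$, but the $a_j$ themselves generally are not.) Your argument ``enlarging the alphabet to contain $a$'' therefore produces a spanning set for a module strictly larger than $\Gamma$, and what you obtain is $F$-automaticity in that larger module, which is not the same notion as $F$-automaticity in $(\Gamma,F)$; by definition the latter requires the alphabet to be an $F^r$-spanning set \emph{of $\Gamma$}. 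There is no automatic descent of $F$-automaticity to a submodule, so the reduction to single-orbit pieces breaks down before your closure properties can be applied. The paper sidesteps this exactly by invoking Lemma~2.9 of~\cite{fsets}: any set in $\mathcal S(G,F)$ that happens to be contained in $\Gamma$ is a finite union of translates of sums of $F$-\emph{cycles} $C(a;\delta)$ with $a\in\Gamma$, i.e.\ it is internal to $\Gamma$. That translation from orbits to cycles is the missing ingredient; once it is in place (and once the $\Lambda_i$ are observed to be $F^\ell$-invariant for a common $\ell$), $X\cap\Gamma$ becomes an $F^\ell$-subset of $\Gamma$ in the precise sense of~\cite{fsets-SML}, and Theorem~6.9 there applies directly.
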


\begin{proof}
This is basically Theorem~\ref{iml} together with~\cite[Theorem~6.9]{fsets-SML}, but some words of explanation are in order.
First of all, by Fact~\ref{spanning-sml}, $\Gamma$ does admit an $F^r$-spanning set for some $r>0$, so the question of $F$-automaticity makes sense.

Theorem~\ref{iml} tells us that $\displaystyle X\cap\Gamma=\bigcup_{i=1}^m S_i+\Lambda_i$ where each $S_i\subseteq\Gamma$ is in $\mathcal S(G,F)$ and each $\Lambda_i\leq\Gamma$ is of the form $H_i\cap\Gamma$ where $H_i$ is an algebraic subgroup over $\mathbb F_q^{\alg}$.
We need to show that these sets are $F$-automatic.
To do so we apply Theorem~6.9 of~\cite{fsets-SML} which says, under precisely the assumptions of this corollary, that the ``$F$-subsets" of $\Gamma$ are $F$-automatic.
So we must first introduce this additional notion from~\cite{fsets}.

Given any abelian group $M$ together with an endomorphism $F:M\to M$, an {\em $F$-cycle} in $M$ is a set of the form
$$C(a,\delta):=\{a+F^\delta(a)+F^{2\delta}(a)+\cdots+F^{n\delta}(a):n<\omega\},$$
where $a\in M$ and $\delta$ is a positive integer.
An {\em $F$-subset} of $M$ is a finite union of sets of the form $C+\Lambda$ where $\Lambda$ is an $F$-invariant subgroup of $\Gamma$ and $C$ is a translate of a finite sum of $F$-cycles.
The connection between $F$-cycles and $F$-orbits is made in~\cite{fsets}.
First of all, every $F$-orbit is a finite union of translates of $F$-cycles; indeed, it is easily checked that $S(a;\delta)=\{a\}\cup\big(a+C(F^\delta(a)-a;\delta)\big)$.
But more is true: a short combinatorial argument given in Lemma~2.9 of~\cite{fsets} shows that if $S\subseteq M$ and $S\in\mathcal S(M',F)$, where $M'$ is a an extension of $M$ to which $F$ extends, then in fact $S$ is a finite union of translates of sums of $F$-cycles in $M$ itself.

The $S_i$ appearing in our description of $X\cap \Gamma$ are subsets of $\Gamma$ that come from $\mathcal S(G,F)$.
So by the above remarks, they are each a finite union of translates of $F$-cycles in $\Gamma$.
Note, however, that the $\Lambda_i$ appearing in $X\cap\Gamma$ need not be $F$-invariant, so we do not necessarily get that $X\cap\Gamma$ is an $F$-subset.

But the $\Lambda_i$ will be $F^\ell$-invariant for some $\ell>0$ because $\Lambda_i=H_i\cap\Gamma$ and $H_i$ is an algebraic subgroup defined over a finite field.
And the $S_i$ are also finite unions of translates of $F^\ell$-cycles.
This is because, in general, as long as $F^\delta-1$ is not a zero divisor in $\mathbb Z[F]\subseteq\operatorname{End}(M)$ for any positive integer $\delta$, then every $F$-cycle in $M$ is a finite unions of translates of $F^\ell$-cycles (this is also done in~\cite{fsets} but see the explanation in~\cite[Fact~2.3]{fsets-SML}).
To observe that $F^\delta-1$ is not a zero divisor in our context see the proof of Theorem~6.9 of~\cite{fsets-SML}.

In conclusion then, $X\cap\Gamma$ is an $F^\ell$-subset of $\Gamma$ for some $\ell>0$.
So we get by ~\cite[Theorem~6.9]{fsets-SML} that $X\cap\Gamma$ is $F^\ell$-automatic in $(\Gamma,F^\ell)$.
By Remark~\ref{remfauto}, it is thus $F$-automatic in $(\Gamma,F)$, as desired. 
\end{proof}

\begin{remark}
The above argument is {\em not} effective.
This is because of its reliance on the isotrivial Mordell-Lang theorem of~\cite{fsets}; we did not construct an automaton that recognises $X\cap\Gamma$.
In Section~\ref{sect-automaton} we will do precisely that.
\end{remark}

As mentioned before, we want to work in the more general setting where $\Gamma$ is a finitely generated $\zf$-submodule of $G$.
But even to make sense of $F$-automaticity in that context, we have to prove that such $\Gamma$ also admit spanning sets.
This is the reason we presented Fact~\ref{spanning-sml} with weaker hypotheses than appear in~\cite{fsets-SML}.
We only need to show that $\Gamma/F(\Gamma)$ is finite.
That is Theorem~\ref{spanning} below, but we need a preliminary proposition.

\begin{proposition}
\label{prop:commutative}
Let $G$ be a commutative algebraic group over $\Fq$, $F:G\to G$ the $q$-power Frobenius, $K$ a finitely generated extension of $\Fq$, and $\Gamma\leq G(K)$ a finitely generated $\bZ[F]$-submodule.
Let $\tilde\Gamma$ be the {\em $F$-pure hull} of $\Gamma$ in $G(K)$. That is,
$$\tilde{\Gamma}:=\{x\in G(K)\colon \text{ there exists }n\geq0\text{ such that }F^n(x)\in \Gamma\}.$$
Then there exists $n_0\geq0$ such that $F^{n_0}(\tilde{\Gamma})\subseteq \Gamma$ for $j=1,2$.
\end{proposition}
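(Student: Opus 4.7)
The plan is to show that $\tilde\Gamma$ is a finitely generated $\zf$-submodule of $G(K)$, from which the conclusion will follow by a standard Noetherian argument. Specifically, since $\zf \cong \bZ[x]$ is Noetherian by the Hilbert basis theorem, $\tilde\Gamma/\Gamma$ will be Noetherian, and the defining property of the $F$-pure hull will give that $F$ acts locally nilpotently on it. The ascending chain $\ker(F^k\colon \tilde\Gamma/\Gamma \to \tilde\Gamma/\Gamma)$ of $\zf$-submodules will then stabilize at some $n_0$ with stable value the entire module, yielding $F^{n_0}\tilde\Gamma \subseteq \Gamma$.

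To prove that $\tilde\Gamma$ is finitely generated, I would apply the structure theorem for commutative algebraic groups from the proof of Theorem~\ref{iml}: up to $\Fq$-isogeny, write $G = U \times G_0$ with $U$ connected commutative unipotent and $G_0$ semiabelian, both defined over $\Fq$. First I would check that the claim is preserved under $\Fq$-isogenies by a pullback-pushforward argument exploiting the finiteness of the kernel of the isogeny.

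For the semiabelian factor $G_0$, the Frobenius satisfies a monic integral equation $F^m + a_{m-1}F^{m-1} + \cdots + a_0 = 0$ in $\operatorname{End}(G_0)$ with $a_0 \neq 0$. Consequently $\zf \subseteq \operatorname{End}(G_0)$ is finitely generated as a $\bZ$-module, so any finitely generated $\zf$-submodule of $G_0(K)$ is finitely generated as an abelian group. Rearranging the integral equation shows that $a_0$ factors through $F$, so $F^n x \in \Gamma$ forces $a_0^n x \in \Gamma$. The $a_0$-division hull of $\Gamma \cap G_0(K)$ inside $G_0(K)$ injects back into $\Gamma \cap G_0(K)$ via multiplication by $a_0^n$, so it has the same $\bZ$-rank and differs from $\Gamma \cap G_0(K)$ only by finite torsion. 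Hence $\tilde\Gamma \cap G_0(K)$ is contained in a finitely generated abelian subgroup, giving finite generation in this case.

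The main obstacle will be the unipotent case, since $U(K)$ is generally not finitely generated as a $\zf$-module. My plan here is to filter $U$ by a composition series with $\mathbb{G}_a$-quotients over $\Fq$ and induct on the length, reducing to $U = \mathbb{G}_a$. For $x \in K$ with $x^{q^n} \in \Gamma$, the uniqueness of the $q^n$-th root in $\overline K$ (in characteristic $p$) together with the $\zf$-module structure of $\Gamma$ should force $x$ into a finitely generated $\zf$-submodule of $K$ constructible from the generators $g_1, \ldots, g_r$ of $\Gamma$ together with a bounded set of their $q^k$-th roots in $K$. Boundedness will come from $K$ being finitely generated over $\Fq$: the algebraic closure of $\Fq$ in $K$ is a finite field, so each $g_j$ admits only finitely many $q^k$-th roots in $K$ as $k$ varies.
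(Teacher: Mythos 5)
Your high-level plan is sound and is in fact a nice reframing of what the paper does: reduce the statement to finite generation of $\tilde\Gamma$ as a $\zf$-module, and then extract $n_0$ from the Noetherianity of $\zf$ applied to the chain $\ker(F^k)$ in $\tilde\Gamma/\Gamma$. That reduction is correct (you do need to note that $\tilde\Gamma$ is itself $F$-stable, which is immediate). The paper's own proof also uses a dévissage into abelian, toral, and $\mathbb{G}_a$ pieces, so the two approaches share the same skeleton. However, there are two gaps in your execution, one minor and one serious.

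The minor gap is in the semiabelian factor. You write that the $a_0$-division hull of $\Gamma \cap G_0(K)$ ``injects back into $\Gamma \cap G_0(K)$ via multiplication by $a_0^n$, so it has the same $\bZ$-rank and differs from $\Gamma \cap G_0(K)$ only by finite torsion.'' But the division hull is the increasing union $\bigcup_n\{x : a_0^nx\in\Gamma\}$, and there is no single $n$ that a priori captures all of it; assuming one does is precisely what you are trying to prove. An increasing union of finitely generated groups of bounded rank need not be finitely generated (e.g.\ $\bZ[1/2]\subset\mathbb{Q}$). The correct fix here is the one the paper uses: for an abelian variety $A(K)$ is finitely generated outright by Lang--N\'eron, and for a torus one lands inside a finitely generated $S$-unit group by Rosen's theorem.

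The serious gap is in the $\mathbb{G}_a$ case, which is where all the real difficulty lies. Your proposal is that $\tilde\Gamma$ is built from the given generators $g_1,\dots,g_r$ together with a bounded set of \emph{their} $q^k$-th roots in $K$, with boundedness coming from the finiteness of the constant field. This is false: the relevant roots are roots of $\bZ$-linear combinations of the generators, not of the generators themselves, and which combinations occur cannot be read off in advance. Concretely, take $K=\Fq(s,u)$ and $\Gamma = \zf\cdot(s^q+u)+\zf\cdot u$. Neither generator admits a $q$-th root in $K$ (since $u^{1/q}\notin K$), yet $s=\bigl((s^q+u)-u\bigr)^{1/q}$ lies in $\tilde\Gamma\setminus\Gamma$. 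So the module you'd construct from the generators and their roots misses $s$. To control such combination-roots one needs some termination mechanism for the process ``replace $\Gamma$ by $\Gamma_1=\{x\in K: F(x)\in\Gamma\}$, then by $\Gamma_2$, etc.''; each step is finitely generated, but there is no obvious bound on how many steps occur. The paper supplies exactly this missing ingredient: it shows via Claim~\ref{claim:sufficient} that one can repeatedly swap a generator $\gamma_{i_1}$ for a root $\delta$ with $F(\delta)=\sum c_i\gamma_i$ without increasing the Weil height of the projective point $[1:\gamma_1:\cdots:\gamma_r]$, and then invokes Northcott's theorem to force termination. Nothing in your sketch plays the role of this height/Northcott argument, and it is the crux of the $\mathbb{G}_a$ case.
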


\begin{proof}
First of all, observe that if we have an exact sequence
$$\xymatrix{
0\ar[r]& G_1\ar[r]& G\ar[r]^\pi& G_2\ar[r]&0}$$
of commutative algebraic groups over $\Fq$ and the result holds of $G_1$ and $G_2$ then it holds of $G$.
Indeed, let $\Gamma_1:=\Gamma\cap G_1$ and $\Gamma_2:=\pi(\Gamma)$.
Then, for each $j=1,2$, $\Gamma_j$ is a finitely generated $\mathbb Z[F]$-submodule of $G_j(K)$, and so suppose $n_j$ is such that $F^{n_j}(\tilde{\Gamma}_j)\subseteq \Gamma_j$.
One checks readily that $F^{n_1+n_2}(\tilde\Gamma)\subseteq\Gamma$.

Now, by Chevalley's theorem, there is a short exact sequence
$$\xymatrix{
0\ar[r]& L \ar[r]& G\ar[r]^{\pi}& A\ar[r]& 0
}$$
over $\Fq$, where $L$ is a linear algebraic group and $A$ is an abelian variety.
By the previous paragraph we have thus reduced the proposition to the case of abelian varieties and commutative linear algebraic groups.

Consider the case when $G=A$ is an abelian variety over $\Fq$.
The Lang-N\'eron theorem~\cite{lang-neron} tells us that $A(K)$, and hence $\tilde{\Gamma}$, is a finitely generated group.
Fixing a finite set of generators, and letting $n$ be large enough that $F^{n}$ takes each of them into $\Gamma$, we see that $F^{n}(\tilde{\Gamma}_2)\subseteq \Gamma_2$, as desired.

We may therefore assume that $G=L$ is a commutative linear algebraic group over $\Fq$.
Then $G$ admits a decomposition series over $\Fq^{\alg}$ where each quotient is isomorphic to either $\mathbb G_a$ or $\mathbb G_m$ (see~\cite[Proposition~17.38]{milne}).
Working with a power of $F$ if necessary, we may assume this decomposition series is over $\Fq$.
Hence, using short exact sequences as in the first paragraph of this proof (and induction on $\dim G$) it remains to prove the proposition in the cases when $G=\mathbb G_m$ and $G=\mathbb G_a$.

Consider therefore the case of $G=\mathbb G_m$.
We claim that, as in the case of abelian varieties, $\tilde\Gamma$ is a finitely generated group, which, as in that case, will suffice.
Let $S$ be the finite set of places $v$ of $K$ such that the generators of $\Gamma$ are not $v$-adic units, and let $E$ be the $S$-unit group of $K$.
Since $\Gamma\leq E$ and $E$ is $F$-pure in $G(K)$, we have that $\tilde\Gamma\leq E$.
But by~\cite{rosen}, $E$ is a finitely generated group.


Finally, it remains only to consider the case when $G=\bG_a$.
Fix generators $\gamma_1,\dots, \gamma_r$ of $\Gamma$. 
Our strategy is described by the following reduction.

\begin{claim}
\label{claim:sufficient}
It suffices to prove that there exists a finitely generated $\bZ[F]$-module $\Lambda$ satisfying the following properties:
\begin{enumerate}
\item[(i)] $\Gamma\subseteq \Lambda\subseteq \tilde{\Gamma}$; and  
\item[(ii)] for some generators $\lambda_1,\dots, \lambda_s$ for $\Lambda$ as a $\bZ[F]$-module, if $\sum_{i=1}^s c_i\lambda_i=F(x)$ for some $c_i\in\mathbb F_p$ and some $x\in\bG_a(K)$ then $x\in \Lambda$.
\end{enumerate}
\end{claim}

\begin{proof}[Proof of Claim~\ref{claim:sufficient}.]
First of all, condition~(i) yields that $\tilde{\Lambda}=\tilde{\Gamma}$, where $\tilde{\Lambda}$ is (as usually) the $F$-pure hull of $\Lambda$ in $\bG_a(K)$.

Now, condition~(ii) from above yields that $\tilde{\Lambda}=\Lambda$. Indeed, pick  $x\in\bG_a(K)$ and also, let $n\geq 0$ be minimal such that $F^{n}(x)\in \Lambda$; furthermore, assume $n>0$ since otherwise we would have that $x\in \Lambda$, as claimed. Now, since $F^n(x)\in \Lambda$ and the $\lambda_1,\dots, \lambda_s$ generate $\Lambda$ as a $\bZ[F]$ module, there exists some $k\geq 0$ and  $\mathbb F_p$-linear combinations of the $\lambda_j$'s, say $\xi_0,\dots, \xi_k$, such that
$$F^n(x)=\xi_0+F(\xi_1)+\cdots +F^k(\xi_k).$$
But then condition~(ii) above yields that $\xi_0=F(\xi_0')$ (note that we assumed $n>0$) for some $\xi_0'\in \Lambda$. So, actually $F^{n-1}(x)\in \Lambda$, which contradicts the minimality of $n$. Therefore, indeed $n=0$ and so, $\tilde{\Lambda}=\Lambda$.

We have that $\Lambda=\tilde{\Gamma}$. But there exists $n_1\geq 0$ such that $F^{n_1}(\lambda_j)\in\Gamma$ for each $j=1,\dots, s$.
Hence $F^{n_1}(\tilde{\Gamma})\subseteq \Gamma$, as desired.
\end{proof}

Now, if $\lambda_i=\gamma_i$ (for $i=1,\dots, r$) were to satisfy properties~(i)-(ii) from Claim~\ref{claim:sufficient}, then we are done already (and in this case, as previously observed, we would actually have that $\Gamma=\tilde{\Gamma}$). So, we assume that condition~(ii) above is not satisfied by $\lambda_i=\gamma_i$ (since clearly condition~(i) is). This means that there exist some $c_1,\dots, c_r\in\mathbb F_p$ and there exists $\delta\in\bG_a(K)\setminus\Gamma$ such that
$$F(\delta)=\delta^q=c_1\gamma_1+\cdots + \cdots c_r\gamma_r.$$
Let $h(\cdot )$ be the Weil height associated to the function field $K/\Fq$ (note that if $K$ is a finite extension of $\Fq$, then the conclusion we seek would be obvious because then $\bG_a(K)$ is a finite set). The above relation yields that for each place $v$ of the function field $K/\Fq$, we would have that 
$$\max\{1,|\delta|_v\}\le  \max\{1,|\gamma_1|_v,\dots, |\gamma_r|_v\}.$$
Let $i_1\in\{1,\dots, r\}$ such that $c_{i_1}\ne 0$. Then we replace the tuple $(\gamma_1,\dots, \gamma_r)$ by $(\lambda_1,\dots, \lambda_r)$ where each $\lambda_i=\gamma_i$ for $i\ne i_1$, while $\lambda_{i_1}:=\delta$. Clearly, the $\bZ[F]$-module $\Lambda$ spanned by $\lambda_1,\dots, \lambda_r$ satisfies condition~(i) from Claim~\ref{claim:sufficient}. If also condition~(ii) were to be satisfied by the generators $\lambda_1,\dots, \lambda_r$ of $\Lambda$, then we would be done. Otherwise, we proceed as before. However, note that each time when we replace a set  $\lambda_1,\dots, \lambda_r$ by another set  $\eta_1,\dots, \eta_r$ (generating a \emph{larger} $\bZ[F]$-submodule of $\tilde{\Gamma}$), the Weil height of the point 
$$P_\lambda:=[1:\lambda_1:\cdots :\lambda_r]\in \mathbb{P}^r(K)$$
does not increase (note that the Weil height of the above point is computed as $\sum_v\log\max\{1,|\lambda_1|_v,\cdots ,|\lambda_r|_v\}$, after a suitable normalisation of the absolute values $v$ of the function field $K/\Fq$). Since Northcott's Theorem yields the finiteness of the number of points of bounded Weil height from $\mathbb{P}^r(K)$, we conclude that after finitely many steps, we no longer produce new tuples $(\lambda_1,\dots, \lambda_r)$ beyond the tuples already produced in our previous steps. Therefore, at some step, we must have the two conditions~(i)-(ii) from Claim~\ref{claim:sufficient} are satisfied.

This concludes our proof of the Proposition~\ref{prop:commutative}.
\end{proof}

We can now prove that in our context spanning sets exist.

\begin{theorem}
\label{spanning}
Suppose $G$ is a commutative algebraic group over a finite field~$\mathbb F_q$ and $F:G\to G$ is the $q$-power Frobenius.
If $\Gamma\leq G$ is a finitely generated $\bZ[F]$-submodule then $\Gamma/F(\Gamma)$ is finite.
In particular, $\Gamma$ admits an $F^r$-spanning set for some $r>0$.
\end{theorem}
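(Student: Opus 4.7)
The plan is to first reduce the theorem to showing that $\Gamma/F(\Gamma)$ is finite, and then to establish the latter by a case analysis using Chevalley's structure theorem. For the reduction: $\Gamma$ is automatically $F$-invariant (being a $\zf$-submodule), and it lies in $G(K)$ for some finitely generated function field $K/\mathbb F_q$ because it has only finitely many $\zf$-generators; hence Fact~\ref{spanning-sml} will deliver the $F^r$-spanning set as soon as $\Gamma/F(\Gamma)$ is known to be finite. I would also observe right away that $\Gamma/F(\Gamma)$ is automatically finitely generated as an abelian group: if $\gamma_1,\dots,\gamma_r$ generate $\Gamma$ over $\zf$, then any element $\sum_i p_i(F)\gamma_i$ of $\Gamma$ reduces modulo $F(\Gamma)$ to $\sum_i p_i(0)\gamma_i$, so $\gamma_1,\dots,\gamma_r$ already generate $\Gamma/F(\Gamma)$ as an abelian group. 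It therefore remains only to prove that $\Gamma/F(\Gamma)$ is torsion.

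The main tool is the following reduction: if $0\to G_1\to G\xrightarrow{\pi}G_2\to 0$ is a short exact sequence of commutative algebraic groups over $\mathbb F_q$ and the theorem holds for $G_1$ and $G_2$, then it holds for $G$. One sets $\Gamma_1:=\Gamma\cap G_1$ and $\Gamma_2:=\pi(\Gamma)$; both are $\zf$-submodules, $\Gamma_2$ is manifestly finitely generated, and $\Gamma_1$ is finitely generated because $\zf$ is a Noetherian ring (being a quotient of $\mathbb Z[T]$). The snake lemma applied to $F$ acting on the short exact sequence then yields right-exactness
\[
\Gamma_1/F(\Gamma_1)\ \lra\ \Gamma/F(\Gamma)\ \lra\ \Gamma_2/F(\Gamma_2)\ \lra\ 0,
\]
so finiteness of the outer terms forces finiteness of the middle. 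With this in hand, Chevalley's theorem reduces to $G=A$ an abelian variety or $G=L$ a commutative linear algebraic group; then a decomposition series of $L$ (available over $\mathbb F_q$ after replacing $F$ by a power, which is harmless since $\Gamma/F^\ell(\Gamma)$ finite implies $\Gamma/F(\Gamma)$ finite because $F^\ell(\Gamma)\subseteq F(\Gamma)$) further reduces to $G\in\{\mathbb G_a,\mathbb G_m\}$.

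It would then remain to settle the three basic cases. For $G=A$, Lang-N\'eron gives that $A(K)$, and hence $\Gamma$, is finitely generated as an abelian group; the characteristic polynomial $P(T)\in\mathbb Z[T]$ of Frobenius satisfies $P(F)=0$ and has nonzero constant term $\pm q^{\dim A}$, and writing $P(T)=T\cdot Q(T)+P(0)$ yields $P(0)\gamma=-F(Q(F)\gamma)\in F(\Gamma)$ for every $\gamma\in\Gamma$, so $\Gamma/F(\Gamma)$ is a finitely generated abelian group killed by $P(0)\ne 0$, hence finite. For $G=\mathbb G_m$, the Frobenius coincides with multiplication-by-$q$, so $\zf$ acts through $\mathbb Z$ and $\Gamma/F(\Gamma)=\Gamma/q\Gamma$ is finite. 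For $G=\mathbb G_a$ we are in characteristic $p$ and so $\zf$ acts through $\mathbb F_p[F]$; hence $\Gamma$ is a finitely generated $\mathbb F_p[F]$-module and $\Gamma/F(\Gamma)$ is a finite-dimensional $\mathbb F_p$-vector space. I expect the main subtlety to lie in the exact-sequence reduction, specifically in invoking Noetherianity of $\zf$ to keep $\Gamma\cap G_1$ finitely generated as a $\zf$-module; without this, the Chevalley decomposition cannot be propagated through the argument and the reduction to the three atomic cases breaks down.
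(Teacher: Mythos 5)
Your proof is correct, and it takes a genuinely different route from the paper's. The paper proves Theorem~\ref{spanning} by first handling the case where $\Gamma$ is $F$-pure in $G(K)$ and $G$ has a convenient Chevalley decomposition (there the key step is showing $[n]G(K)\subseteq F(G(K))$ for some $n$, so that $F$-purity forces $n\Gamma\subseteq F\Gamma$), and then reducing the general case to the $F$-pure case via the $F$-pure hull $\tilde\Gamma$ and Proposition~\ref{prop:commutative}. That proposition is itself substantial: its $\bG_a$ case requires a height/Northcott argument. Your approach bypasses the $F$-pure hull entirely. The snake lemma applied to $F$ acting on $0\to\Gamma\cap G_1\to\Gamma\to\pi(\Gamma)\to 0$ gives right-exactness of the cokernels, and Noetherianity of $\zf$ (as a quotient of $\mathbb Z[T]$) keeps $\Gamma\cap G_1$ finitely generated so the induction can propagate through Chevalley's theorem and a decomposition series of the linear part. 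Your base cases are also cleaner than one might fear: for $\bG_a$ the quotient $\Gamma/F\Gamma$ is trivially a finite-dimensional $\mathbb F_p$-vector space because $\zf$ acts through $\mathbb F_p[F]$; for $\bG_m$ the ring acts through $\mathbb Z$; for abelian varieties Lang--N\'eron plus the nonvanishing constant term of the characteristic polynomial suffices (or, even more simply, $\Gamma$ and $F\Gamma$ have the same rank as finitely generated abelian groups). The trade-off is that the paper's Proposition~\ref{prop:commutative} is needed elsewhere anyway (it is invoked again in Reduction~\ref{fpure} of Section~\ref{sect-iml=mod}), so the authors lose nothing by routing Theorem~\ref{spanning} through it; but as a self-contained proof of Theorem~\ref{spanning} alone, yours is shorter and avoids the hardest input. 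Your opening observation that $\Gamma/F(\Gamma)$ is automatically finitely generated as an abelian group (generated by the images of the $\zf$-generators, since $p_i(F)-p_i(0)$ is divisible by $F$) is the same point the paper makes briefly near the end of its special-case argument; you correctly identified it as essential and stated it up front.
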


\begin{proof}
Let us deal first with the case when the following properties hold:
\begin{enumerate}
\item
there is an exact sequence 
$$
\xymatrix{
0\ar[r]& L \ar[r] &G\ar[r]^\pi &A\ar[r]& 0
}$$
over $\mathbb F_q$, where $L=U\times \bG_m^t$ for some commutative unipotent algebraic group $U$, and $A$ is an abelian variety; and
\item
there is a function field $K$ over $\mathbb F_q$ such that $\Gamma\leq G(K)$ is {\em $F$-pure} in $G(K)$ in the sense that if $x\in G(K)$ is such that $F(x)\in\Gamma$ then $x\in \Gamma$.
\end{enumerate}
Afterwards we will remove these assumptions.

Note, first of all, that for some $s>0$ we have $[q^s](L(K))\subseteq F(L(K))$.
This is because multiplication by some power of $p$ kills the commutative unipotent algebraic group $U$, and on $\mathbb G_m^t$ multiplication by $q$ agrees with $F$.

We claim that there exists $n>0$ such that $[n](G(K))\subseteq F(G(K))$.
Indeed, let $\Lambda:=\pi(G(K))\leq A(K)$.
By the Lang-Neron theorem, $A(K)$, and hence $\Lambda$, is a finitely generated group.
Hence $\Lambda/F(\Lambda)$ is finite.
Let $m>0$ be such that $m\Lambda\subseteq F(\Lambda)$.
We show that $n:=q^sm$ works.
Suppose $x\in G(K)$.
Then
\begin{eqnarray*}
\pi(mx)
&=&
m\pi(x)\\
&=&F(\lambda)\ \ \text{ for some }\lambda\in\Lambda\\
&=&
F(\pi(y))\ \ \text{ for some }y\in G(K)
\end{eqnarray*}
So $mx-F(y)\in L(K)$.
Hence $q^smx-F(q^sy)=F(z)$ for some $z\in L(K)$, so that $q^smx=F(q^sy+z)\in F(G(K))$, as desired.

Now, by the $F$-purity of $\Gamma$ in $G(K)$, this implies that $n\Gamma\subseteq F(\Gamma)$.
Hence $\Gamma/F(\Gamma)$ is $n$-torsion.
But $\Gamma/F(\Gamma)$ is a finitely generated group as $\Gamma$ is a finitely generated $\mathbb Z[F]$-module.
So $\Gamma/F(\Gamma)$ is finite.

Next, we consider the general case; that is, we drop assumptions~(1) and~(2).
By the structure of commutative linear algebraic groups (see~\cite[Theorem~17.17]{milne}), together with Chevalley's theorem, we know that there is an $\ell\geq1$ such that $G$ does satisfy property~(1) over $\mathbb F_{q^\ell}$.
Let $K$ be a function field extension of $\mathbb F_{q^\ell}$ such that $\Gamma\leq G(K)$.
Note that $\Gamma$ is a finitely generated $\bZ[F^\ell]$-submodule of $G(K)$; one can take $\{F^i(\gamma_j):0\leq i<\ell, 1\leq j\leq k\}$ as generators where $\{\gamma_1,\dots,\gamma_k\}$ generate $\Gamma$ as a $\bZ[F]$-module.
Let $\widetilde\Gamma$ be the $F^\ell$-pure hull of $\Gamma$ in $G(K)$.
By Proposition~\ref{prop:commutative} there is an $n_0\geq 0$ such that $F^{\ell n_0}\widetilde\Gamma\leq\Gamma$.
In particular, $\widetilde\Gamma$ is a finitely generated $\bZ[F^\ell]$-submodule of $G(K)$ in which it is $F^\ell$-pure.
That is, $(G, q^\ell, F^\ell, \widetilde\Gamma, K)$ satisfies properties~(1) and~(2).
Hence, by the first part of the proof, we have that $\widetilde\Gamma/F^\ell(\widetilde\Gamma)$ is finite.
Applying the $\bZ[F^\ell]$-isomorphism $F^{\ell n_0}$, we have that $\widetilde\Gamma/F^{\ell(n_0+1)}(\widetilde\Gamma)$ is finite too.
As $F^{\ell(n_0+1)}\widetilde\Gamma\leq F^{\ell}\Gamma\leq F\Gamma\leq\Gamma\leq\widetilde\Gamma$, we have that $\Gamma/F(\Gamma)$ is finite.

The ``in particular" clause follows by Fact~\ref{spanning-sml}.
\end{proof}

\bigskip
\section{Explicit weak spanning sets}
\label{sect-ewss}

\noindent
Theorem~\ref{spanning} does not explicitly construct an $F^r$-spanning set; indeed, we are not aware of an effective procedure for doing so except in the case of $\Gamma=G(K)$, see~$\S$\ref{subsect-effectivespan} below.
In general, we can, however, explicitly construct $(r,\Sigma)$ where $\Sigma$ is a {\em weak} $F^r$-spanning set.
Recall that this means $\Sigma$ satisfies all but property~(iv) of Definition~\ref{defnspan}.
It will turn out that weak spanning sets suffice for giving an effective description of $X\cap\Gamma$ in the isotrivial Mordell-Lang setting.

We begin with a general construction of a weak spanning set.

\begin{lemma}
\label{explicitwspan}
Suppose $(M,F)$ is an abelian group with an injective endomorphism.
Assume there exist positive integers $r,b$ and integers $b_1,\dots,b_r$, satisfying:
\begin{itemize}
\item[(i)]
$\displaystyle bx=\sum_{i=1}^rb_iF^i(x)$ for all $x\in M$, and
\item[(ii)]
$\displaystyle |b_i|<\frac{b}{6r}$ for all $i=1,\dots,r$.
\end{itemize}
Suppose $M$ is generated as a $\zf$-module by $u_1,\dots,u_s$.
Then
\begin{equation}
\label{sigmaform}
\Sigma:=\left\{\sum_{i=1}^s\sum_{j=0}^{r-1}a_{i,j}F^j(u_i):a_{i,j}\in\mathbb Z, 0\leq|a_{i,j}|<6b\right\}
\end{equation}
is a weak $F^r$-spanning set for $M$.
\end{lemma}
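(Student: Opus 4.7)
The plan is to verify, in turn, properties~(i), (ii), (iii) of Definition~\ref{defnspan}; property~(iv) is not needed since $\Sigma$ is only claimed to be a \emph{weak} spanning set. Property~(ii) is immediate from the form of $\Sigma$: taking all $a_{i,j}=0$ gives $0\in\Sigma$, and $\Sigma=-\Sigma$ because the constraint $|a_{i,j}|<6b$ is negation-invariant. Both (i) and (iii) rest on applying hypothesis~(i) to each generator, giving the \emph{carrying identity}
$$
b\,u_i \;=\; \sum_{\ell=1}^{r} b_\ell\, F^\ell(u_i).
$$
Intuitively, this rule lets us trade a coefficient of magnitude $b$ on $F^k(u_i)$ for coefficients of magnitudes $|b_\ell|<b/(6r)$ on $F^{k+\ell}(u_i)$ for $\ell=1,\dots,r$; hypothesis~(ii) makes this a contraction.

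For property~(iii), I would write $y:=x_1+\cdots+x_5=\sum_{i,j}e_{i,j}F^j(u_i)$ with $0\le j<r$ and $|e_{i,j}|<5\cdot 6b=30b$, then apply balanced division $e_{i,j}=a_{i,j}+b\,q_{i,j}$ with $|a_{i,j}|\le b/2$, forcing $|q_{i,j}|\le 30$. Substituting the carrying identity and sorting the resulting $F^{j+\ell}(u_i)$-terms according to whether $j+\ell<r$ or $j+\ell\ge r$ produces an expression $y=t+F^r t'$ in which each coefficient of $t$ or $t'$ is bounded by
$$
\tfrac{b}{2} + 30\sum_{\ell=1}^{r}|b_\ell| \;<\; \tfrac{b}{2} + 30\cdot \tfrac{b}{6} \;=\; \tfrac{11b}{2} \;<\; 6b,
$$
so that $t,t'\in\Sigma$.

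For property~(i) I would iterate the same carrying step as an algorithm. Given $x\in M$, use $\zf$-generation to write $x=\sum_{i}\sum_{k\ge 0}c_{i,k}F^k(u_i)$ with integer $c_{i,k}$ of finite support. Sweep through positions $k=0,1,2,\dots$ in order: for each $i$, balanced-divide $c_{i,k}=a_{i,k}+b\,q_{i,k}$ (so $q_{i,k}=0$ whenever $|c_{i,k}|\le b/2$), overwrite $c_{i,k}$ by $a_{i,k}$, and add $q_{i,k}\,b_\ell$ to $c_{i,k+\ell}$ for $\ell=1,\dots,r$. The carrying identity guarantees that $x$ is preserved throughout. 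Once the sweep halts, $x=\sum_{i,k}a_{i,k}F^k(u_i)$ with $|a_{i,k}|\le b/2$ and finite support; then for $\sigma_\ell:=\sum_i\sum_{j=0}^{r-1}a_{i,r\ell+j}F^j(u_i)$ one has $\sigma_\ell\in\Sigma$ and $x=\sum_\ell F^{r\ell}\sigma_\ell$, as required.

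The main obstacle will be termination. Let $m_k$ denote the maximum over $i$ of the magnitude of the current coefficient at position $k$ just before position $k$ is processed. Using $|q|\le |c|/b+1/2$ together with $\sum_\ell|b_\ell|/b<1/6$, one obtains, once $k$ exceeds the support of the initial $c_{i,k}$, the contractive recursion
$$
m_k \;\le\; \tfrac{1}{6}\max_{k-r\le k'<k}m_{k'} \;+\; \tfrac{b}{12}.
$$
The associated one-dimensional map $M\mapsto M/6+b/12$ has the attracting fixed point $b/10<b/2$, so a sliding-window argument shows that after finitely many positions $m_{k'}\le b/2$ holds for $r$ consecutive $k'$. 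Beyond that window every subsequent quotient $q$ vanishes, so the tail coefficients are zero and the algorithm terminates in finitely many steps.
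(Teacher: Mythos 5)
Your proposal is correct. Properties (ii) and (iii) are handled essentially as in the paper: (ii) is immediate, and for (iii) both you and the paper apply a single carrying step using the identity $bu_i=\sum_\ell b_\ell F^\ell(u_i)$ to the five-fold sum and bound the resulting coefficients below $6b$. (Your use of balanced division gives a marginally tighter constant $11b/2$ where the paper gets a bound just under $6b$; either suffices.) The genuine difference is in property (i). The paper runs a descent argument: pick an $x$ with no $F^r$-expansion, choose a representation $\sum c_{i,j}F^j(u_i)$ minimizing $N:=\max|c_{i,j}|$, observe $N\ge 6b$ (since $N<6b$ yields an expansion by simple grouping of $r$ consecutive positions), then one global carrying step produces a representation of $x$ with max coefficient at most $N/3<N$, contradicting minimality. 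Your proof instead sweeps the positions left to right, performing balanced-division carries, and proves termination by a contraction estimate $m_k<\tfrac{1}{6}\max_{k-r\le k'<k}m_{k'}+\tfrac{b}{12}$, whose iterate drives the sliding-window maximum below $b/2$; once $r$ consecutive positions have no carry, the tail is zero. Both use the same carry identity and both are correct; the paper's descent is shorter and avoids a termination analysis, while your sweep is more explicitly algorithmic (a modest fit for a paper about effectivity), at the cost of the more delicate termination estimate, which you should spell out a little more carefully than ``a sliding-window argument shows'' (e.g.\ argue that once $M_k>b/10$ the window maximum strictly decreases within every $r$ steps, hence, being a non-negative integer, it eventually falls below $b/2$).
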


\begin{proof}
Note that every element of $M$ is of the form $\sum_{i=1}^s\sum_{j=0}^{\ell_i}c_{i,j}F^j(u_i)$ and that if each $|c_{i,j}|<6b$ then this has an $F^r$-expansion with digits in $\Sigma$.
So, toward a contradiction, suppose that there is an element
$x=\sum_{i=1}^s\sum_{j=0}^{\ell_i}c_{i,j}F^j(u_i)\in M$
that does not have an $F^r$-expansion with digits in $\Sigma$ and such that $N:=\max_{i,j}|c_{i,j}|$ is least such.
So $N\geq 6b$.
Dividing each $c_{i,j}$ by $b$ we write
$c_{i,j}=bc_{i,j}'+r_{i,j}$ where $|r_{i,j}|<b$.
Then $x= y+z$ where $y=\sum_{i=1}^s\sum_{j=0}^{\ell_i}c_{i,j}'F^j(bu_i)$ and $z=\sum_{i=1}^s\sum_{j=0}^{\ell_i}r_{i,j}F^j(u_i)$.
Using~(i) we get
\begin{eqnarray*}
y
&=&
\sum_{i=1}^s\sum_{j=0}^{\ell_i}\sum_{k=1}^rc_{i,j}'b_kF^{j+k}(u_i)\\
&=&
\sum_{i=1}^s\sum_{\ell=0}^{\ell_i+r}d_{i,\ell}F^j(u_i)
\end{eqnarray*}
where $\displaystyle d_{i,\ell}:=\sum_{j+k=\ell}c_{i,j}'b_k$.
But by~(ii) and the fact that $|c_{i,j}'|\leq \frac{N}{b}$, we have 
$$|d_{i,\ell}|
\leq
r\frac{N}{b}\frac{b}{6r} = \frac{N}{6}.$$
Since the coefficients of $z$ also satisfy $|r_{i,j}|<b\leq\frac{N}{6}$, we have that
$$x=y+z=\sum_{i=1}^s\sum_{j=0}^{L_i}e_{i,j}F^j(u_i)$$
where $e_{i,j}\leq\frac{N}{6}+\frac{N}{6}<N$ contradicting the minimal choice of $N$.

Next we show if $x_1,\dots,x_5\in\Sigma$ then $x:=x_1+\cdots+x_5=t+F^rt'$ for some $t,t'\in\Sigma$.
We can write $x=\sum_{i=1}^s\sum_{j=0}^{r-1}a_{i,j}F^j(u_i)$ with each $|a_{i,j}|<30b$.
As before, we divide by $b$, writing $a_{i,j}=ba_{i,j}'+r_{i,j}$ with $|r_{i,j}|<b$.
Hence $x=y+z$ where $z:=\sum_{i=1}^s\sum_{j=0}^{r-1}r_{i,j}F^j(u_i)$ and
\begin{eqnarray*}
y
&:=&
\sum_{i=1}^s\sum_{j=0}^{r-1}a_{i,j}'F^j(bu_i)\\
&=&
\sum_{i=1}^s\sum_{j=0}^{r-1}\sum_{k=1}^ra_{i,j}'b_kF^{j+k}(u_i)\\
&=&
\sum_{i=1}^s\sum_{\ell=0}^{2r-1}d_{i,\ell}F^j(u_i)
\end{eqnarray*}
where $\displaystyle d_{i,\ell}:=\sum_{j+k=\ell}a_{i,j}'b_k$, and so $|d_{i,\ell}|
<
r30\frac{b}{6r} = 5b$.
We have that
$$x=\sum_{i=1}^s\sum_{j=0}^{r-1}(r_{i,j}+d_{i,j})F^j(u_i)+F^r\left(\sum_{i=1}^s\sum_{j=0}^{r-1}d_{i,j}F^j(u_i)\right)$$
and all coefficients are bounded by $6b$.
So, $t:=\sum_{i=1}^s\sum_{j=0}^{r-1}(r_{i,j}+d_{i,j})F^j(u_i)$ and $t':=\sum_{i=1}^s\sum_{j=0}^{r-1}d_{i,j}F^j(u_i)$ are in $\Sigma$ and $x=t+F^rt'$, as desired.

Finally, it is clear that $\Sigma$ contains $0$ and is symmetric.
Hence $\Sigma$ satisfies all but property~(iv) of Definition~\ref{defnspan}, and is thus a weak $F^r$-spanning set.
\end{proof}

We can now effectively construct weak spanning sets in the isotrivial Mordell-Lang setting:

\begin{proposition}
\label{explcitiwspan-iml}
Suppose $G$ is a commutative algebraic group over~$\mathbb F_q$, presented to us as a Zariski open subset of a Zariski closed subset of~$\mathbb P^n$.
Let $F:G\to G$ be the $q$-power Frobenius.
Then conditions~$(i)$ and~$(ii)$ of Lemma~\ref{explicitwspan} hold of $M=G$ with an effective choice of $b$ and $r$ as a function of $n$.
In particular, if $\Gamma\leq G$ is a $\zf$-submodule generated by $u_1,\dots,u_s$ then formula~$(\ref{sigmaform})$ of that lemma gives an explicit weak $F^r$-spanning set for $\Gamma$.
\end{proposition}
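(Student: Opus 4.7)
The plan is to construct, effectively from the presentation of $G\subseteq\mathbb{P}^n$, a polynomial $P\in\bZ[T]$ of degree $d\leq 2n$ with nonzero constant term $c_0$ and a positive integer $K$ (also effective in $n$), such that $P(F^K)=0$ in $\End(G)$ and $|c_i|<|c_0|/(6Kd)$ for all $i=1,\dots,d$. Given such $P$ and $K$, the identity $|c_0|\cdot x=-\operatorname{sgn}(c_0)\sum_{i=1}^d c_i F^{Ki}(x)$ provides the relation required by condition~(i) of Lemma~\ref{explicitwspan}: set $b:=|c_0|$, $r:=Kd$, $b_{Ki}:=-\operatorname{sgn}(c_0)c_i$, and $b_j:=0$ for the remaining indices. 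Condition~(ii) is then immediate from the bound on $|c_i|$. Since the relation holds in all of $G$, it restricts to the $\zf$-submodule $\Gamma$, so the ``in particular'' clause follows from Lemma~\ref{explicitwspan} applied to $\Gamma$ with the given generators.

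To produce $P$, I would invoke the structure theory of commutative algebraic groups over $\Fq$. By Chevalley's theorem together with~\cite[Proposition~17.38]{milne} (and after replacing $F$ by a power of bounded size so that the relevant decompositions are defined over $\Fq$), $G$ admits a filtration over $\Fq$ whose successive quotients are an abelian variety $A$ of dimension $g_A$, finitely many copies of $\mathbb{G}_m$ (say $g_T$ in total), and finitely many copies of $\mathbb{G}_a$ (say $g_U$), with $g_A+g_T+g_U=\dim G\leq n$. Polynomial relations for $F$ combine multiplicatively across short exact sequences, so it suffices to find one on each factor: on $\mathbb{G}_a$ use $[p]=0$; on $\mathbb{G}_m$ use $F-[q]=0$; and on $A$, Weil's theorem provides a monic characteristic polynomial of degree $2g_A$ with constant term $\pm q^{g_A}$ and middle coefficients bounded by $\binom{2g_A}{i}q^{(2g_A-i)/2}$. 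Multiplying these yields $P_0\in\bZ[T]$ of degree $d\leq 2g_A+g_T\leq 2n$ with $P_0(F)=0$ on $G$.

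The main obstacle is condition~(ii), which $P_0$ itself need not satisfy. To arrange it, I would replace $F$ by $F^K$ for $K$ effectively large; the characteristic polynomial of $F^K$ on each factor has the same degree as for $F$ but with roots raised to the $K$-th power, and by Weil the absolute values of those roots lie in $\{1,\sqrt{q},q\}$ across the three factor types. A Vandermonde-style estimate on the product polynomial then gives $|c_i|/|c_0|\leq\binom{d}{i}q^{-Ki/2}$ for $i=1,\dots,d$. The binding case is $i=1$, where condition~(ii) reduces to the inequality $q^{K/2}>6Kd^2$, and this is satisfied for every $K$ beyond an effectively computable threshold polynomial in $n$ and $\log q$. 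Fixing any such $K$ determines $b$ and $r$ as explicit functions of $n$, verifies~(i) and~(ii), and completes the plan.
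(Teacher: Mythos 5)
Your proof is correct and takes essentially the same approach as the paper: both use Chevalley's theorem together with the structure theory of commutative linear algebraic groups to produce an annihilating polynomial for a suitable power of $F$ built from the unipotent $p$-power scalar, the torus relation, and the Weil polynomial of the abelian quotient, and then choose that power large enough to force the coefficient bound of Lemma~\ref{explicitwspan}(ii). The only cosmetic difference is that the paper uses a single factor $(F^\ell-q^\ell)$ for the whole torus (degree one) where you iterate through $\mathbb{G}_m$ quotients (degree $g_T$), and the paper verifies the bounds by direct estimation of the coefficients in~(\ref{desired}) rather than by your elementary-symmetric-function estimate on the $K$-th powered roots.
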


\begin{proof}
By Chevalley's theorem, there is a short exact sequence
$$\xymatrix{
0\ar[r]& L \ar[r]& G\ar[r]^{\pi}& A\ar[r]& 0
}$$
over $\Fq$, where $L$ is a linear algebraic group and $A$ is an abelian variety.
On the other hand, $L=U\times M$ where $M$ is isomorphic to a multiplicative torus over $\Fq^{\alg}$ and $U$ is unipotent; see, for example,~\cite[Theorem~5.3.1]{brion}.

Choose $\ell>0$ sufficiently large so that
\begin{itemize}
\item[(a)]
$M$ is isomorphic to a multiplicative torus over $\mathbb F_{q^\ell}$, and
\item[(b)]
$\displaystyle q^\ell>q^{\frac{\ell}{2}}6\ell(2n+1)4^n$.
\end{itemize}
It is pointed out in Proposition~\ref{subsect-tori} below that this can be done effectively; that $\ell=\max(2^{11} 11!, 2^n n!)$ satisfies~(a).

We show that $b:=q^{\ell(n+1)}$ and $r:=\ell(2n+1)$ satisfy the conditions of Lemma~\ref{explicitwspan}.

Let $a=\dim A$ and $u=\dim U$.
Then $q^u$ annihilates $U$ by unipotency, and hence so does $q^{\ell u}$.
By~(a) we have that $(F^\ell-q^\ell)$ annihilates $M$.
As $A$ is an abelian variety over $\mathbb F_{q^\ell}$ we have that $F^\ell$ on $A$ is the root of a monic integer polynomial of the form
$Q(x):=\sum_{i=0}^{2a}c_ix^i$ where $|c_0|=q^{\ell a}$, $c_{2a}=1$, and
\begin{eqnarray}
\label{ci}
|c_i|\leq{2a\choose i}q^{\ell a-\frac{\ell i}{2}}\ \ \text{ for all }i.
\end{eqnarray}
See \cite[Theorem~1.1,~Chapter~2,~p.~75]{JMilne}.
In any case, on $G$,
\begin{eqnarray*}
0
&=&
q^{\ell u}(F^\ell-q^\ell)Q(F^\ell)\\
&=&
q^{\ell u}\left(\sum_{i=0}^{2a}c_iF^{\ell i+\ell}\right) -q^{\ell u+\ell}\left(\sum_{i=0}^{2a}c_iF^{\ell i}\right).
\end{eqnarray*}
so that
$$q^{\ell(u+1)}c_0
=
\sum_{i=0}^{2a-1}(q^{\ell u}c_i+q^{\ell(u+1)}c_{i+1})F^{\ell(i+1)}\ +\ q^{\ell u}F^{\ell(2a+1)}.$$
Noting that $u+a\leq \dim G\leq n$, we can multiply through by $q^{\ell(n-u-a)}$ to get
\begin{equation}
\label{desired}
q^{\ell(n-a+1)}c_0
=
\sum_{i=0}^{2a-1}(q^{\ell(n-a)}c_i+q^{\ell(n-a+1)}c_{i+1})F^{\ell(i+1)}\ +\ q^{\ell(n-a)}F^{\ell(2a+1)}.
\end{equation}
Note that $|q^{\ell(n-a+1)}c_0|=q^{\ell(n+1)}=b$ and the degree in $F$ on the right hand side is $\ell(2a+1)\leq\ell(2n+1)=r$.
So to show that conditions~(i) and~(ii) of Lemma~\ref{explicitwspan} hold with $b$ and $r$ it suffices to show that all the coefficients on the right hand side of~(\ref{desired}) are strictly bounded in absolute value by $\frac{b}{6r}$.

First consider the coefficient of $F^{\ell(2a+1)}$ in~(\ref{desired}), which is $q^{\ell(n-a)}$.
By condition~(b) on the choice of $\ell$, $q^\ell>6\ell(2n+1)$.
Thus
$$
q^{\ell(n-a)}
=
\frac{q^{\ell(n+1)}}{q^{\ell(a+1)}}
\leq
\frac{q^{\ell(n+1)}}{q^\ell}
<
\frac{q^{\ell(n+1)}}{6\ell(2n+1)}
=
\frac{b}{6r}
$$
as desired.

Next, for each $i=0,\dots, 2a-1$, consider the coefficient of $F^{\ell(i+1)}$.
It satisfies
\begin{eqnarray*}
|q^{\ell(n-a)}c_i+q^{\ell(n-a+1)}c_{i+1}|
&\leq	&
q^{\ell(n-a)}(|c_i|+q^\ell|c_{i+1}|)\\
&\leq&
q^{\ell n}\left({2a\choose i}q^{-\frac{\ell i}{2}}+{2a\choose i+1}q^{\ell-\frac{\ell i+\ell}{2}}\right)\ \ \text{ by~(\ref{ci})}\\
&=&
q^{\ell(n+\frac{1}{2})}\left({2a\choose i}q^{-\frac{\ell i+\ell}{2}}+{2a\choose i+1}q^{-\frac{\ell i}{2}}\right)\\
&\leq&
q^{\ell(n+\frac{1}{2})}\left({2a\choose i}+{2a\choose i+1}\right)\\
&\leq&
\frac{q^{\ell(n+1)}}{6\ell(2n+1)}6\ell(2n+1)q^{-\frac{\ell}{2}}4^n\ \ \text{ as }a\leq n\\
&<&
\frac{b}{6r}\ \ \text{ by choice of $\ell$ satisfying property~(b)}
\end{eqnarray*}
as desired.
\end{proof}

\begin{remark} \label{rem:weakspanningset}
It may be worth extracting the abstract group-theoretic content of the above proof.
Let $(M,F)$ be an abelian group with an injective endomorphism.
Suppose there is a polynomial $P(x)\in \mathbb{Z}[x]$ such that $P(F)$ annihilates~$M$, and such that all the roots of $P$ have modulus at least $\alpha>1$.
Then we can effectively find positive integers~$b$ and~$r$, in terms of $P$ and $\alpha$, which will satisfy the hypotheses of Lemma~\ref{explicitwspan}.
In particular, given a finite set $\Delta\subseteq M$, we can effectively find a weak $F^r$-spanning set for the $\zf$-submodule of $M$ generated by~$\Delta$.
\end{remark}

\begin{proof}
Write $P(x) = C(x-c_1)\cdots (x-c_d)$, where $C$ is a nonzero integer and each $c_i$ is an algebraic number of modulus at least $\alpha$.
Let $B=c_1\cdots c_d$.
So $B$ is a rational number of modulus greater than $1$.
We now pick a positive integer $N$ with the property that $\alpha^N > 6N d 2^d$.
Now, $P(x)$ divides the integer polynomial $Q(x):=C^N (x^N- c_1^N)\cdots (x^N-c_d^N)$, so that $Q(F)$ annhilates $M$ as well.
Observe that the constant coefficient of $Q$ is $C^N B^N$ and every other coefficient is a sum of at most $2^d$ terms that are all at most $|CB|^N/\alpha^N$ in modulus.
Hence, taking $r=Nd$, $b=|CB|^N$, and $b_1,\dots, b_r$ the other coefficients of $Q$ (or their negatives), we have satisfied the hypotheses of Lemma \ref{explicitwspan}.
Formula~$(\ref{sigmaform})$ of that lemma now gives an explicit weak $F^r$-spanning set.
\end{proof}

\bigskip
\section{A finiteness result on Frobenius pullbacks}

\noindent
The automaton we build in the next section to recognising $X\cap \Gamma$ will be based on a certain finiteness result.
We need two bits of notation to state the proposition:

First, given a variety $W$ over $\mathbb F_q$, with $q$ a power of a prime $p$, and a closed subvariety $V\subseteq W$ defined over a field extension of $\mathbb F_q$, and a natural number $\ell$, let us denote by $V^{q^{-\ell}}$ the transform of $V$ by the inverse of the $q$-power Frobenius on~$W$.
So locally, in an affine chart of $W$, this means replacing the coefficients of the defining equations of $V$ with their $q^\ell$ roots.

Secondly, if $(M,F)$ is an abelian group equipped with an injective endomorphism and $\Sigma\subseteq M$ is finite then by $\Sigma(\ell,F)$ we mean the set of elements of $M$ of the form $[w]_F$ where $w\in\Sigma^*$  is a word of length at most~$\ell$. (In~\cite{fsets-SML} this was denoted by the somewhat ambiguous $\Sigma^{(\ell)}$.) 

\begin{proposition}
\label{bound}
Suppose $G$ is a commutative algebraic group over a finite field~$\mathbb F_q$,
$F:G\to G$ is the endomorphism induced by the $q$-power Frobenius map, and $X$ is a closed subvariety of $G$.
Suppose $r>0$ is sufficiently large, $K$ is a finitely generated extension of $\mathbb F_{q^r}$ over which $X$ is defined, and $\Sigma\subseteq G(K)$ is finite.
Consider the following collection of subsets of $G(K)$,
$$\mathcal T_K:=\big\{ (X-\gamma)^{q^{-\ell r}}(K):\ell\geq 0, \gamma\in\Sigma(\ell,F^r)\big\}$$
Then $\mathcal T_K$ is finite.
\end{proposition}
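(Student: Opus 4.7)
My plan is to reformulate $(X-\gamma)^{q^{-\ell r}}(K)$ recursively, identify $\mathcal T_K$ with the forward orbit of $X(K)$ under a finite family of operators on subsets of $G(K)$, and then bound this orbit via a uniform complexity argument on Zariski closures.

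\textbf{Step 1: Recursive description.} Since $G$ and $F$ are defined over $\Fq$, for any subvariety $Y \subseteq G$ a point $y \in G(K)$ lies in $Y^{q^{-\ell r}}(K)$ if and only if $F^{\ell r}(y) \in Y$. Applied with $Y = X-\gamma$ this gives
\[
(X-\gamma)^{q^{-\ell r}}(K) \;=\; \{y \in G(K) : F^{\ell r}(y) + \gamma \in X\}.
\]
Writing $\gamma = \sigma_0 + F^r\sigma_1 + \cdots + F^{(m-1)r}\sigma_{m-1}$ with each $\sigma_i \in \Sigma$ and $m \le \ell$, I would verify by induction on $\ell$ that, setting $V_0 := X$ and $V_{j+1} := (V_j - \sigma_j)^{q^{-r}}$ (with the convention $\sigma_j := 0$ for $m \le j < \ell$), one has $V_\ell = (X-\gamma)^{q^{-\ell r}}$.

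\textbf{Step 2: Descent to $K$-points.} The key elementary observation is that $V_{j+1}(K)$ depends on $V_j$ only through $V_j(K)$: indeed, for $y \in G(K)$ one has $F^r(y) + \sigma \in G(K)$, so $F^r(y) + \sigma \in V_j$ if and only if $F^r(y) + \sigma \in V_j \cap G(K) = V_j(K)$. Defining the operator
\[
\tau_\sigma(S) \;:=\; \{y \in G(K) : F^r(y) + \sigma \in S\} \qquad (S \subseteq G(K)),
\]
we conclude $V_{j+1}(K) = \tau_{\sigma_j}(V_j(K))$. Hence $\mathcal T_K$ is precisely the forward orbit of $X(K)$ under the finite family $\{\tau_\sigma : \sigma \in \Sigma \cup \{0\}\}$.

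\textbf{Step 3: Finiteness of the orbit.} To bound this orbit I would track the Zariski closures $W_\ell := \overline{V_\ell(K)}^{\operatorname{zar}}$ viewed as $K$-subvarieties of $G$, observing that $V_\ell(K) = W_\ell(K)$ by construction, so it suffices to show $\{W_\ell\}$ is finite. The recursion of Step~1 together with the pure inseparability of $F$ yields $W_{j+1} \subseteq (W_j - \sigma_j)^{q^{-r}}$, with the underlying reduced scheme bijecting onto the translate $W_j - \sigma_j$; in particular $\deg W_\ell \le \deg X$ uniformly in $\ell$. A parallel but more delicate analysis on a suitable height function for subvarieties of $G$ of bounded degree shows the arithmetic complexity of $W_\ell$ is also uniformly bounded: each translation by $\sigma \in \Sigma$ contributes an additive increment, while the subsequent $q^{-r}$-th Frobenius root divides heights by $q^r$, and the hypothesis that $r$ is sufficiently large (so that $q^r$ exceeds the degree of the group multiplication on $G$) ensures the net effect is a contraction. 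A Chow-variety finiteness result for subvarieties of $G$ of bounded degree and bounded height then yields finiteness of $\{W_\ell\}$, and hence of $\mathcal T_K$.

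\textbf{Main obstacle.} The principal technical difficulty lies in Step~3, in formalising the "uniform arithmetic complexity" bound on the closures $W_\ell$. Bounded geometric degree comes essentially for free from the pure inseparability of $F$, but controlling coefficient heights across all iterates requires a careful choice of height function on Chow varieties of $G$ that cooperates with both group translation and inverse Frobenius, and then making rigorous the "contraction" claim. The precise role of the hypothesis "$r$ sufficiently large" is to ensure that one application of the $q^{-r}$-th Frobenius root more than compensates for the height increase introduced by one translation step through $\Sigma$, so that the orbit of $W_0 = \overline{X(K)}^{\operatorname{zar}}$ stays trapped in a finite Chow-theoretic region.
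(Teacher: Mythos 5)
Your Steps~1 and~2 are correct, and in fact they rediscover exactly the transition structure that the paper uses in Section~\ref{sect-automaton} to build the automaton recognising $X\cap\Gamma$: the observation that $V^{q^{-(\ell+1)r}}(K)$ depends on $V^{q^{-\ell r}}$ only through its $K$-points is the same as the well-definedness lemma there, and your operators $\tau_\sigma$ are precisely the automaton's transitions. However, Step~3 has a genuine gap, and it is exactly the gap that the paper's lambda-function machinery exists to fill. The difficulty is that although $V_{j+1}(K)=(W_j-\sigma_j)^{q^{-r}}(K)$ with $W_j:=\overline{V_j(K)}$, the variety $(W_j-\sigma_j)^{q^{-r}}$ is a priori defined over $K^{q^{-r}}$, not over $K$, so it is not clear how to extract from it a bounded-height system of defining equations over $K$ for $W_{j+1}$. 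The paper resolves this with Lemma~\ref{lambda} and Corollary~\ref{prolong}: the set $V^{q^{-\ell r}}(K)$ is cut out by the polynomials $P_i^\lambda$ for $\lambda\in\Lambda(\ell)$, which live over $K$ and whose degrees and $\height_W$-heights can be bounded uniformly (Lemmas~\ref{lambdaW}, \ref{height+}, Corollary~\ref{htsigma}). Without that or an equivalent device, your appeal to a Chow-variety height that "cooperates with inverse Frobenius" is a plan, not a proof, and the contraction claim remains unverified.

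A second point worth noting: the height contribution of the translations is not a per-step "additive increment" as your sketch suggests. A single $\gamma\in\Sigma(\ell,F^r)$ has $\height_W(\gamma)\le C_0 q^{\ell r}$, which grows exponentially in $\ell$; the role of $r$ being large enough that $q^r\ge C_0$ is to keep this growth to order $q^{\ell r}$ rather than $C_0^\ell$, so that the $\ell$-fold lambda transform (which divides heights by $q^{\ell r}$) exactly compensates. This delicate balance is the actual content of the estimate at the end of the paper's proof of Proposition~\ref{bound}, and it is what you would need to reproduce at the Chow-variety level to make Step~3 rigorous.
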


\begin{remark}
As the proof will show, we need $r$ large enough so that $q^r$ is an upper bound on the total degree of a defining set of polynomials over $\Fq$ for the group multiplication on $G$.
\end{remark}

The main technique for proving this proposition comes from~$\S 5$ of~\cite{derksen} where it is called ``Frobenius splitting".
The first point is that while the $(X-\gamma)^{q^{-\ell r}}$ are varieties defined over the (ever increasing) field extensions $K^{q^{-\ell r}}$, the set $(X-\gamma)^{q^{-\ell r}}(K)$ agrees with the $K$-points of a variety defined over $K$, namely the transform of $X-\gamma$ by ``lambda functions".
We now make this precise.

Fix $K$ a finitely generated extension of $\mathbb F_q$.
Since $K$ is of finite degree of imperfection, we can fix a linear basis $1=h_1,\dots,h_m$ for $K$ over $K^{q}$.
We obtain additive operators $\lambda_1,\dots,\lambda_m$ on $K$ with the property that for all $x\in K$,
$$x=\lambda_1(x)^qh_1+\lambda_2(x)^qh_2+\cdots+\lambda_m(x)^qh_{m}.$$

\begin{definition}[Lambda functions]
\label{lambdafunctions}
For $\ell\geq 0$, by an {\em order $\ell$ lambda function} we will mean an $\ell$-fold composition of functions from $\{\lambda_1,\dots,\lambda_m\}$.
We will denote the set of these functions by
$\displaystyle \Lambda(\ell):=\{\lambda_{i_1}\circ\lambda_{i_2}\circ\cdots\circ\lambda_{i_\ell}:\text{ each }1\leq i_j\leq m\}$.
\end{definition}

They have the following multiplicative property.

\begin{lemma}
\label{lambda}
Suppose $P\in K[x]$ is a polynomial in the $n$ variables $x=(x_1,\dots,x_n)$.
Let $\lambda\in\Lambda(\ell)$ and $a\in K^n$.
Then
$$\lambda\big(P(a^{q^\ell})\big)=P^\lambda(a)$$
where $P^\lambda$ denotes the polynomial obtained by applying $\lambda$ to the coefficients of $P$.

In particular,
$P(a^{q^\ell})=0$
if and only if
$P^\lambda(a)=0$ for all $\lambda\in\Lambda(\ell)$.
\end{lemma}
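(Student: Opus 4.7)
The plan is to prove the main identity by induction on $\ell$, with the substantive case being $\ell=1$; the ``in particular'' clause then falls out of the same computation.

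For $\ell=1$, I would write $P(x)=\sum_\alpha c_\alpha x^\alpha$ in multi-index notation, and expand each coefficient using the basis: $c_\alpha=\sum_{i=1}^m \lambda_i(c_\alpha)^q h_i$. Substituting into $P(a^q)=\sum_\alpha c_\alpha (a^\alpha)^q$ and using that raising to the $q$-th power is additive in characteristic $p$, I get
$$P(a^q)=\sum_{i=1}^m h_i\bigl(P^{\lambda_i}(a)\bigr)^q.$$
On the other hand, applying the defining identity of the $\lambda_i$ to the element $P(a^q)\in K$ gives $P(a^q)=\sum_i h_i\,\lambda_i(P(a^q))^q$. Comparing these two expressions in the basis $\{h_i\}$ of $K$ over $K^q$, and using injectivity of Frobenius, yields $\lambda_i(P(a^q))=P^{\lambda_i}(a)$.

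For the inductive step, I would factor $\lambda=\mu\circ\lambda_{i_\ell}$ with $\mu\in\Lambda(\ell-1)$, and apply the $\ell=1$ case to the auxiliary polynomial $Q(y):=P(y^{q^{\ell-1}})$ evaluated at $a^q$. Since $Q(a^q)=P(a^{q^\ell})$ and clearly $Q^{\lambda_{i_\ell}}(y)=P^{\lambda_{i_\ell}}(y^{q^{\ell-1}})$ (only coefficients are touched by $\lambda_{i_\ell}$), this gives $\lambda_{i_\ell}(P(a^{q^\ell}))=P^{\lambda_{i_\ell}}(a^{q^{\ell-1}})$. Applying $\mu$ and invoking the induction hypothesis for the polynomial $P^{\lambda_{i_\ell}}$ at level $\ell-1$ delivers $\lambda(P(a^{q^\ell}))=P^{\mu\circ\lambda_{i_\ell}}(a)=P^\lambda(a)$.

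For the second assertion, one direction is immediate: each $\lambda_i$ is additive and therefore sends $0$ to $0$, so any $\lambda\in\Lambda(\ell)$ does the same. For the converse, I would iterate the identity $P(a^{q^\ell})=\sum_i h_i(P^{\lambda_i}(a^{q^{\ell-1}}))^q$ a total of $\ell$ times, at each step rewriting each intermediate value $P^{\mu}(a^{q^k})$ via the same formula applied to the polynomial $P^\mu$. After $\ell$ iterations, $P(a^{q^\ell})$ is expressed as a $K$-linear combination of the $q^\ell$-th powers of the values $P^\lambda(a)$ for $\lambda\in\Lambda(\ell)$; if all of these vanish, then so does $P(a^{q^\ell})$. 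The only real care required is bookkeeping the order of composition of the $\lambda_i$'s and the exponents of $q$ in the recursion, but there is no conceptual obstacle.
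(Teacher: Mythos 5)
Your proof is correct and follows the same overall strategy as the paper's: establish the $\ell=1$ case, then induct on $\ell$ by peeling off one $\lambda_i$ at a time, and handle the ``in particular'' clause via additivity (one direction) and the fact that $\Lambda(\ell)$ jointly detects zero (other direction). The only differences are cosmetic: you prove the base case by expanding the full polynomial in the $K^q$-basis at once, whereas the paper isolates the auxiliary identity $\lambda_i(uv^q)=\lambda_i(u)v$ and inducts on monomial degree; and your use of the auxiliary polynomial $Q(y)=P(y^{q^{\ell-1}})$ in the inductive step is an unnecessary (but harmless) detour, since one can just apply the $\ell=1$ case to $P$ at the point $a^{q^{\ell-1}}$ directly, as the paper does.
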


\begin{proof}
Note first of all that $\lambda_i(uv^q)=\lambda_i(u)v$ for all $u,v\in K$ and $i=1,\dots,m$.
Indeed, $u=\lambda_1(u)^qh_1+\cdots+\lambda_m(u)^qh_m$ and so
$$uv^q=\big(\lambda_1(u)v\big)^qh_1+\cdots+\big(\lambda_m(u)v\big)^qh_m.$$
We now prove the $\ell=1$ case of the lemma for monomials $P(x)$ by induction on the total degree. The case of $P$ a constant is clear.
Writing $P(x)=Q(x)x_j$ for a monomial $Q$ we have
$$\lambda_i\big(P(a^q)\big)=\lambda_i\big(Q(a^q)a_j^q\big)=\lambda_i(Q(a^q))a_j=Q^{\lambda_i}(a)a_j=P^{\lambda_i}(a)$$
as desired.
By linearity, the $\ell=1$ case of the lemma follows.
By induction on $\ell$ the general case follows:
$\lambda\circ\lambda_i\big(P(a^{q^{(\ell+1)}})\big)=\lambda\big(P^{\lambda_i}(a^{q^\ell})\big)=P^{\lambda\circ\lambda_i}(a)$.

The left-to-right direction of the ``in particular" clause is an immediate corollary.
For the converse, we need to observe that for $u\in K$, if $\lambda(u)=0$ for all $\lambda\in\Lambda(\ell)$ then $u=0$.
When $\ell=1$ this is clear by choice of $\lambda_1,\dots,\lambda_m$, and the general case follows by induction.
\end{proof}

\begin{corollary}
\label{prolong}
Suppose $V\subset\mathbb A^n$ is a variety defined by the vanishing of polynomials $P_1,\dots, P_s\in K[x_1,\dots,x_n]$.
Then 
$$\displaystyle V^{q^{-\ell}}(K)=\{a\in K^n:P_i^{\lambda}(a)=0,i=1,\dots,s, \lambda\in\Lambda(\ell)\}.$$
\end{corollary}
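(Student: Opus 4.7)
The plan is to unfold the definition of $V^{q^{-\ell}}$ and translate the condition ``$a \in V^{q^{-\ell}}(K)$'' into a condition about the original polynomials $P_i$ evaluated at $a^{q^\ell}$, and then invoke Lemma~\ref{lambda} to finish.

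First, I would spell out what $V^{q^{-\ell}}$ means in terms of defining equations: if $P_i = \sum_j c_{i,j} x^{m_j}$, then $V^{q^{-\ell}}$ is cut out by the polynomials $Q_i := \sum_j c_{i,j}^{q^{-\ell}} x^{m_j}$ (with coefficients a priori in $K^{q^{-\ell}}$). The key computational identity is
$$Q_i(a)^{q^\ell} \;=\; \Bigl(\sum_j c_{i,j}^{q^{-\ell}}\, a^{m_j}\Bigr)^{q^\ell} \;=\; \sum_j c_{i,j}\, a^{q^\ell m_j} \;=\; P_i(a^{q^\ell}),$$
valid in characteristic $p$ for any $a \in K^n$. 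Since the $q^\ell$-power map is injective on $K$, we see that $Q_i(a) = 0$ if and only if $P_i(a^{q^\ell}) = 0$. Consequently
$$V^{q^{-\ell}}(K) \;=\; \{a \in K^n : P_i(a^{q^\ell}) = 0 \text{ for } i=1,\dots,s\}.$$

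Second, I would apply the ``in particular'' clause of Lemma~\ref{lambda}, which is exactly the statement that $P_i(a^{q^\ell}) = 0$ if and only if $P_i^\lambda(a) = 0$ for every $\lambda \in \Lambda(\ell)$. Substituting this characterisation into the displayed equation above yields the claimed description
$$V^{q^{-\ell}}(K) \;=\; \{a \in K^n : P_i^\lambda(a) = 0,\ i=1,\dots,s,\ \lambda \in \Lambda(\ell)\}.$$

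There is no substantial obstacle: the only technical point worth noting is that $V^{q^{-\ell}}$ is a priori a variety over the imperfect extension $K^{q^{-\ell}}$, so one must check that its $K$-points can be described without enlarging the field, which is precisely what the injectivity of the $q^\ell$-power map gives us. Everything else is a direct citation of Lemma~\ref{lambda}.
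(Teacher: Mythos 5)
Your proof is correct and follows the same route the paper intends, which simply records the corollary as ``immediate from Lemma~\ref{lambda}.'' You have spelled out the routine unfolding of the definition of $V^{q^{-\ell}}$ and the injectivity of the Frobenius, which the paper leaves implicit, before invoking the ``in particular'' clause of Lemma~\ref{lambda} exactly as the paper does.
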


\begin{proof}
This is immediate from Lemma~\ref{lambda}.
\end{proof}

Our proof of Proposition~\ref{bound} will take the following form: Working in an affine chart we find bounds, independently of $\ell$, on the degrees and ``heights" of the coefficients of the polynomials $P_1^\lambda,\dots,P_s^\lambda$ as $\lambda$ ranges in $\Lambda(\ell)$ and $\operatorname{Zeros}\{P_1,\dots,P_s\}=X-\gamma$ ranges among the translates of $X$ by $\gamma\in\Sigma(\ell,F^r)$.
Since, by Corollary~\ref{prolong}, $(X-\gamma)^{q^{-\ell}}(K)$ is the set of $K$-points of the variety defined by these $P_i^\lambda$, the degree and height bounds on the $P_i^\lambda$ will imply that there are only finitely many possibilities for $(X-\gamma)^{q^{-\ell}}(K)$.

We now describe what our naive notion of ``height" will be.
We do not quite use the usual canonical height because we are interested in keeping everything effective -- see~$\S$\ref{subsect-effectivity} below.
Let $W\subseteq K$ be a finite dimensional  (and hence finite) $\mathbb F_q$-subspace such that $K=\mathbb F_q(W)$.
We set $W^0:=\mathbb F_q$ and for $\ell>0$ denote by $W^\ell$ the $\mathbb F_q$-span of the $\ell$-fold products of elements in $W$.
So $\displaystyle \mathbb F_q[W]=\bigcup_{\ell<\omega}W^\ell$.

\begin{definition}[Height]
\label{ht0}
Suppose $W\subseteq K$ is a finite dimensional $\mathbb F_q$-subspace such that $K=\mathbb F_q(W)$.
Define  $\height_W:\mathbb F_q[W]\to\mathbb N$ by $\height_W(x):=\min\{\ell:x\in W^\ell\}$.
We can extend this to $K$ by defining $\height_W(u)$ to be the infimum of $\max\big\{\height_W(x),\height_W(y)\big\}$ where $u=\frac{x}{y}$ and $x,y\in\mathbb F_q[W]$.
More generally, given $a\in \mathbb{P}^n(K)$, we define $\height_W(a)$ to be $\inf\max\big\{\height_W(x_0),\ldots ,\height_W(x_n)\big\}$, where the infimum is taken over all representations $a=[x_0:x_1:\cdots:x_n]$ with each $x_i\in \mathbb F_q[W]$.
\end{definition}

Note that this height satisfies the Northcott property: the set of points of height bounded by $N$ is finite because $W^N$ itself is a finite set.
In order for this height function to have the further properties that we desire, we need to choose $W$ carefully.
We follow~$\S 5$ of~\cite{derksen} closely here.
The following, for example, appears in the proof of Proposition~5.2 of~\cite{derksen}, and we repeat it here for the sake of completeness.

\begin{lemma}
\label{R}
There exists a finitely generated $\mathbb F_q$-subalgebra $R$ of $K$, containing $h_1,\dots,h_m$ as well as the generators of $K$ over $\mathbb F_q$, such that $\lambda_i(R)\subseteq R$ for all $i=1,\dots,m$.
\end{lemma}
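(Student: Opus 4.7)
The plan is to choose the basis $h_i$ conveniently and then construct $R$ as the integral closure of a $\lambda$-stable subring of $K$. Since $\mathbb F_q$ is perfect and $K$ is finitely generated over it, $K$ admits a separating transcendence basis $t_1,\ldots,t_d$, and we may write $K = \mathbb F_q(t_1,\ldots,t_d)(\alpha)$ for a primitive element $\alpha$ separable algebraic over $\mathbb F_q(t)$. A standard calculation then gives $m = [K:K^q] = q^d$, and a natural basis of $K$ over $K^q$ is provided by the monomials $h_i := t_1^{i_1}\cdots t_d^{i_d}$ for $(i_1,\ldots,i_d) \in \{0,\ldots,q-1\}^d$. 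The lemma leaves the choice of the $h_i$ to us, so I fix this basis. With respect to it, the $\lambda_i$'s act on $\mathbb F_q[t_1,\ldots,t_d]$ as the $q$-adic coefficient extractors: the unique expansion $p = \sum_i p_i(t^q)\,t^i$ with $p_i \in \mathbb F_q[t]$ yields $\lambda_i(p) = p_i$. Hence $\mathbb F_q[t_1,\ldots,t_d]$ is already a $\lambda$-stable finitely generated $\mathbb F_q$-algebra containing the $h_i$, and on it each $\lambda_i$ strictly contracts total degree by a factor of $q$.

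Next I would enlarge this polynomial ring to accommodate $\alpha$ and the extra generators of $K$ required by the lemma. Choose a polynomial $c(t) \in \mathbb F_q[t_1,\ldots,t_d]$ such that $\alpha$ and all the prescribed generators are integral over the localization $A := \mathbb F_q[t_1,\ldots,t_d, c(t)^{-1}]$. A direct check using $c\cdot c^{-1} = 1$ (and, if needed, replacing $c$ by a suitable power) shows that $A$ itself remains $\lambda$-stable. Let $\widetilde R$ denote the integral closure of $A$ in $K$. Because $K/\mathrm{Frac}(A)$ is finite separable and $A$ is noetherian normal, $\widetilde R$ is a finitely generated $A$-module, hence a finitely generated $\mathbb F_q$-algebra; it contains $\alpha$, the prescribed generators, and all the $h_i$.

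It remains to verify that $\widetilde R$ is $\lambda$-stable, and this is the main obstacle. Closure is controlled by the key identity
\[
\lambda_i(xy) \;=\; \sum_{j,k} \lambda_k(x)\,\lambda_i(h_j h_k)\,\lambda_j(y),
\]
derived from the basic rule $\lambda_i(uv^q) = \lambda_i(u)\,v$ by expanding $y$ in the $h_j$-basis. This identity reduces $\lambda$-stability of a subalgebra to checking that $\lambda_i(e)$ lies in it for each element $e$ of a generating set, since $\lambda_i(h_j h_k)$ is automatic (the $h_j h_k$ live in $\mathbb F_q[t_1,\ldots,t_d]$, which is $\lambda$-stable). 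To verify this for $\widetilde R$, I would argue that if $x \in K$ satisfies a monic polynomial relation over the $\lambda$-stable ring $A$, then applying the $q$-adic coefficient extraction to the relation — carefully using that $A$ decomposes as $\sum_i A^q h_i$ as an $A^q$-module, that the coefficients of a power of $x$ can be expanded via the key identity into polynomial expressions in the $\lambda_j(x)$ with coefficients in $A$, and that the $q$-th power map is injective on $K$ — produces integral relations for $\lambda_1(x),\ldots,\lambda_m(x)$ over $A$. Consequently $\lambda_i(\widetilde R) \subseteq \widetilde R$, and we may take $R := \widetilde R$.
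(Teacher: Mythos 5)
Your setup is sensible — you correctly reduce to finding a $\lambda$-stable finitely generated algebra, you correctly check that $\mathbb F_q[t_1,\ldots,t_d]$ and its localizations at a single $c$ are $\lambda$-stable, and the multiplicative identity $\lambda_i(xy)=\sum_{j,k}\lambda_k(x)\lambda_i(h_jh_k)\lambda_j(y)$ is correct and useful. But there is a genuine gap in the final step. The assertion that ``$x$ integral over a $\lambda$-stable ring $A$ implies each $\lambda_i(x)$ integral over $A$,'' and hence that the integral closure $\widetilde R$ of $A$ is $\lambda$-stable, is false. Concretely, take $q=2$, $d=1$, $K=\mathbb F_2(t)(\alpha)$ with $\alpha^3+\alpha+t=0$ (separable over $\mathbb F_2(t)$, and $t\notin K^2$, so $\{1,t\}$ is a $K^2$-basis of $K$). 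Take $A=\mathbb F_2[t]$, which is $\lambda$-stable, and $x=\alpha$, which is integral over $A$. A direct computation gives $\lambda_0(\alpha)=0$ and $\lambda_1(\alpha)=(\alpha+\alpha^2)/t$, with $\lambda_1(\alpha)^2=\alpha/t$, and one finds that $\lambda_1(\alpha)$ satisfies the minimal polynomial $X^3+t^{-1}X+t^{-1}$ over $\mathbb F_2(t)$. Since $\mathbb F_2[t]$ is integrally closed, this non-integral minimal polynomial shows $\lambda_1(\alpha)$ is \emph{not} integral over $\mathbb F_2[t]$; indeed $\widetilde R=\mathbb F_2[t,\alpha]$ here (the plane curve is smooth) and $(\alpha+\alpha^2)/t\notin\mathbb F_2[t,\alpha]$. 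So $\widetilde R$ is not $\lambda$-stable with your choice of $c=1$, and your argument gives no reason why a single further localization would repair this in general — one would have to show the ``bad locus'' is contained in a principal closed set, and that is the whole content of the lemma.

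There is also a smaller deviation worth noting: the lemma fixes the basis $h_1,\dots,h_m$ (and hence the $\lambda_i$) in advance, whereas you reselect it to be the monomial basis in a separating transcendence basis. The paper's argument works for an arbitrary $h_1=1,h_2,\dots,h_m$. It avoids integral closure entirely: it takes $R$ to be the $\mathbb F_q$-algebra generated by the $h_i$ and the generators of $K$, observes that the quotient $R/(R^qh_1+\cdots+R^qh_m)$ is a finitely generated torsion $R^q$-module (torsion because any $a\in R$ has $\lambda_i(a)\in\operatorname{Frac}(R)$, and clearing denominators puts $b^qa$ into $\sum R^qh_i$), so it is annihilated by some $g^q$, and then localizing at $g$ gives $R_g=\sum R_g^qh_i$, i.e.\ $\lambda_i(R_g)\subseteq R_g$. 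This Noetherian annihilator argument is exactly what is missing from your approach and is what makes a single localization suffice.
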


\begin{proof}
Let $R$ be the $\mathbb F_q$-algebra generated by $h_1,\dots,h_m$ along with the generators of $K$ over $\mathbb F_q$.
So $R^qh_1 +\cdots +R^qh_m\subseteq R$, and we need to modify $R$ so as to get equality.
So consider the $R^q$-module $R/(R^qh_1 +\cdots R^q h_m)$.

We first claim that it is torsion.
Indeed, if $a\in R$ then
$$a=\lambda_1(a)^qh_1+\lambda_2(a)^qh_2+\cdots+\lambda_m(a)^qh_{m},$$ and so if we let a nonzero $b\in R^q$ be such that $b\lambda_i(a)^q\in R^q$ for each $i=1,\dots, m$, then  $ba\in R^qh_1 +\cdots +R^q h_m$.

On the other hand, if $t_1,\dots, t_\nu$ generate $R$ as an $\mathbb F_q$-algebra, then as an $R^q$-module it is generated by the finite set $\{t_1^{r_1}\cdots t_\nu^{r_\nu}:\text{ each }0\leq r_i<q\}$.
So the quotient $R/(R^qh_1 +\cdots R^q h_m)$ is a finitely generated $R^q$-module.

Hence there is nonzero $g\in R$ such that $g^qR\subseteq R^qh_1 +\cdots R^q h_m$.
Localising at $g$, it is not hard to see that 
$R_g=R_{g^q}=R_g^qh_1 +\cdots R_g^qh_m$,
and so~$R_g$ works.
\end{proof}

Fix $R$ as in the lemma and let $W$ be a finite dimensional $\Fq$-subspace of $R$ containing  $h_1,\dots,h_m$ as well as generators for $R$ over $\Fq$.
We work with $\height_W$.
The following is the key property of $\height_W$, and it appears in the proof of Proposition~5.2 of~\cite{derksen}.
We include it here, again for completeness.

\begin{lemma}
\label{lambdaW}
There is a constant $D\geq 0$ such that for all $i=1,\dots,m$, and all $a\in R$, we have
$\height_W\big(\lambda_i(a)\big)\leq \lfloor\frac{\height_W(a)}{q}\rfloor +D$.
\end{lemma}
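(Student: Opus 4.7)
The plan is to prove the bound by induction on $N := \height_W(a)$. By $\mathbb F_q$-linearity of $\lambda_i$ we may reduce to the case where $a$ is a pure product $w_1\cdots w_N$ of elements of $W$. The base case, for $N$ below a fixed threshold $N_0$ (say $N_0 := q$), is immediate: $W^{N_0}$ is finite, so $\{\lambda_i(a):a\in W^{N_0}\}$ is a finite subset of $R$ and hence contained in $W^{D_0}$ for some $D_0$; any $D\ge D_0$ handles this case.

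The central technical tool for the induction is the identity $\lambda_i(uv^q)=\lambda_i(u)\,v$ established in the proof of Lemma~\ref{lambda}. What I really want to show is that there is a fixed finite-dimensional $\mathbb F_q$-subspace $U\subseteq R$ and a constant $D_1$ such that every $a\in W^N$ admits a decomposition
\begin{equation*}
a = \sum_{\alpha} u_\alpha\, v_\alpha^q,\qquad u_\alpha\in U,\ v_\alpha\in W^{\lfloor N/q\rfloor+D_1}.
\end{equation*}
Given this, applying $\lambda_i$ termwise yields $\lambda_i(a)=\sum_\alpha \lambda_i(u_\alpha)\,v_\alpha$, and since the finite set $\{\lambda_i(u):u\in U,\ 1\le i\le m\}$ is contained in some $W^{D_2}$, one concludes $\height_W(\lambda_i(a))\le \lfloor N/q\rfloor+D_1+D_2$. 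Taking $U$ to be the $\mathbb F_q$-span of $h_1,\dots,h_m$ and using the uniqueness of the decomposition $a=\sum_j h_j b_j^q$ (which holds by $K^q$-linear independence of the $h_j$'s), this intermediate claim is in fact equivalent, via Lemma~\ref{R}, to the conclusion of the lemma itself with $b_j=\lambda_j(a)$; so I am seeking a direct dimensional argument rather than a naive recursion.

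The main obstacle is producing this decomposition with $v_\alpha$-height controlled by $\lfloor N/q\rfloor+O(1)$ rather than the $O(N)$ one gets from iterating $w=\sum_j \lambda_j(w)^q h_j$ factor-by-factor. The right approach is a Hilbert-polynomial comparison. Writing $d:=\operatorname{trdeg}(K/\mathbb F_q)$, one has $\dim_{\mathbb F_q} W^N\sim cN^d$ and, by injectivity of Frobenius together with the $K^q$-linear independence of the $h_j$'s,
\begin{equation*}
\dim_{\mathbb F_q}\sum_j h_j (W^M)^{(q)} \;=\; m\cdot\dim_{\mathbb F_q} W^M \;\sim\; c\,m\,M^d.
\end{equation*}
Choosing the $h_j$ so as to form a $K^q$-basis of $K$ forces $m=[K:K^q]=q^d$, and then the two dimensions match precisely at $M=N/q$. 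I would upgrade this leading-order match to the honest containment $W^N\subseteq\sum_j h_j(W^{\lfloor N/q\rfloor+D_1})^{(q)}$ by induction on $N$, absorbing the $O(M^{d-1})$ discrepancy $\dim W^{M+D_1}-\dim W^M$ into the additive constant $D_1$; Noether normalisation of $R$ over a polynomial subring $\mathbb F_q[x_1,\dots,x_d]$ with the $x_i$ chosen inside $W^C$ for some bounded $C$ makes the lifting explicit. Closing this induction produces the uniform constant $D=D_1+D_2$ required.
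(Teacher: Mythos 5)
Your proposal correctly identifies the heart of the matter — that one needs to decompose an element $a\in W^N$ as a sum of terms $u\cdot v^q$ with $v$ of height about $N/q$ (not $N$), so that applying $\lambda_i$ via $\lambda_i(uv^q)=\lambda_i(u)v$ lands in a space of the right height. But the route you take to produce that decomposition has a genuine gap. A Hilbert-polynomial comparison shows that $\dim_{\mathbb F_q}W^N$ and $\dim_{\mathbb F_q}\sum_j h_j(W^{\lfloor N/q\rfloor})^{\langle q\rangle}$ have matching leading asymptotics, but agreement (or even an inequality) of dimensions of two subspaces of $R$ does not give containment of one in the other: they could sit transversally. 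Your suggestion to "upgrade this to the honest containment by induction, absorbing the $O(M^{d-1})$ discrepancy into the additive constant $D_1$" does not work as stated — the discrepancy $\dim W^{M+D_1}-\dim W^M$ grows like $D_1M^{d-1}$, so it cannot be absorbed into a fixed constant, and even if it could, bounding the dimension gap does not certify that the missing directions of $W^N$ lie in $\sum_j h_j(W^{M})^{\langle q\rangle}$ rather than elsewhere. The Noether-normalisation remark gestures at where an actual containment might come from, but as written it does not close the argument.

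The paper's proof bypasses all of this with a short monomial computation. Fix a basis $e_1,\dots,e_d$ of $W$ (here $d=\dim_{\mathbb F_q}W$, not $\operatorname{trdeg}(K/\mathbb F_q)$). Then $W^\ell$ is spanned by monomials $e_1^{n_1}\cdots e_d^{n_d}$ with $\sum n_i=\ell$, and writing each $n_i=qm_i+r_i$ with $0\le r_i<q$ gives
$$e_1^{n_1}\cdots e_d^{n_d}=(e_1^{m_1}\cdots e_d^{m_d})^q\,(e_1^{r_1}\cdots e_d^{r_d}),$$
with $\sum m_i\le\lfloor\ell/q\rfloor$ and $\sum r_i\le d(q-1)$. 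Hence $W^\ell\subseteq(W^{\lfloor\ell/q\rfloor})^{\langle q\rangle}\cdot W^{d(q-1)}$, with $W^{d(q-1)}$ a fixed finite-dimensional subspace independent of $\ell$. By Lemma~\ref{R}, $R=\sum_j R^q h_j$, and since $R=\bigcup_\ell W^\ell$, the finite set $W^{d(q-1)}$ sits inside $\sum_j(W^D)^{\langle q\rangle}h_j$ for some $D$; multiplying the two containments gives $W^\ell\subseteq\sum_j(W^{\lfloor\ell/q\rfloor+D})^{\langle q\rangle}h_j$, which is exactly your intermediate claim, established constructively rather than by dimension count. So the decomposition you wanted exists for an elementary algebraic reason — base-$q$ division of exponents — and the asymptotic-dimension machinery, which cannot by itself produce containment, is unnecessary.
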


\begin{proof}
For any set $A\subseteq K$, let $A^{\langle q\rangle}:=\{a^q:a\in A\}$.
Note that when $A$ is a subring of $K$ we have just been denoting this by $A^q$, but when working with $\mathbb F_q$-linear subspaces $V\subseteq K$ it is worth being explicit so as to distinguish between the subspace $V^{\langle q\rangle}$ of $q$-powers and the subspace $V^\ell$ generated by the $\ell$-fold products.

Now, it suffices to show that for all $\ell\geq 0$, 
$\displaystyle \lambda_i(W^\ell)\subseteq W^{\lfloor\frac{\ell}{q}\rfloor +D}$.
That is, we need to prove that
$$W^\ell\subseteq (W^{\lfloor\frac{\ell}{q}\rfloor +D})^{\langle q\rangle}h_1+\cdots+(W^{\lfloor\frac{\ell}{q}\rfloor +D})^{\langle q\rangle}h_m$$
for all $\ell\geq 0$.

Let $d:=\dim W$ and fix a basis $e_1,\dots,e_d$ of $W$ over $\mathbb F_q$.
Then $W^\ell$ is spanned by elements like $e_1^{n_1}\cdots e_d^{n_d}$ where $n_1+\cdots+n_d=\ell$.
Writing each $n_i=qm_i+r_i$ where $0\leq r_i<q$, we have that
$$e_1^{n_1}\cdots e_d^{n_d}=(e_1^{m_1}\cdots e_d^{m_d})^q(e_1^{r_1}\cdots e_d^{r_d}).$$
Note that $m_1+\cdots m_d\leq\lfloor\frac{\ell}{q}\rfloor$ while $r_1+\cdots+r_d\leq d(p-1)$.
So
$$W^\ell\subseteq (W^{\lfloor\frac{\ell}{q}\rfloor})^{\langle q\rangle} \cdot W^{d(q-1)}.$$
Here for vector subspaces $U,V$, by $U\cdot V$ we mean the vector subspace spanned by the product $uv$ where $u\in U$ and $v\in V$.

On the other hand, from Lemma~\ref{R} we have that 
$R=R^qh_1 +\cdots +R^qh_m$.
Since $\displaystyle R=\bigcup_{\ell<\omega} W^\ell$, the finite set $W^{d(q-1)}$ must be contained in
$$(W^D)^{\langle q\rangle}h_1 +\cdots +(W^D)^{\langle q\rangle} h_m$$
for some $D\geq 0$.
Putting these together we get
$$W^\ell\subseteq (W^{\lfloor\frac{\ell}{q}\rfloor +D})^{\langle q\rangle}h_1+\cdots+(W^{\lfloor\frac{\ell}{q}\rfloor +D})^{\langle q\rangle}h_m$$
as desired.
\end{proof}

Now, we have a commutative algebraic group $G$ over $\mathbb F_q$.
Write $G$ as a Zariski open subset of a Zariski closed subset of $\mathbb P^n$ over $\mathbb F_q$.
So $\height_W$ is defined on $G(K)$.
That it is compatible with the algebraic group structure is the following.

\begin{lemma}
\label{height+}
Let $C_0$ be an upper bound on the total degree of a defining set of polynomials over $\Fq$ for the group multiplication on $G$.
Then, for all $a,b\in G(K)$,
$$\height_W(a+b)\leq C_0\max\{\height_W(a),\height_W(b)\}.$$
\end{lemma}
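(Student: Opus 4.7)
The plan is to expand $a+b$ coordinate-wise using the defining polynomials of the group law and estimate heights via a simple multiplicative property of the filtration $\{W^\ell\}_{\ell\ge 0}$. Set $h:=\max\{\height_W(a),\height_W(b)\}$. Since $\height_W$ on $\mathbb P^n(K)$ is defined as an infimum of natural numbers, that infimum is attained, so I fix representations $a=[a_0:\cdots:a_n]$ and $b=[b_0:\cdots:b_n]$ with each $a_i,b_i\in\mathbb F_q[W]$ and $\max_i\height_W(a_i),\max_i\height_W(b_i)\le h$; in particular every $a_i$ and $b_i$ lies in $W^h$.

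The key algebraic observation is the submultiplicative property $W^\ell\cdot W^m\subseteq W^{\ell+m}$, which is immediate from the definition of $W^\ell$ as the $\mathbb F_q$-span of $\ell$-fold products of elements of $W$. Induction and $\mathbb F_q$-linearity then yield the following: for any $P\in\mathbb F_q[x_1,\dots,x_k]$ of total degree $d$ and any values $v_1,\dots,v_k\in W^h$, one has $P(v_1,\dots,v_k)\in W^{dh}$.

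The morphism $G\times G\to G\hookrightarrow\mathbb P^n$ is, on each open chart of $G\times G$, given by an $(n+1)$-tuple of bi-homogeneous polynomials over $\mathbb F_q$ of total degree at most $C_0$. I choose such a tuple $(\mu_0,\dots,\mu_n)$ valid at $(a,b)$, meaning the $\mu_j(a,b)$ do not all vanish, so that $a+b=[\mu_0(a,b):\cdots:\mu_n(a,b)]$. By the previous paragraph, each $\mu_j(a,b)$ lies in $W^{C_0 h}$, giving a representation of $a+b$ whose coordinates all have height at most $C_0 h$. Hence $\height_W(a+b)\le C_0 h=C_0\max\{\height_W(a),\height_W(b)\}$, as required.

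I do not expect any serious obstacle: the argument is essentially formal once the submultiplicativity of $\{W^\ell\}$ is in hand. The only mild subtlety is that the group law on a projective $G$ may require several coordinate-tuples on an open cover of $G\times G$, but the uniform degree bound $C_0$ applies to every such tuple, so the estimate holds in every chart.
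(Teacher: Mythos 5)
Your proof is correct and fills in exactly the ``straightforward computation'' that the paper elides: you use the same two facts (additive and multiplicative behaviour of $\height_W$, equivalently $W^\ell+W^m\subseteq W^{\max(\ell,m)}$ and $W^\ell\cdot W^m\subseteq W^{\ell+m}$) and apply them to the degree-$\le C_0$ polynomials over $\mathbb F_q$ defining the group law on a suitable chart. The only unstated detail is that the monotonicity $W^e\subseteq W^d$ for $e\le d$, which your estimate $P(v_1,\dots,v_k)\in W^{dh}$ quietly uses, holds because $1=h_1\in W$ by the paper's choice of $W$.
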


\begin{proof}
By definition, for all $x,y\in R$, $\height_W(x+y)\le \max\{\height_W(x),\height_W(y)\}$ and $\height_W(xy)\le \height_W(x)+\height_W(y)$.
As the coefficients of the polynomials defining the group multiplication are in $\mathbb F_q$ and hence have height $0$, a straightforward computation shows that $\height_W(a+b)\leq C_0\max\{\height_W(a),\height_W(b)\}$.
\end{proof}

Suppose now that we have a finite set $\Sigma\subseteq G(K)$.
Expanding $W$ if necessary, we may and will assume that every element of $\Sigma$ has a representation with all co-ordinates in $W$.

\begin{corollary}
\label{htsigma}
Let $C_0$ be as in Lemma~\ref{height+} and assume that $q\geq C_0$.
Then, for all $\ell\geq 1$, and all $\gamma\in \Sigma(\ell,F)$, we have $\height_W(\gamma)\leq C_0q^\ell$.
\end{corollary}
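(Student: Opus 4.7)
The plan is to set up a straightforward induction on the length of the word representing $\gamma$, using Lemma~\ref{height+} to control the addition and the action of Frobenius on heights to control the scaling by $F$.

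The first observation is that $F$ raises coordinates to the $q$-th power in the given projective embedding, so if a point has a representation with coordinates in $W^k$, then $F$ of it has a representation with coordinates in $W^{qk}$. Hence $\heightW(F(x))\leq q\,\heightW(x)$ for every $x\in G(K)$. By our assumption on $W$, each element of $\Sigma$ has height at most $1$, so $\heightW(F^ix_i)\leq q^i$ whenever $x_i\in\Sigma$.

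Now fix $\gamma\in\Sigma(\ell,F)$ and write $\gamma=[w]_F=x_0+Fx_1+\cdots+F^mx_m$ with $w=x_0x_1\cdots x_m\in\Sigma^*$ of length $m+1\leq\ell$. I would prove by induction on $i$ that the partial sum $s_i:=x_0+Fx_1+\cdots+F^ix_i$ satisfies $\heightW(s_i)\leq C_0q^i$. For $i=0$ this says $\heightW(x_0)\leq C_0$, which holds since $\heightW(x_0)\leq 1\leq C_0$. For the inductive step, Lemma~\ref{height+} gives
\[
\heightW(s_i)=\heightW(s_{i-1}+F^ix_i)\leq C_0\max\bigl\{\heightW(s_{i-1}),\,\heightW(F^ix_i)\bigr\}\leq C_0\max\bigl\{C_0q^{i-1},\,q^i\bigr\},
\]
and this equals $C_0q^i$ precisely because of the hypothesis $q\geq C_0$. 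Setting $i=m$ gives $\heightW(\gamma)\leq C_0q^m\leq C_0q^{\ell-1}\leq C_0q^\ell$, as required.

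There is essentially no obstacle here; the whole point is that the hypothesis $q\geq C_0$ is tuned exactly so that the geometric-growth bound $q^i$ dominates the multiplicative distortion $C_0$ introduced by each application of the group law, preventing the bound from escalating to $C_0^\ell$.
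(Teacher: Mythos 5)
Your proof is correct and follows essentially the same route as the paper's: induction on the length of the $F$-expansion, peeling off the last digit, bounding $\heightW(F^i x_i)\leq q^i$ by the Frobenius action on coordinates, applying Lemma~\ref{height+}, and closing the induction via $q\geq C_0$. Your explicit partial-sum formulation $s_i$ is a slightly cleaner presentation but not a different argument.
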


\begin{proof}
We prove this by induction on $\ell$, the case of $\ell=1$ being our assumption that every element of $\Sigma$ has height one.
Suppose $\gamma\in\Sigma(\ell+1,F)$.
So
$$\gamma=x_0+Fx_1+\cdots+F^{\ell}x_{\ell}$$
where $x_0,\dots,x_\ell\in\Sigma$.
Writing $x_\ell=[a_0:\cdots:a_n]$ with each $a_i\in W$, we have that $F^\ell x_\ell=[a_0^{q^\ell}:\cdots:a_n^{q^\ell}]$ and $a_i^{q^\ell}\in W^{\langle q^\ell\rangle}\subseteq W^{q^\ell}$.
Hence, $\height_W(F^\ell x_\ell)\leq q^\ell$.
So
\begin{eqnarray*}
\height_W(\gamma)
&\leq&
C_0\max\{\height_W(x_0+\cdots+F^{\ell-1}x_{\ell-1}),\height_W(F^\ell x_\ell)\}\ \ \text{ by Lemma~\ref{height+}}\\
&\leq&
C_0\max\{C_0q^{\ell-1},q^\ell\}\ \ \text{ by induction}\\
&=&
C_0q^\ell \ \ \ \text{ as }q\geq C_0
\end{eqnarray*}
as desired.
\end{proof}

We are now ready to prove the proposition.

\begin{proof}[Proof of Proposition~\ref{bound}]
Write $G$ as a Zariski open subset of a Zariski closed subset $\overline G\subseteq \mathbb P^n$ over $\mathbb F_q$.
Fix a finite open cover $\{U_i:i=1,\dots,s\}$ of $G$, and for each $i,j\in \{1,\ldots ,s\}$ polynomials $P_0^{(i,j)},P_1^{(i,j)},\ldots ,P_n^{(i,j)}\in \mathbb F_q[x_0,\ldots ,x_n;y_0,\ldots,y_n]$, homogeneous in $(x_0,\dots,x_n)$ and in $(y_0,\dots, y_n)$, with no common zeros on $U_i\times U_j$, such that
for $(p,q)\in U_i\times U_j\subseteq\mathbb P^n\times\mathbb P^n$, we have 
$$p+q=[P_0^{(i,j)}(p,q):P_1^{(i,j)}(p,q):\cdots :P_n^{(i,j)}(p,q)].$$
Let $C_0$ be the maximum of the degrees of the $P_k^{(i,j)}$s.

Suppose $r>0$ is such that $q^r\geq C_0$ and fix a function field extension $K$ of $\mathbb F_{q^r}$ such that $X$ is over $K$, as well as a finite set $\Sigma\subseteq G(K)$.
Fix $h_1,\dots,h_m$ an $\mathbb F_{q^r}$-linear basis for $K$ over $K^{q^r}$, and denote by $\lambda_1,\dots,\lambda_m$ the corresponding (order~$1$) lambda functions.
We will be applying the above lemmas, but with $q^r$ replacing $q$ and $F^r$ replacing~$F$.

Using Lemma~\ref{R}, fix a finitely generated $\mathbb F_{q^r}$-subalgebra $R$ of $K$, such that
\begin{itemize}
\item
$h_1,\dots,h_m\in R$,
\item
$\operatorname{Frac}(R)=K$,
\item
$\lambda_i(R)\subseteq R$ for all $i=1,\dots,m$, and
\item
every element of $\Sigma$ has a representation with all co-ordinates in $R$.
 \end{itemize}
 Next, fix a finite dimensional $\mathbb F_{q^r}$-vector subspace $W$ of $R$ that contains $h_1,\dots,h_m$, as well as generators for $R$, and such that every elements of $\Sigma$ has a representation with all co-ordinates in $W$.
Let $\height_W$ be the corresponding height function on $R$ and on $\mathbb P^n(K)$  studied above.

Let $\overline X$ be the Zariski closure of $X$ in $\overline G$ so that $X=\overline X\cap G$, and $\overline X$ is given by homogeneous polynomials say $Q_1,\dots,Q_t\in K[x_0,\dots,x_n]$.

Now fix $\ell\geq 0$, and $\gamma\in\Sigma(\ell,F^r)$.
Suppose $\gamma\in U_j$.
Fixing $p\in U_i$ we have that $p\in X-\gamma$ if and only if $p+\gamma\in X$, that is, if and only if
$$Q_\nu\big(P_0^{(i,j)}(p,\gamma),\dots,P_n^{(i,j)}(p,\gamma)\big)=0$$
for all $\nu=1,\dots,t$.
That is, $(X-\gamma)\cap U_i$ is defined by the vanishing of
$$Q_{\nu,\gamma,i,j}(x):=Q_\nu\big(P_0^{(i,j)}(x,\gamma),\dots,P_n^{(i,j)}(x,\gamma)\big)$$
for $\nu=1,\dots,t$.
It follows by Lemma~\ref{lambda} that
$$(X-\gamma)^{q^{-\ell r}}(K)\cap U_i=\{x\in U_i(K):Q_{\nu,\gamma,i,j}^\lambda(x)=0\text{ for all }\nu=1,\dots,t,\lambda\in\Lambda(\ell)\}.$$

Note that the total degrees of the $Q_{\nu,\gamma,i,j}^\lambda$ are bounded independently of $\ell$ and~$\gamma$.
Indeed, if $C_1$ is the maximum of the total degrees of $Q_1,\dots,Q_t$, then $C_1C_0$ is such a bound.
So, in order to show that $\mathcal T_K$ is finite, it suffices to prove that there is a height bound for the coefficients of $Q_{\nu,\gamma,i,j}^\lambda$ that is independent of $\ell$ and $\gamma$.
(Note that $\nu, i, j$ range over finite sets.)
And in order to give a bound on $|\mathcal T_K|$ it suffices to give a bound on the height of these coefficients.

By Corollary~\ref{htsigma}, $\height_W (\gamma)\leq C_0q^{\ell r}$.
(This is where we use that $q^r\geq C_0$.)
The coefficients of $P_k^{(i,j)}(x,\gamma)$ therefore have $\height_W$ bounded by $C_0^2q^{\ell r}$.
If $C_2$ is the maximum of the heights of the coefficients of the $Q_\nu$, then we get that the coefficients of $Q_{\nu,\gamma,i,j}(x)$ have $\height_W$ bounded by $C_1C_0^2q^{\ell r}+C_2$. 
Next we analyze what happens when we apply $\lambda\in\Lambda(\ell)$ to these coefficients. 
Fixing $a\in R$ a coefficient of $Q_{\nu,\gamma,i,j}(x)$, we compute the heights of $\lambda_1(a),\dots,\lambda_m(a)$.
Since $\height_W(a)\leq C_1C_0^2q^{\ell r}+C_2$, letting $D\geq 0$ be is as in Lemma~\ref{lambdaW} but applied to $q^r$, that lemma implies $\height_W\big(\lambda_k(a)\big)\leq C_1C_0^2q^{(\ell-1) r}+\frac{C_2}{q^r}+D$ for all $k=1,\dots,m$.
Iterating $\ell$ times, we have that for all $\lambda\in\Lambda(\ell)$,
$$\height_W\big(\lambda(a)\big)\leq C_1C_0^2+\frac{C_2}{q^{\ell r}}+\frac{D}{q^{(\ell-1)r}}+\frac{D}{q^{(\ell-2)r}}+\cdots+D\leq C_1C_0^2+C_2+\frac{Dq^r}{q^r-1}$$
where the final inequality is by geometric series.
So the coefficients of all the $Q_{\nu,\gamma,i,j}^\lambda$ have height bounded by $C_1C_0^2+C_2+\frac{Dq^r}{q^r-1}$.
\end{proof}

\subsection{Proposition~\ref{bound} is effective}
\label{subsect-effectivity}
That is, a bound on $|\mathcal T_K|$ can be effectively determined from $r$ and $\Sigma$, along with defining equations for $G$ and $X$ as well as a finite presentation of~$K$.
This is more or less clear from the proof we have given, but we now summarise the justification for  this effectivity claim.

First of all, given $K=\operatorname{Frac}(S)$ with a finite presentation of $S:=\Fq[T_1,\dots,T_s]/I$, we can effectively determine a $K^q$-basis for $K$.
This is argued in Appendix~\ref{subsect-compK} below.
Let $1=h_1,\dots,h_m$ be such a basis.

Next, the construction of $R$ in Lemma~\ref{R} is effective.
Indeed, letting $c_1,\dots,c_s$ be the given generators for $S$ -- namely the indeterminates $T_i$ modulo $I$ -- the proof of that lemma effectively finds $g\in \Fq[h_1,\dots,h_m,c_1,\dots,c_s]$ such that $$R:=\Fq\left[h_1,\dots,h_m,c_1,\dots,c_s,\frac{1}{g}\right]$$ satisfies the lemma.

We thus have $R=\Fq[W]$ where $W$ is the $\mathbb F_q$-span of $h_1,\dots,h_m,c_1,\dots,c_s,\frac{1}{g}$.
We can work effectively with $\height_W$ because we can decide whether two elements of $R$ are the same.
Indeed, we can find $h\in R$ such that $R\subseteq S[\frac{1}{h}]$ and from the given finite presentation of $S$ we obtain a finite presentation of $S[\frac{1}{h}]$, so that deciding equality reduces to an ideal membership problem, which is testable by Gr\"obner bases.

Finally, the constant $D$ found in Lemma~\ref{lambdaW} can, by the proof of that lemma, be taken to be $\max\{\operatorname{ht}_W(v):v\in W^{\dim W(q-1)}\}$.
So it too is effectively determined. 

With these effective ingredients the proof we have given of Proposition~\ref{bound} gives explicit bounds on the total degrees and $\height_W$ of the coefficients of the polynomials that arise in the description of the elements of $\mathcal T_K$, thus yielding an effective bound on $|\mathcal T_K|$.\qed

\bigskip
\section{An automaton recognising $X\cap\Gamma$}
\label{sect-automaton}

\noindent
Fix
$G$ a commutative algebraic group over $\mathbb F_q$,
$F:G\to G$ the $q$-power Frobenius endomorphism,
$\Gamma\leq G$ a finitely generated $\zf$-submodule, and
$X\subseteq G$ is a closed subvariety defined over some field extension of $\mathbb F_q$.
By Proposition~\ref{explcitiwspan-iml} we have $r>0$ and $\Sigma\subseteq\Gamma$ a weak $F^r$-spanning set for $\Gamma$.
(We know by Theorem~\ref{spanning} that we can in fact choose $r$ and $\Sigma$ so that $\Sigma$ is an actual an $F^r$-spanning set, but we do not use this in the construction of our automaton, and the advantage of asking only for weak spanning sets is that $(r,\Sigma)$ can be effectively constructed.)

By the proof of~\cite[Lemma~5.7]{fsets-SML}, for every $m>0$, $\Sigma(m,F^r)$ is a weak $F^{rm}$-spanning set for $\Gamma$.
We may therefore assume that $r$ is sufficiently large so that there are defining polynomials over $\Fq$ for the multiplication on $G$ of degree less than $q^r$.
Hence Proposition~\ref{bound} applies.

Fix also a finitely generated extension $K$ of $\mathbb F_{q^r}$ such that $\Gamma\leq G(K)$ and $X$ is defined over $K$.

We wish to describe a finite automaton $\mathcal A$ on the alphabet $\Sigma$ such that $w\in\Sigma^*$ is accepted by $\mathcal A$ if and only if $[w]_{F^r}\in X\cap\Gamma$.
The set of states of $\mathcal A$ will be the $\mathcal T_K$ of Proposition~\ref{bound}.
The initial state will be $X(K)$, which corresponds to $\gamma=0$ and $\ell=0$.
The accepting states are those sets $(X-\gamma)^{q^{-\ell r}}(K)\in\mathcal T_K$ which contain~$0$.
Here's the transition rule: if the machine is in state $(X-\gamma)^{q^{-\ell r}}(K)$ and reads the letter $x$ then it should move to state $(X-\gamma -F^{\ell r} x)^{q^{-(\ell+1)r}}(K)$.
Note that $\gamma+F^{\ell r} x\in\Sigma(\ell+1,F^r)$, so that this is indeed a state of $\mathcal A$.
The following shows that the rule is well-defined.

\begin{lemma}
For any $x\in G(K)$, subvarieties $V$ and $W$ of $G$ over $K$, and $\ell,\ell'\geq 0$, if $V^{q^{-\ell}}(K)=W^{q^{-\ell'}}(K)$ then $(V-F^\ell x)^{q^{-\ell-1}}(K)=(W-F^{\ell'}x)^{q^{-\ell'-1}}(K)$.
\end{lemma}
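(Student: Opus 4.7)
The plan is to reduce the claim to a single bookkeeping identity relating the $K$-points of a Frobenius pullback to preimages under $F^n$. First, I would establish that for any subvariety $Y$ of $G$ defined over $K$ and any integer $n \geq 0$,
$$Y^{q^{-n}}(K) = \{p \in G(K) : F^n(p) \in Y(K)\}.$$
This is immediate from the definition: working in an affine chart, $Y$ is cut out by polynomials $P$ with coefficients in $K$, so $Y^{q^{-n}}$ is cut out by the polynomials $P^{1/q^n}$ obtained by extracting $q^n$-th roots of the coefficients, and for $p$ with entries in $K$ one has $P^{1/q^n}(p)^{q^n} = P(F^n(p))$, whence $P^{1/q^n}(p) = 0$ iff $P(F^n(p)) = 0$.

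With this identity available, I would apply it symmetrically to both sides of the claimed equality. A point $p \in G(K)$ lies in $(V - F^\ell x)^{q^{-\ell-1}}(K)$ iff $F^{\ell+1}(p) + F^\ell x \in V(K)$. Since $F$ is a group endomorphism, $F^{\ell+1}(p) + F^\ell x = F^\ell\bigl(F(p)+x\bigr)$, and the key identity then translates this condition to $F(p) + x \in V^{q^{-\ell}}(K)$. The identical calculation on the other side shows that $p \in (W - F^{\ell'} x)^{q^{-\ell'-1}}(K)$ iff $F(p) + x \in W^{q^{-\ell'}}(K)$. The hypothesis $V^{q^{-\ell}}(K) = W^{q^{-\ell'}}(K)$ then forces the two sets to coincide.

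There is no real obstacle here: the content of the lemma is essentially the compatibility of Frobenius pullback with translation by an element lying in the image of $F^\ell$, which becomes transparent once expressed through the preimage description above. The only point requiring any attention is the initial identity, which is a direct coordinate computation.
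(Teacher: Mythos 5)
Your proof is correct and takes essentially the same route as the paper: the paper's argument is a chain of biconditionals that implicitly use, at each step, exactly your key identity $Y^{q^{-n}}(K)=\{p\in G(K):F^n(p)\in Y(K)\}$, together with the group-endomorphism rewrite $F^{\ell+1}(p)+F^\ell x=F^\ell(F(p)+x)$. Your only difference is that you isolate and justify the identity up front rather than leaving it implicit, which is a mild improvement in exposition, not a different argument.
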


\begin{proof}
Fix $a\in G(K)$.
Then
\begin{eqnarray*}
a\in (V-F^\ell x)^{q^{-\ell-1}}
&\iff&
F^{\ell+1}a\in V-F^\ell x \ \ \ \\
&\iff&
F^\ell(Fa+x)\in V\\
&\iff&
Fa+x\in V^{q^{-\ell}}\\
&\iff&
Fa+x\in W^{q^{-\ell'}} \ \text{ by assumption and as $Fa+x\in G(K)$}\\
&\iff& 
F^{\ell'}(Fa+x)\in W\\
&\iff&
F^{\ell'+1}a\in W-F^{\ell'} x\\
&\iff&
a\in (W-F^{\ell'} x)^{q^{-\ell'-1}}
\end{eqnarray*}
as desired.
\end{proof}

So the transition function is well-defined, and we have a finite automaton.
It remains to verify that it does what we want.

\begin{lemma}
\label{l=la}
The automaton $\mathcal A$ accepts exactly those words $w\in \Sigma^*$ such that $[w]_{F^r}\in X\cap\Gamma$.
\end{lemma}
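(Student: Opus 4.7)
The plan is to proceed by a straightforward induction on the length of $w$, tracking the state of $\mathcal A$ as it processes the input.

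First, I would prove by induction on $m \geq 0$ that after reading a word $w = x_0 x_1 \cdots x_m \in \Sigma^*$, the automaton is in state $(X - [w]_{F^r})^{q^{-(m+1)r}}(K)$, and that the initial state (corresponding to the empty word) is $X(K) = (X - [\varepsilon]_{F^r})^{q^{0}}(K)$. The base case is precisely the definition of the initial state. For the inductive step, suppose after reading $x_0 \cdots x_{m-1}$ the machine is in state $(X - [x_0\cdots x_{m-1}]_{F^r})^{q^{-mr}}(K)$; by the transition rule, reading $x_m$ moves it to
$$\bigl(X - [x_0\cdots x_{m-1}]_{F^r} - F^{mr} x_m\bigr)^{q^{-(m+1)r}}(K),$$
and since $[x_0 \cdots x_m]_{F^r} = [x_0 \cdots x_{m-1}]_{F^r} + F^{mr} x_m$, this is exactly the required form. (The preceding lemma ensures that the value of this state does not depend on which representative $(\gamma,\ell)$ we use for the prior state.)

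Next, I would unwind the acceptance condition. By construction $\mathcal A$ accepts $w$ iff $0$ lies in the state reached after reading $w$, i.e.\ iff $0 \in (X - [w]_{F^r})^{q^{-(m+1)r}}(K)$. But by the very definition of the Frobenius pullback of a variety, for any subvariety $V \subseteq G$ and any point $p \in G(K)$ we have $p \in V^{q^{-N}}(K)$ iff $F^N(p) \in V$. Applying this with $V = X - [w]_{F^r}$, $N = (m+1)r$, and $p = 0$, and using that $F$ fixes $0$, we conclude that acceptance is equivalent to $0 \in X - [w]_{F^r}$, i.e.\ $[w]_{F^r} \in X$.

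Finally, to upgrade membership in $X$ to membership in $X \cap \Gamma$, I observe that $\Sigma \subseteq \Gamma$ (since $\Sigma$ is a weak $F^r$-spanning set for $\Gamma$) and that $\Gamma$, being a $\zf$-submodule of $G$, is closed under $F^r$ and under addition; hence $[w]_{F^r} \in \Gamma$ automatically for every $w \in \Sigma^*$. Combining the three steps yields that $\mathcal A$ accepts $w$ iff $[w]_{F^r} \in X \cap \Gamma$. There is no real obstacle here: the proof is a mechanical verification, with the only nontrivial ingredient being the well-definedness of the transition rule, which is handled by the lemma immediately preceding the statement.
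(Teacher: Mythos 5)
Your proof is correct and follows essentially the same route as the paper's: track the state of $\mathcal A$ inductively via the transition rule (with well-definedness from the preceding lemma), observe that acceptance of $w$ means $0$ lies in the state $(X-[w]_{F^r})^{q^{-|w|r}}(K)$, unwind the Frobenius pullback using $F(0)=0$ to get $[w]_{F^r}\in X$, and note $[w]_{F^r}\in\Gamma$ automatically since $\Sigma\subseteq\Gamma$. The paper leaves the induction implicit and compresses the argument into a short chain of equivalences, but the content is identical.
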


\begin{proof}
Suppose that $w=x_0x_1\cdots x_{\ell-1}$ for some $\ell\geq 0$.
Then
\begin{eqnarray*}
w\text{ is accepted}
&\iff&
0\in (X-x_0-F^rx_1-\cdots-F^{(\ell-1)r} x_{\ell-1})^{q^{-\ell r}}(K)\\
&\iff&
0\in X-x_0-F^rx_1-\cdots-F^{(\ell-1)r} x_{\ell-1}\ \ \text{ as $0\in G(\mathbb F_q)$}\\
&\iff&
[w]_{F^r}\in X.
\end{eqnarray*}
Since $[w]_{F^r}\in\Gamma$ always, this is as desired.
\end{proof}

We have thus proved:

\begin{theorem}
\label{effective-iml}
Let $G$ be a commutative algebraic group defined over a finite field~$\mathbb F_q$, 
let $F:G\to G$ be the $q$-power Frobenius, let $X\subseteq G$ be a closed subvariety defined over a field extension of $\mathbb F_q$, and let $\Gamma\leq G$ be a finitely generated $\bZ[F]$-submodule. 
Suppose $(r,\Sigma,\mathcal A)$ are such that
\begin{itemize}
\item
 $q^r$ is an upper bound on the total degree of a given defining set of polynomials over $\Fq$ for the group multiplication on $G$,
\item
$\Sigma$ is a weak $F^r$-spanning set for $\Gamma$,
\item
and $\mathcal A$ is the automaton described above.
\end{itemize}
Then $X\cap\Gamma=[\mathcal L]_{F^r}$ where $\mathcal L\subseteq \Sigma^*$ is the language recognised by $\mathcal A$.
\end{theorem}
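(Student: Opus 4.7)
The plan is to observe that the statement is essentially a consolidation of the results already established in the section, so the proof amounts to assembling the pieces and verifying that everything is consistent. The three inputs enumerated in the theorem correspond precisely to the three data items that go into the construction of $\mathcal A$: the bound on $q^r$ ensures that Proposition~\ref{bound} can be invoked with parameter $r$; the weak $F^r$-spanning set $\Sigma$ gives an alphabet for which $[\Sigma^*]_{F^r}=\Gamma$ (this uses only property~(i) of a weak spanning set, which the weak version still satisfies); and $\mathcal A$ is the automaton whose state set is the finite set $\mathcal T_K$ from Proposition~\ref{bound}.

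First I would verify that $\mathcal A$ really is a finite automaton on the alphabet $\Sigma$. Finiteness of the state set is exactly the content of Proposition~\ref{bound}, which applies because of the hypothesis on $q^r$; the initial state $X(K)=(X-0)^{q^{-0 \cdot r}}(K)$ and the accepting states (those $(X-\gamma)^{q^{-\ell r}}(K)\in\mathcal T_K$ containing $0$) are well-defined elements of $\mathcal T_K$; and the transition rule sending $(X-\gamma)^{q^{-\ell r}}(K)$ on input $x\in\Sigma$ to $(X-\gamma-F^{\ell r}x)^{q^{-(\ell+1)r}}(K)$ is well-defined by the unnamed lemma just before Lemma~\ref{l=la} (which shows that the image depends only on the state, not on the particular representative $(\gamma,\ell)$). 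Moreover $\gamma+F^{\ell r}x\in\Sigma(\ell+1,F^r)$, so the target is indeed in $\mathcal T_K$.

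Next, given any word $w=x_0x_1\cdots x_{\ell-1}\in\Sigma^*$, a straightforward induction on $\ell$ using the transition rule shows that after reading $w$ the automaton is in state $(X-[w]_{F^r})^{q^{-\ell r}}(K)$, with the convention that the empty word leaves $\mathcal A$ in its initial state $X(K)$. Lemma~\ref{l=la} then identifies acceptance of $w$ with the condition $[w]_{F^r}\in X$, noting that $[w]_{F^r}\in\Gamma$ holds automatically because $\Sigma\subseteq\Gamma$. Therefore $[\mathcal L]_{F^r}=\{[w]_{F^r}:w\in\mathcal L\}=\{\gamma\in\Gamma:\gamma\in X\}=X\cap\Gamma$, where the middle equality uses that every element of $\Gamma$ has at least one $F^r$-expansion in $\Sigma^*$ since $\Sigma$ is a (weak) $F^r$-spanning set.

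There is essentially no hard step remaining; all the real work has been done in Proposition~\ref{bound} (finiteness of $\mathcal T_K$) and in Lemma~\ref{l=la} (correctness of the acceptance condition). The only thing to be slightly careful about is that we use only the spanning property~(i) from Definition~\ref{defnspan} in this proof, so the weaker hypothesis that $\Sigma$ is merely a \emph{weak} $F^r$-spanning set is genuinely enough; this is what makes the whole construction effective, via Proposition~\ref{explcitiwspan-iml}.
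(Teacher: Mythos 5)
Your proof is correct and follows essentially the same route as the paper's, which simply cites Lemma~\ref{l=la} for the identification $\mathcal L=\{w\in\Sigma^*:[w]_{F^r}\in X\cap\Gamma\}$ and then uses property~(i) of a weak $F^r$-spanning set to get $[\Sigma^*]_{F^r}=\Gamma$. The extra material you include (well-definedness of $\mathcal A$ via Proposition~\ref{bound} and the unnamed lemma, the induction on word length) is part of the construction laid out before the theorem statement rather than part of the proof itself, but spelling it out here is harmless and correct.
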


\begin{proof}
By Lemma~\ref{l=la},
$\mathcal L=\{w\in \Sigma^*:[w]_{F^r}\in X\cap\Gamma\}$.
Since $\Sigma$ is a weak $F^r$-spanning set, $[\Sigma^*]_{F^r}=\Gamma$, and so $X\cap\Gamma=[\mathcal L]_{F^r}$, as desired.
\end{proof}

As a consequence we can deduce the following generalisation of Corollary~\ref{iml-auto}.

\begin{corollary}
\label{iml-fauto-module}
Suppose $G$ is a commutative algebraic group over~$\mathbb F_q$,
$F:G\to G$ is the $q$-power Frobenius,
$X\subseteq G$ is a closed subvariety defined over a field extension of $\mathbb F_q$,
and $\Gamma\leq G$ is a finitely generated $\bZ[F]$-submodule. 
Then $X\cap \Gamma$ is $F$-automatic.
\end{corollary}

\begin{proof}
By Theorem~\ref{spanning}, we have an $r>0$ and an $F^r$-spanning set $\Sigma$ for $\Gamma$.
By~\cite[Lemma~5.7]{fsets-SML}, for every $m>0$, $\Sigma(m,F^r)$ is an $F^{rm}$-spanning set.
We may therefore assume that $r$ is sufficiently large so that there are defining equations for the multiplication on $G$ of degree less than $q^r$.
Let $\mathcal A$ be the automaton constructed above with this $\Sigma$.
Now apply Theorem~\ref{effective-iml} to $(r,\Sigma,\mathcal A)$ so that $X\cap\Gamma=[\mathcal L]_{F^r}$.
But Proposition~6.8(b) of~\cite{fsets-SML} tells us that the expansion of a regular language on an alphabet that is a spanning set is $F$-automatic.
Hence, as $\Sigma$ is an actual $F^r$-spanning set, and not just a weak one,  $[\mathcal L]_{F^r}$ is $F$-automatic.
\end{proof}

\medskip
\subsection{Effectivity}
\label{subsect-effectiveML}
Corollary~\ref{iml-fauto-module} is more general than Corollary~\ref{iml-auto} in that $\Gamma$ is only assumed to be a finitely generated $\zf$-submodule rather than an ($F$-invariant) finitely generated subgroup.
But the real gain here is that we obtain an effective description of $X\cap \Gamma$ in Theorem~\ref{effective-iml}.
Let us reiterate the grounds for this effectivity claim.
We are given the following data:
\begin{itemize}
\item
defining polynomials for the algebraic group $G$ and the subvariety $X$, embedded as locally Zariski closed subsets of $\mathbb P^n$
\item
generators for the $\zf$-submodue $\Gamma$, and
\item
a finite presentation of~$K$ over $\Fq$.
\end{itemize}
From $n$ and the generators of $\Gamma$, Proposition~\ref{explcitiwspan-iml} gives us an effectively bounded $r>0$ and an explicit defining expression for a weak $F^r$-spanning set $\Sigma$.
Then, from $(r,\Sigma)$ together with the given presentations of $G,X,$ and $K$, Proposition~\ref{bound} gives us an effective bound on the size of the set $\mathcal T_K$ which are the states of our automaton $\mathcal A$.
The automaton itself is then explicitly constructed.
Finally Theorem~\ref{effective-iml} describes $X\cap\Gamma$ as the set of $F^r$-expansions of the words on $\Sigma$ accepted by $\mathcal A$.

\bigskip
\section{Deciding rational points on subvarieties of
isotrivial abelian varieties}
\label{sect-decide}
\noindent
To illustrate the usefulness of the effective description of $X\cap\Gamma$ given by Theorem~\ref{effective-iml}, we now solve some natural decision problems in the arithmetic geometry of abelian varieties over finite fields.
Fix an abelian variety $G$ over a finite field $\Fq$, a function field extension $K$ of $\Fq$, and a closed subvariety $X\subseteq G$ defined over~$K$.
Understanding the set of rational points $X(K)$ is a fundamental problem in diophantine geometry.
Given presentations of $G, X$, and $K$, we will give decision procedures for the following three questions:
Is $X(K)$ empty? Is it infinite? Does it contain a coset of an infinite subgroup of~$G$?

Note that Theorem~\ref{effective-iml} does apply to this context as $X(K)=X\cap\Gamma$ where $\Gamma:=G(K)$ is (by Lang-N\'eron) a finitely generated $F$-invariant subgroup of $G$.
(As usual $F:G\to G$ denotes the $q$-power Frobenius.)

\medskip
\subsection{Finding generators for $G(K)$}
\label{subsect-gkgen}
The effectivity of our description of $X\cap\Gamma$ required as input also generators for $\Gamma$.
In this case, when $\Gamma$ is the set of rational points on an isotrivial abelian variety, there is already an algorithm for computing generators of $G(K)$.
Indeed, such an algorithm exists whenever the Tate-Shafarevich group is finite (see, for example,~\cite[Chapter~X]{JS}, for the case of number fields).
That the the Tate-Shafarevich group of an abelian variety over a finite field is finite is a theorem of Milne~\cite{milne68}.

We suppose, therefore, that we have computed generators $\gamma_1,\dots, \gamma_n$ for $G(K)$.

\medskip
\subsection{Whether or not $X(K)$ is empty is decidable.}
From the generators for $G(K)$, Proposition~\ref{explcitiwspan-iml} gives us an explicit $r$ and $\Sigma$ such that $\Sigma$ is a weak $F^r$-spanning set for $G(K)$.
Let $\mathcal A$ be the automaton built in~$\S$\ref{sect-automaton} from $(r,\Sigma)$.
Theorem~\ref{effective-iml} tells us that $X(K)=[\mathcal L]_{F^r}$ where $\mathcal L\subseteq \Sigma^*$ is the language recognised by $\mathcal A$.
So $X(K)$ is nonempty if and only if there is in $\mathcal A$ a path from the initial state to an accepting state.
This is a decidable property of the shape of the automaton $\mathcal A$.
\qed

\medskip
\subsection{Whether or not $X(K)$ is infinite is decidable.}
\label{decide-infinite}
This requires a certain refinement of the automaton built in~$\S$\ref{sect-automaton}.
We make use of the following lemma that allows us to reduce any given regular language modulo an $F$-automatic equivalence relation.
In fact we only need the lemma for the trivial equivalence relation of equality right now, but in~$\S$\ref{sect-iml=mod} below we will apply it to other equivalence relations, and so we do it in generality.

\begin{lemma}
\label{unamb} 
Suppose $F$ is an injective endomorphism of an abelian group $M$, and $\Sigma\subseteq M$ is a finite set.
Suppose $\sim$ is an equivalence relation on $M$ such that $\mathcal G:=\{(v,w)\in(\Sigma\times \Sigma)^* : [v]_F\sim [w]_F\}$ is regular.
Then there is a regular language $\mathcal L_0\subseteq\Sigma^*$ such that for every $w\in \Sigma^*$ there is a unique $v\in \mathcal L_0$ with $|v|\leq |w|$ and such that $[v]_F\sim [w]_F$.
\label{unambiguous}
\end{lemma}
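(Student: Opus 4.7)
The plan is to take $\mathcal{L}_0$ to be the language of canonical representatives for the $\sim$-classes: after fixing an arbitrary total order on $\Sigma$, declare $v \in \mathcal{L}_0$ iff $v$ is lex-minimal among the shortest $\Sigma^*$-representatives of its $\sim$-class. Equivalently, $v \in \mathcal{L}_0$ precisely when (i) no $v' \in \Sigma^*$ with $|v'| < |v|$ has $[v']_F \sim [v]_F$, and (ii) no $v' \in \Sigma^*$ with $|v'| = |v|$ and $v' <_{\mathrm{lex}} v$ has $[v']_F \sim [v]_F$.

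The set-theoretic content of the lemma is immediate for this choice. Given $w \in \Sigma^*$, the nonempty set $\{u \in \Sigma^* : [u]_F \sim [w]_F\}$ has a well-defined lex-minimum $v$ among its minimal-length members; then $v \in \mathcal{L}_0$, $[v]_F \sim [w]_F$, and $|v| \le |w|$. Uniqueness follows because any two elements of $\mathcal{L}_0$ in the same $\sim$-class with length $\le |w|$ must, by (i), have equal length, and then, by (ii), coincide.

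The technical heart of the proof is showing $\mathcal{L}_0$ is regular. Condition (ii) is the easier half: the relation $v' <_{\mathrm{lex}} v$ is regular on $(\Sigma \times \Sigma)^*$, so intersecting it with $\mathcal{G}$ and projecting to the second coordinate yields a regular language $\mathcal{L}_2$ whose complement captures (ii). Condition (i) is where the main obstacle lies, since $\mathcal{G}$ is expressed as an equal-length relation while (i) is inherently about strictly unequal lengths. I plan to resolve this with a zero-padding trick: after (if necessary) enlarging $\Sigma$ to include $0 \in M$ and extending $\mathcal{G}$ accordingly---this preserves regularity because appending or removing trailing $0$s leaves $F$-expansions unchanged, so the extended relation is obtained by a simple transduction from the original---a word $v$ has a strictly shorter $\sim$-equivalent iff there is some $u \in \Sigma^{|v|}$ ending in $0$ with $(u,v) \in \mathcal{G}$. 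The set of such pairs is the intersection of $\mathcal{G}$ with the regular condition ``first coordinate ends in $0$'', so its projection onto the second coordinate, call it $\mathcal{L}_1$, is regular.

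Setting $\mathcal{L}_0 := \Sigma^* \setminus (\mathcal{L}_1 \cup \mathcal{L}_2)$ then yields a regular language by Boolean closure. The one delicate point is the zero-padding argument, which I expect to be the main thing to write up carefully; beyond that, the construction is a routine application of closure properties of regular languages together with the hypothesized regularity of $\mathcal{G}$.
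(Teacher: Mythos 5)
Your proposal is correct and follows essentially the same route as the paper: the core mechanism in both is the zero-padding trick (a word has a strictly shorter $\sim$-equivalent if and only if some same-length $\sim$-equivalent ending in $0$ exists), combined with Boolean closure and projection for regular languages. The only organizational difference is that the paper folds your conditions (i) and (ii) into a single condition by ordering $\Sigma^*$ first by length and then lexicographically \emph{right-to-left} with $0$ least, so that padding a word with trailing $0$s makes it $\prec$-smaller; it then takes $\mathcal L_0$ to be the words not ``beaten'' by a same-length $\prec$-smaller equivalent and not ending in $0$. Your separation into two sub-conditions is a touch more transparent but yields the same set.

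One caveat: the aside about enlarging $\Sigma$ to include $0$ via ``a simple transduction'' is not actually sound. A word over $\Sigma\cup\{0\}$ may contain $0$ in interior positions, and the hypothesis that $\mathcal G$ is regular on $(\Sigma\times\Sigma)^*$ gives no information about $F$-expansions of such words, so there is no obvious way to recover the enlarged relation $\mathcal G'$ from $\mathcal G$. Both your proof and the paper's really do need $0\in\Sigma$ (the paper assumes it silently when it declares $0$ to be $\prec$-least and defines $\mathcal L_0$ by ``does not end in $0$''). This is harmless in context, since the lemma is only applied to weak $F$-spanning sets, which contain $0$ by Definition~\ref{defnspan}(ii), but the claim that the hypothesis can be relaxed by a transduction should be dropped.
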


\begin{proof}
Note that we are identifying $(\Sigma\times\Sigma)^*$ with the $\{(v,w)\in\Sigma^*\times\Sigma^*:|v|=|w|\}$ in the natural way.

Fix a total ordering on $\Sigma$ in which $0$ is least, and let $\prec$ denote the induced total ordering on $\Sigma^*$ that first orders by length and then within a given length orders lexicographically reading right to left.
It is not hard to construct the automaton witnessing that
$\mathcal F:=\{(v,w)\in (\Sigma\times \Sigma)^* : v\prec w\}$
 is regular.

Let $\mathcal{E}$ be the image of $\mathcal G\cap\mathcal F$ under projection onto the second coordinate.
Observe that $\mathcal{E}$ is the set of words $w$ for which there is some $v\prec w$ of the same length such that $[v]_F\sim [w]_F$.
Let $\mathcal L_0$ be the regular language made up of words that are not in $\mathcal{E}$ and that do not end in $0$.
We show $\mathcal L_0$ works.

Suppose $w\in\Sigma^*$.
Let $u\in\Sigma^*$ be shortest such that $[u]_F\sim [w]_F$.
If $u\notin\mathcal L_0$ then, as $u$ does not end in a $0$, it must be that $u\in\mathcal E$.
Hence there exists $v\prec u$ of the same length with $[v]_F\sim [u]_F$.
Letting $v$ be $\prec$-least such we have that $v\notin\mathcal E$.
On the other hand, $v$ cannot end in $0$ as if $v=v'0$ then $v'$ would contradict the fact that $u$ was chosen of minimal length.
Hence $v\in \mathcal L_0$, as desired.

For uniqueness, suppose, toward a contradiction, that we have distinct $v_1,v_2\in\mathcal L_0$ with $[v_1]_F\sim[v_2]_F$.
We may assume that $|v_1|\leq|v_2|$.
If $|v_1|=|v_2|$ then we may assume that $v_1\prec v_2$.
Let $n\geq 0$ be such that $|v_10^n|=|v_2|$.
As $v_2$ does not end in~$0$, we must have that $v_10^n\prec v_2$.
So $v_2\in\mathcal E$, contradicting $v_2\in\mathcal L_0$.
\end{proof}

\begin{remark}
If the equivalence relation $\sim$ is just equality then~\cite[Lemma~6.7(a)]{fsets-SML} gives that $\mathcal G$ is regular -- and indeed it explicitly constructs an automaton recognising~$\mathcal G$ -- and hence the above lemma applied to a weak $F$-spanning set $\Sigma$ gives us a regular language in which every element of $M$ has a unique base $F$ representation.
\end{remark}

We are now ready to decide the problem of whether $X(K)$ is infinite.
As before, we first produce $(r,\Sigma,\mathcal A)$ such that $\Sigma$ is a weak $F^r$-spanning set for $G(K)$ and  $\mathcal A$ is the automaton built in~$\S$\ref{sect-automaton} from $(r,\Sigma)$.
Let  $\mathcal L\subseteq \Sigma^*$ be the language recognised by~$\mathcal A$.
So $X(K)=[\mathcal L]_{F^r}$ by Theorem~\ref{effective-iml}.
Let $\mathcal L_0\subseteq \Sigma^*$ be the regular language given by Lemma~\ref{unamb} applied to $(G(K), F^r)$ with the trivial equivalence relation (namely, equality).
So $\mathcal L_0$ is regular, and indeed, by following the proof of Lemma~\ref{unamb}, one can explicitly construct an automaton $\mathcal A_0$ that recognises it.
Let $\mathcal L':=\mathcal L\cap\mathcal L_0$ and let $\mathcal A'$ be the automaton obtained from $\mathcal A$ and $\mathcal A_0$ that recognises $\mathcal L'$.
By construction, $w\mapsto[w]_{F^r}$ is a bijection between $\mathcal L'$ and $X(K)$.
So $X(K)$ is infinite if and only if $\mathcal L'$ is.
By the pumping lemma, $\mathcal L'$ is infinite if and only if there exist strings $u,v,w\in \Sigma^*$ with $|v|\ge 1$ such that $uv^i w\in \mathcal L'$ for all $i\ge 0$.
See, for example, \cite[Lemma 4.2.1]{shallit}.
This is something decidable about the shape of the automaton recognising $\mathcal L'$; namely $\mathcal L'$ is infinite if and only if there is a path from the initial state of $\mathcal A'$ to an accepting state that includes a nontrivial loop.
\qed

\medskip
\subsection{Finding an actual spanning set for $G(K)$}
\label{subsect-effectivespan}
In order to deal with our last decision problem -- whether or not $X(K)$ contains an infinite coset -- weak spanning sets will not be sufficient.
This is because we will be making use of some recent results of Christopher Hawthorne~\cite{hawthorne} that require actual spanning sets.
Fortunately, using methods from~\cite{fsets-SML}, we can give an effective algorithm for producing a spanning set for $G(K)$.
(Our method does not extend to arbitrary $\zf$-submodules of $G(K)$, and that is why we only obtained weak spanning sets in~$\S$\ref{sect-ewss}.)
We give some details.

In~$\S$\ref{subsect-effectivity} we showed how to effectively construct an $\Fq$-algebra $R$ with an $\Fq$-subspace $W$ such that $R=\Fq[W]$ and $K=\operatorname{Frac}(R)$.
It was also explained how we can then work effectively with the corresponding height function $\height_W$ given by Definition~\ref{ht0}.
For each positive integer $N$, set
$$\Sigma_N:=\{x\in G(K):\height_W(x)\leq N\text{ or }\height_W(-x)\leq N\}.$$
It is shown in~\cite[Proposition~5.8]{fsets-SML} that this set will be an $F^r$-spanning set for some~$r$ and sufficiently large $N$.
We need to produce such $r$ and $N$ effectively.

In fact, the proof of~\cite[Proposition~5.8]{fsets-SML} tells us what $r$ should be: the requirement is that $6C^6\leq D^r$ where  $C,D>1$ are integers satisfying:
\begin{eqnarray}
\label{d1}
\height_W(x+y)&\leq&C(\height_W(x)+\height_W(y))\text{ for all }x,y\in G(K),\\
\label{d2}
\height_W(-x)&\leq& C\height_W(x)\text{ for all }x\in G(K),\\
\label{cz}
\height_W(F(x))&\geq& D\height_W(x)\text{ for all but finitely many }x\in G(K).
\end{eqnarray}

\begin{remark}
Note that in our setting the $\kappa$ of~\cite[Proposition~5.8]{fsets-SML} is $0$ and hence the conditions~(i)--(iii) in the proof there are consequences of $6C^6\leq D^r$.
\end{remark}

So to find $r$ we need to effectively find such $C$ and $D$.
The proof of~\cite[Corollary~5.9]{fsets-SML} shows that~(\ref{d1}) and~(\ref{d2}) hold with $C$ being the maximum of the degrees of the given polynomials defining the group law and inverse on $G$.
We now describe how to find~$D$ witnessing~(\ref{cz}).

First, recall that, for each positive integer $\ell$, we use $W^\ell$ to denote be the $\Fq$-vector space spanned by the $\ell$-fold products of elements from $W$, while we use $W^{\langle q\rangle}$ to denote the $\Fq$-vector space of $q$-powers of elements from $W$.

By construction (see Lemma~\ref{R}), we have a fixed $K^q$-basis $h_1,\dots,h_m$ for $K$ such that $R=\Fq[W]=\sum_{j=1}^m\Fq[W]^qh_j$.
Letting $\lambda_1,\dots,\lambda_m$ denote the operators corresponding to $h_1,\dots,h_m$,
let
$$t_0:=\max\{\height_W(\lambda_j(v)):v\in W^{\dim W(q-1)}, j=1,\dots,m\}.$$
It follows that
$$W^{\dim W(q-1)}\subseteq\sum_{j=1}^m(W^{t_0})^{\langle q\rangle}h_j.$$
The proof of Lemma~\ref{lambdaW} then shows that
$$W^t\subseteq\sum_{j=1}^m(W^{\lfloor\frac{t}{q}\rfloor+t_0})^{\langle q\rangle}h_j$$
for all $t\geq 0$.
Letting $t_1:=\lceil \frac{t_0q+q}{q-1}\rceil$ one computes that $\frac{t_1\delta}{q}+t_0\leq(t_1-1)\delta$ for all $\delta>0$, and
hence
$$W^{t_1\delta}\subseteq\sum_{j=1}^m(W^{(t_1-1)\delta})^{\langle q\rangle}h_j$$
for all $\delta>0$.
Letting $D:=\frac{t_1^2}{t_1^2-1}$ and $Z_0:=\{z\in \Fq[W]:\height_W(z)\leq t_1^2+t_1\}$, the proof of~\cite[Corollary~5.9]{fsets-SML} then shows that
$\height_W(x^q)\geq D\height_W(x)$ for all $x\in \Fq[W]\setminus Z_0$.
Hence $D$ witnesses~(\ref{cz}), with the finite exceptional set $Z$ being those elements of $G(K)$ that have a homogeneous representation of the form $[x_0:\cdots:x_n]$ with each $x_i\in Z_0$.

Having found an $r$ that works, the proof of \cite[Proposition~5.8]{fsets-SML} also tells us how large $N$ has to be: it must be big enough so that $\Sigma_N$ contains an exceptional set for~(\ref{cz}) as well as a complete set of representatives for $G\big(K^{q^r}\big)$ in $G(K)$.
We have already effectively produced an exceptional set for~(\ref{cz}), namely $Z$.
For representatives of $G\big(K^{q^r}\big)$ in $G(K)$ we make use of the set of generators $\gamma_1,\dots,\gamma_n$ for $G(K)$ computed in~$\S$\ref{subsect-gkgen}.
From the fact that multiplication by $q^r$ on $G$ is of the form $F^r\circ V^r$ where $V$ is the Verschiebung, we get that $[q^r]G(K)\leq G(K^{q^r})$.
Hence $\{[\ell]\gamma_i:1\leq i\leq n, 0\leq \ell\leq q^r\}$ is a complete set of representatives for $G\big(K^{q^r}\big)$ in~$G(K)$.
Taking $N$ to be greater than the height of all these elements, as well as all the elements of $Z$, we have that $\Sigma_N$ is an $F^r$-spanning set for $G(K)$.

\medskip
\subsection{Whether or not $X(K)$ contains an infinite coset is decidable.}
\label{decide-coset}
The strategy for deciding this will be as follows:
We have just computed $(r,\Sigma)$ such that $\Sigma$ is an $F^r$-spanning set for $G(K)$.
As in~$\S$\ref{decide-infinite}, we can then produce a regular $\mathcal L'\subseteq\Sigma^*$ recognised by an explicitly constructed automaton $\mathcal A'$, such that $w\mapsto[w]_{F^r}$ is a bijection between $\mathcal L'$ and $X(K)$.
We saw there that the infinitude of $\mathcal L'$ was reflected in the shape of $\mathcal A'$.
Similarly, we will show that $X(K)$ contains an infinite coset if and only if $\mathcal L'$ is not ``sparse", and that sparsity can be read off from~$\mathcal A'$.

First, we need to recall what it means for a language to be sparse.
A sublanguage $\mathcal L\subseteq\Sigma^*$ is said to be {\em sparse} if it is regular and the number of words in $\mathcal L$ of length $n$ is $O(n^d)$ as $n$ grows, for some $d\geq 0$.
A list of equivalent formulations that we will make use of is compiled in~\cite[Proposition~7.1]{fsets-SML}.
Sparsity in the general setting of $F$-automatic sets has been studied further recently by Hawthorne in~\cite{hawthorne}, from which we will use the following two results:

\begin{fact}[Corollary~5.12 of~\cite{hawthorne}]
\label{robust-sparse}
Suppose $(M,F)$ is an abelian group equipped with an injective endomorphism and $S\subseteq M$ is $F$-automatic.
The following are equivalent conditions on~$S$:
\begin{itemize}
\item[(i)]
There exists an $r>0$, an $F^r$-spanning set $\Sigma$  for $M$, and a sparse sublanguage $\mathcal L\subseteq\Sigma^*$, such that $S=[\mathcal L]_{F^r}$.
We say that $S$ is {\em $F$-sparse}.
\item[(ii)]
Suppose $r>0$, $\Sigma$ is an $F^r$-spanning set for $M$, and $\mathcal L\subseteq \Sigma^*$ is regular such that $[\mathcal L]_{F^r}=S$.
Fix any linear ordering $\prec$ on $\Sigma$ and denote also by $\prec$ the induced length-lexicographic ordering on $\Sigma^*$.
Then 
$$\displaystyle \mathcal L_\prec:=\{w\in\mathcal L:w\preceq v\text{ for all }v\in\mathcal L\text{ with }[v]_{F^r}=[w]_{F^r}\}$$
is sparse.
\end{itemize}
\end{fact}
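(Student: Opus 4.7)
The implication (ii)$\Rightarrow$(i) is immediate:\ since $S$ is $F$-automatic, a triple $(r,\Sigma,\mathcal L)$ as in~(ii) exists, and the resulting $\mathcal L_\prec$ is itself a sparse sublanguage witnessing~(i).

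For (i)$\Rightarrow$(ii), my plan is to establish $F$-sparsity as an intrinsic property of $S$, independent of its chosen representation. First, I would check that $\mathcal L_\prec$ is regular by the product-automaton construction from the proof of Lemma~\ref{unamb}: the set $\{(v,w)\in(\Sigma\times\Sigma)^*:v\prec w,\ [v]_{F^r}=[w]_{F^r}\}$ is the intersection of two regular languages, so projecting out the second coordinate and intersecting with the complement inside $\mathcal L$ yields $\mathcal L_\prec$. Combined with the fact that $w\mapsto[w]_{F^r}$ is a bijection $\mathcal L_\prec\to S$ (by the length-lex character of $\prec$), this reduces sparsity of $\mathcal L_\prec$ to polynomial growth of $|\mathcal L_\prec\cap\Sigma^{\leq n}|$ via the characterisation of sparsity in Proposition~7.1 of~\cite{fsets-SML}. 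Note that this quantity equals the number of $s\in S$ whose shortest $\mathcal L$-representation has length at most $n$.

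The heart of the argument is to show that if $S$ admits any sparse representation $[\mathcal L_0]_{F^{r_0}}$ over an $F^{r_0}$-spanning set $\Sigma_0$, then for any other $(r,\Sigma,\mathcal L)$ as in~(ii), the count $|\mathcal L_\prec\cap\Sigma^{\leq n}|$ grows polynomially. My approach is to translate representations across spanning sets and bases. Passing to the common base $r':=\operatorname{lcm}(r,r_0)$ via~\cite[Lemma~5.7]{fsets-SML}, both $\Sigma_0(r'/r_0,F^{r_0})$ and $\Sigma(r'/r,F^r)$ become $F^{r'}$-spanning sets. Within this common base, iterating property~(iii) of Definition~\ref{defnspan} together with the finiteness of the alphabets allows one to rewrite any $F^{r'}$-expansion in one basis as an $F^{r'}$-expansion in the other, with only a linear blow-up in length coming from a controlled propagation of carries. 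Translating further back into the specific regular language $\mathcal L$ should be absorbable into a constant additive overhead by a standard diameter-of-automaton argument exploiting that appending the digit $0\in\Sigma$ does not change the expansion's value. Chaining these translations then shows that the number of elements of $S$ having an $\mathcal L$-representation of length at most $n$ is bounded polynomially by $|\mathcal L_0\cap\Sigma_0^{\leq cn+d}|$ for constants $c,d$, which is polynomial by sparsity of $\mathcal L_0$. Hence $\mathcal L_\prec$ is sparse, as required.

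The main obstacle is the length-controlled translation in the common-base step:\ the carries arising from property~(iii) of Definition~\ref{defnspan} must be shown to propagate so as to cause only linear growth in word length, and the final translation back into $\mathcal L$ must avoid the multiplicative blow-up that would in general arise from non-trivial structure of the automaton for $\mathcal L$. These quantitative estimates are precisely the technical content of the cited Corollary~5.12 of~\cite{hawthorne}.
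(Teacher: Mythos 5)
The paper does not prove this statement: it is recorded as a \emph{Fact}, imported verbatim as Corollary~5.12 of the cited reference~\cite{hawthorne}, so there is no in-paper proof to compare against. I can only assess your proposal on its own terms.

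The implication (ii)$\Rightarrow$(i) is correct and essentially immediate, modulo checking that $\mathcal L_\prec$ is regular and that $w\mapsto[w]_{F^r}$ is a bijection $\mathcal L_\prec\to S$ --- both of which you address (the bijectivity uses that length-lex is a well-order, so each fibre has a unique minimum).

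The substantive direction is (i)$\Rightarrow$(ii), and there your argument is a plausible outline rather than a proof --- as you acknowledge in your final sentence by deferring the quantitative estimates to the cited Corollary. Two steps would need to be filled in. First, the claimed linear length bound under change of spanning set, via iterating property~(iii) of Definition~\ref{defnspan} to propagate carries, is plausible (express each digit of one alphabet as a bounded-length expansion in the other, then absorb carries) but is a genuine estimate that has to be made. Second, and this is the more serious gap, you pass from ``$s$ admits a $\Sigma_0^*$-expansion of length $\leq cn+d$'' to ``$s$ admits an $\mathcal L_0$-expansion of length $\leq c'n+d'$'' without argument; a priori the only $\mathcal L_0$-representatives of $s$ could all be very long. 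What is needed is a pumping argument on the product automaton for the regular relation $\{(u,v):[u]_{F^{r_0}}=[v]_{F^{r_0}}\}$ (cf.\ the construction in Lemma~\ref{unamb}), showing that if $v$ has length $n$ and some $u\in\mathcal L_0$ shares its value, then some such $u$ has length $\leq n+N$ with $N$ the number of states. Your appeal to a ``diameter-of-automaton argument'' points in this direction but does not carry it out. As written, the proposal is a reasonable reconstruction of the strategy but not a self-contained proof.
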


\begin{remark}
By the {\em length-lexicographic} ordering $\prec$ we mean that among words of a given length the ordering is lexicographic and if $|w|<|v|$ then $w\prec v$.
\end{remark}

Subsets satisfying condition~(i) were called {\em $F$-sparse} in~\cite{fsets-SML}, but the seemingly stronger condition~(ii) is more concrete and easier both to verify or falsify in practice.
In $\S$\ref{subsect-sparseorb} we will return to sparsity, studying in detail the structure of $F$-sparse subsets of isotrivial commutative algebraic groups. 
The next fact says that $F$-sparse sets are far from being subgroups.

\begin{fact}[Corollary~5.14 of~\cite{hawthorne}]
\label{nosparsegroup}
Suppose $(M,F)$ is a finitely generated abelian group equipped with an injective endomorphism and $S\subseteq M$ is $F$-automatic.
If $S$ is $F$-sparse then it does not contain any coset of an infinite subgroup of~$M$.
\end{fact}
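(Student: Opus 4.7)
The plan is to derive a contradiction between a polynomial upper bound on $|S\cap A_n|$ coming from $F$-sparsity and an exponential lower bound on $|S\cap A_n|$ coming from the presence of an infinite coset, where $A_n:=\{[w]_{F^r}:w\in\Sigma^{\leq n}\}$ denotes the ``ball of radius $n$'' under a fixed $F^r$-spanning set $\Sigma$.

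Suppose, toward a contradiction, that $a+H\subseteq S$ for some infinite subgroup $H\leq M$. Since $M$ is finitely generated so is $H$, and being infinite it contains an element $h$ of infinite order; hence $\{a+kh:k\in\mathbb Z\}\subseteq S$. Fix $r>0$, an $F^r$-spanning set $\Sigma$ for $M$, and a sparse regular $\mathcal L\subseteq\Sigma^*$ with $[\mathcal L]_{F^r}=S$, as provided by the definition of $F$-sparsity. By Fact~\ref{robust-sparse}(ii), the sublanguage $\mathcal L_\prec\subseteq\mathcal L$ of $\prec$-least representatives is also sparse, and $[\cdot]_{F^r}\colon\mathcal L_\prec\to S$ is a bijection, so
$|S\cap A_n|\leq|\mathcal L_\prec\cap\Sigma^{\leq n}|=O(n^d)$ for some $d\geq 0$.

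The crux of the argument is the reverse estimate $|(a+\mathbb Z h)\cap A_n|\geq\alpha^n$ for some $\alpha>1$ and all large $n$. This should follow from property~(iii) of the spanning set in Definition~\ref{defnspan}: since any sum of five digits from $\Sigma$ can be collapsed into a length-two $F^r$-expansion, there is an addition algorithm on $F^r$-expansions whose output length exceeds the maximum input length by only an additive constant $C$. Iterating the resulting bound $x+y\in A_{m+C}$ yields $\operatorname{length}(2^j h)\leq\operatorname{length}(h)+Cj$, and then aggregating along the binary expansion of $k$ gives $\operatorname{length}(a+kh)\leq C'\log_2 k+O(1)$ for $k\geq 1$. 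Consequently at least $\Omega(2^{n/C'})$ distinct elements of $a+\mathbb Z h$, all belonging to $S$, lie in $A_n$. Comparing with the polynomial bound $O(n^d)$ produces the desired contradiction for $n$ large. The principal technical obstacle is making the bounded-carry claim $x+y\in A_{m+C}$ precise; but this is precisely the kind of statement property~(iii) was designed to deliver, by iteratively replacing up to five digits at a given position (the digit of $x$, the digit of $y$, the incoming carry, padded by $0$'s) with a pair of digits in $\Sigma$.
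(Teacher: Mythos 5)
Your growth-rate dichotomy strategy is sound, and the lower-bound half is correct: Definition~\ref{defnspan}(iii), applied to the triple $(u_i,v_i,c_i)$ padded with two zeros at each position $i$, yields a digit-by-digit addition of $F^r$-expansions in which the carry at each step remains an element of $\Sigma$, so the output length exceeds $\max(\operatorname{length} x,\operatorname{length} y)$ by at most $1$. Running this along the binary expansion of $k$ gives $\operatorname{length}(a+kh)=O(\log k)$, hence exponentially many elements of $a+\mathbb Z h\subseteq S$ lie in $A_n$; since $h$ has infinite order these are pairwise distinct, as required.

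The upper bound, however, has a gap as written. You fix a sparse $\mathcal L$ with $[\mathcal L]_{F^r}=S$ (as supplied by the definition of $F$-sparsity) and then assert that the bijection $[\cdot]_{F^r}\colon\mathcal L_\prec\to S$ gives $|S\cap A_n|\leq|\mathcal L_\prec\cap\Sigma^{\leq n}|$. This does not follow: an element $s\in S\cap A_n$ has \emph{some} expansion of length $\leq n$, but that particular word need not lie in your chosen $\mathcal L$, so the unique $\mathcal L_\prec$-representative of $s$ can be longer than $n$ and the inequality fails. (For instance, every word of $\mathcal L$ could end in a fixed long block of trailing zeros; $\mathcal L$ would still be sparse with $[\mathcal L]_{F^r}=S$, yet $\mathcal L\cap\Sigma^{\leq n}=\emptyset$ for small $n$ while $S\cap A_n\neq\emptyset$.) The repair is to apply Fact~\ref{robust-sparse}(ii) not to the sparse $\mathcal L$ from the definition but to the full preimage $\mathcal L':=\{w\in\Sigma^*:[w]_{F^r}\in S\}$, which is regular because $S$ is $F$-automatic (and, by Remark~\ref{remfauto}(a), this holds for whichever $(r,\Sigma)$ you fixed). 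Since $\prec$ orders first by length, the $\mathcal L'_\prec$-representative of $s$ \emph{is} a shortest expansion of $s$, so $s\in A_n$ forces it into $\Sigma^{\leq n}$, giving $|S\cap A_n|=|\mathcal L'_\prec\cap\Sigma^{\leq n}|=O(n^d)$. With that substitution the contradiction goes through and the proof is complete.
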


We now use $F$-sparsity to solve our decision problem.
Recall that we already effectively produced an $F^r$-spanning set $\Sigma$ for $G(K)$, and a regular language $\mathcal L'$ on $\Sigma$ such that $X(K)=[\mathcal L']_{F^r}$ and each element of $X(K)$ has a unique $F^r$-representation in $\mathcal L'$.
We first argue that $X(K)$ contains an infinite coset if and only if $\mathcal L'$ is not sparse.
Indeed, assume that $X(K)$ contains a coset of an infinite subgroup of $G$.
Then in fact that subgroup is in $G(K)$, and hence by Fact~\ref{nosparsegroup} applied to $M=G(K)$ and $S=X(K)$, it follows that $X(K)$ is not $F$-sparse.
Hence $\mathcal L'$ is not sparse.
Conversely, suppose that $X(K)$ does not contain any such infinite coset.
Then by Theorem~\ref{iml} -- or indeed by the original isotrivial Mordell-Lang theorem~\cite[Theorem~B]{fsets} -- we have that $X(K)$ is a finite union of sets from $\mathcal S(G,F)$.
But it was shown in~\cite{fsets-SML}, see the proof of Theorem~7.4 of that paper, that such sets are $F$-sparse.
So condition~(ii) of Fact~\ref{robust-sparse} holds of $S=X(K)$, and we get that $\mathcal L'_\prec$ is also sparse.
But $\mathcal L'_\prec=\mathcal L'$ as each element of $X(K)$ has a unique $F^r$-representation in $\mathcal L'$.
Therefore $\mathcal L'$ is sparse.

We have reduced the problem to deciding whether or not $\mathcal L'$ is sparse.
Let $\mathcal A'$ be the automaton recognising $\mathcal L'$ that was explicitly constructed in the proof of Theorem~\ref{decide-infinite}.
We may assume that every state in $\mathcal A'$ is accessible, in the sense that it can be reached by some input.
Now, one of the equivalent conditions for $\mathcal L'$ to be sparse, given in~\cite[Proposition~7.1(4)]{fsets-SML}, is that $\mathcal A'$ has no ``double loops".
This condition is something visibly decidable about the shape of $\mathcal A'$.
\qed

\bigskip
\section{A gap theorem for rational points of bounded height on subvarieties of  isotrivial abelian varieties}
\label{height dichotomy}

\noindent
The sparse/non-sparse dichotomy for regular languages that appeared in~$\S$\ref{decide-coset} yields an analogous dichotomy within the context of isotrivial Mordell-Lang.
In order to properly state this, we make use of the N\'eron-Tate canonical height on an abelian variety (see \cite{Lang} or \cite[Part~B]{H-S} for more details).

\begin{theorem}
Let $G$ be an abelian variety defined over $\mathbb{F}_q$, let $F:G\to G$ be the $q$-power Frobenius endomorphism, let $K$ be a finitely generated field extension of~$\mathbb{F}_q$, and let $X$ be a closed subvariety of $G$ over $K$.
Denote by $h$ the N\'eron-Tate canonical height on $G$.
Then the following are equivalent:
\begin{enumerate}
\item $X(K)$ is $F$-sparse;
\item there are $C>0$ and $d\ge 0$ such that for sufficiently large $H$
$$\#\{ c\in X(K)\colon h(c)\le H\} \le C(\log(H))^d;$$
\item 
$\#\{ c\in X(K)\colon h(c)\le H\} ={\rm o}(H^{1/2})$;
\item $X(K)$ does not contain a coset of an infinite subgroup of $G$;
\item $X(K)$ is a finite union of sets from $\mathcal S(G,F)$.
\end{enumerate}
\label{thm:equivalence}
\end{theorem}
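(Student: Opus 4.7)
The plan is to prove the cyclic implications $(1) \Rightarrow (2) \Rightarrow (3) \Rightarrow (4) \Rightarrow (5) \Rightarrow (1)$. Three of these are already essentially in hand from the paper. For $(4) \Rightarrow (5)$, apply Theorem~\ref{iml} (our Theorem~\ref{B}) to $X \cap G(K)$: in the decomposition $X(K) = \bigcup_i (S_i+\Lambda_i)$, each $\Lambda_i$ must be finite (else $X(K)$ would contain a coset of $\Lambda_i$), so each summand is a finite union of translates of elements of $\mathcal S(G,F)$. For $(5) \Rightarrow (1)$, as already used in~$\S$\ref{decide-coset}, the proof of Theorem~7.4 of~\cite{fsets-SML} establishes that translates of sums of $F$-orbits are $F$-sparse, and $F$-sparsity is preserved under finite unions. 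For $(1) \Rightarrow (4)$ we invoke Fact~\ref{nosparsegroup} (with $M = G(K)$, which is finitely generated by Lang-N\'eron). Finally, $(2) \Rightarrow (3)$ is immediate since $(\log H)^d = o(H^{1/2})$.

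The two new implications are $(1) \Rightarrow (2)$ and $(3) \Rightarrow (4)$. For $(1) \Rightarrow (2)$, we use the effectively constructed $F^r$-spanning set $\Sigma$ for $G(K)$ from $\S$\ref{subsect-effectivespan} and set $\mathcal L := \{w \in \Sigma^* : [w]_{F^r}\in X(K)\}$. By Fact~\ref{robust-sparse}, $F$-sparsity of $X(K)$ forces the length-lex-minimal sublanguage $\mathcal L_\prec$ to be sparse, so $\#\{w \in \mathcal L_\prec : |w| \le n\} \le C n^d$ for some $C, d$, and the map $w \mapsto [w]_{F^r}$ is a bijection between $\mathcal L_\prec$ and $X(K)$. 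Since the $q$-power Frobenius multiplies the N\'eron-Tate canonical height $\hat h$ by $q$, a direct estimate gives $\hat h([w]_{F^r}) \le C' q^{r|w|}$ for every $w \in \Sigma^*$, with $C'$ depending only on $\max_{x \in \Sigma}\hat h(x)$ and the parallelogram constants. Conversely, because $\Sigma$ was arranged in $\S$\ref{subsect-effectivespan} to contain a complete set of representatives for $G(K)/F^r G(K)$, we may greedily decompose any $c \in G(K)$ as $c = \sigma_0 + F^r c_1$ with $\sigma_0 \in \Sigma$; the triangle inequality for $\hat h^{1/2}$ then gives $\hat h(c_1)^{1/2} \le q^{-r/2}\bigl(\hat h(c)^{1/2} + \max_{x\in\Sigma}\hat h(x)^{1/2}\bigr)$, which contracts geometrically since $r$ was chosen large. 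Iterating produces an $F^r$-expansion of $c$ of length at most $c_1 \log \hat h(c) + c_2$, and hence the length-minimal representative in $\mathcal L_\prec$ has length $O(\log \hat h(c))$. Combining, $\#\{c \in X(K) : \hat h(c) \le H\} \le \#\{w \in \mathcal L_\prec : |w| \le c_1 \log H + c_2\} = O((\log H)^d)$.

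For $(3) \Rightarrow (4)$, we argue the contrapositive. If $X(K)$ contains $c_0+\Lambda$ for some infinite subgroup $\Lambda \le G$, then $\Lambda \le G(K)$ and, by Lang-N\'eron, $\Lambda$ is finitely generated of positive Mordell-Weil rank $\rho \ge 1$. The canonical height descends to a positive-definite quadratic form on $(\Lambda/\Lambda_{\mathrm{tors}})\otimes\mathbb R \cong \mathbb R^\rho$, so standard lattice-point counting combined with the triangle inequality for $\hat h^{1/2}$ yields $\#\{\lambda \in \Lambda : \hat h(c_0+\lambda) \le H\} \gg H^{\rho/2} \ge C''\sqrt H$ for large $H$, contradicting (3). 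The main obstacle will be in $(1) \Rightarrow (2)$: while the upper bound $\hat h([w]_{F^r}) \le C'q^{r|w|}$ is routine, obtaining the matching logarithmic upper bound on the length of the minimal representative requires carefully tracking the geometric contraction of the greedy digit-extraction procedure against the explicit spanning set of $\S$\ref{subsect-effectivespan}, and in particular justifying that the $r$ produced there is large enough for the contraction to take effect.
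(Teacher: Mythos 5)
Your proposal is correct and follows essentially the same strategy as the paper: the same cycle $(1)\Rightarrow(2)\Rightarrow(3)\Rightarrow(4)\Rightarrow(5)\Rightarrow(1)$, with $(1)\Rightarrow(2)$ established via the same two-sided height-vs-length estimate (your greedy $\hat h^{1/2}$-contraction is just a cleaner phrasing of the paper's Lemma~\ref{lem:k3}, whose proof uses the same iteration with Cauchy--Schwarz and the constants $\kappa_1,\kappa_2,\kappa_3$), and $(3)\Rightarrow(4)$ by the same contrapositive point-count (the paper simplifies by restricting to a single non-torsion cyclic subgroup, getting $h(a+nb)=O(n^2)$, rather than a full rank-$\rho$ lattice count, but this is inessential). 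Your closing worry about whether $r$ is ``large enough'' is unfounded: the contraction factor $q^{-r/2}$ is already $<1$ for any $r\geq 1$, and the paper's Lemma~\ref{lem:k3} simply chooses an auxiliary $\ell$ internal to its own proof, independent of the $r$ from~$\S$\ref{subsect-effectivespan}.
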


\begin{remark}
The equivalence of~(2) and~(3) means that there is a gap: either the number of points on $X(K)$ of height at most $H$ is bounded above by $C(\log(H))^d$ or it is bounded below by $C' H^{1/2}$.
\end{remark}

To prove this result, we require two technical lemmas dealing with estimation.
We have a positive quadratic form $\langle\, \cdot\, , \, \cdot\, \rangle$ on $G(K)\times G(K)$ given by
$$\langle x,y\rangle = h(x+y)-h(x)-h(y).$$
Suppose $\Sigma\subseteq G(K)$ is a finite set.
Observe that for $c\in [\Sigma]_F$ and $x\in G(K)$,
$$2h(x+c)=\langle x+c,x+c\rangle =\langle x,x\rangle + 2\langle x,c\rangle +\langle c,c\rangle.$$
We can rewrite the right side as $2h(x)+2h(c)+2\langle x,c\rangle$.
Then since we have only finitely many choices for $c\in [\Sigma]_F$, Cauchy-Schwarz gives there is some positive constant $\kappa_1$ such that 
$\langle x,c\rangle \le \kappa_1 h(x)^{1/2}$.
So letting $\kappa_2$ denote the max of $h(c)$ as $c$ ranges over $[\Sigma]_F$ we get 
 there are positive constants $\kappa_1,\kappa_2$ such that,
\begin{equation}
\label{eq:kappa12}
h(x+c)\le h(x)+\kappa_1 h(x)^{1/2} + \kappa_2\ \ \ \text{ for all $c\in [\Sigma]_F$ and $x\in G(K)$.}
\end{equation}
In addition (using that the N\'eron-Tate height differs from the usual Weil height by a uniform bounded amount), if $r$ is a positive integer, then there is a constant $\kappa_3$, which depends upon $r$, such that 
\begin{equation}
\label{eq:kappa3}
q^r h(x)-\kappa_3\le h(F^r(x))\le q^r h(x)+\kappa_3\ \ \ \text{ for all $x\in G(K)$.}
\end{equation}

\begin{lemma}
Let $\Sigma$ be an $F^r$-spanning set for $G(K)$ and $\kappa_1,\kappa_2,\kappa_3$ as in Equations~(\ref{eq:kappa12}) and~(\ref{eq:kappa3}).
Define $E_n$ recursively by $E_0=\max\{2+\kappa_3+h(a)\colon a\in \Sigma\}$ and $E_n  = E_{n-1}(1+(\kappa_1+\kappa_3)/q^{n/2})$ for $n\ge 1$.
Then for all $c_0,\ldots ,c_n\in \Sigma$, the height of 
$c_0 + F^r(c_1)+\cdots +F^{nr}(c_n)$ is at most
$E_n q^{nr}$.
In particular, there is a positive constant $C_0$ such that for every $n\ge 1$ we have
$c_0 + F^r(c_1)+\cdots +F^{nr}(c_n)$ has height at most $C_0q^{nr}$.
\label{lem:k12}
\end{lemma}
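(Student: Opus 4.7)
The plan is to prove the bound by induction on $n$. The base case $n=0$ is immediate: for any $c_0\in\Sigma$ the definition of $E_0$ gives $h(c_0)\leq E_0$. For the inductive step, write $S_n := c_0 + F^r(c_1)+\cdots+F^{nr}(c_n)$ as $S_n = c_0 + F^r(T)$ where
\begin{equation*}
T := c_1 + F^r(c_2)+\cdots+F^{(n-1)r}(c_n)
\end{equation*}
is a sum of the same shape with one fewer summand. The induction hypothesis therefore yields $h(T)\leq E_{n-1}q^{(n-1)r}$.

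Next I would push this bound through $F^r$ using the right-hand inequality of~(\ref{eq:kappa3}):
\begin{equation*}
h(F^r(T)) \;\leq\; q^r h(T)+\kappa_3 \;\leq\; E_{n-1}q^{nr}+\kappa_3.
\end{equation*}
Then~(\ref{eq:kappa12}), applied with $x=F^r(T)$ and $c=c_0$ (a one-letter word in $[\Sigma]_F$), gives
\begin{equation*}
h(S_n) \;\leq\; E_{n-1}q^{nr}+\kappa_3+\kappa_1\sqrt{E_{n-1}q^{nr}+\kappa_3}+\kappa_2.
\end{equation*}
Using $\sqrt{A+B}\leq\sqrt{A}+\sqrt{B}$, the perturbation beyond the dominant term $E_{n-1}q^{nr}$ is of order $q^{nr/2}$, and an elementary estimate --- exploiting that $E_0$, and hence $E_{n-1}$, has been defined with the additive cushion $2+\kappa_3$ precisely to absorb the constants $\kappa_2$ and $\kappa_1\sqrt{\kappa_3}$ --- shows this is at most $E_{n-1}q^{nr}\bigl(1+(\kappa_1+\kappa_3)/q^{n/2}\bigr) = E_n q^{nr}$, closing the induction.

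For the ``in particular'' clause, I would observe that $E_n = E_0\prod_{k=1}^{n}\bigl(1+(\kappa_1+\kappa_3)/q^{k/2}\bigr)$. Since $q\geq 2$, the series $\sum_{k\geq 1} q^{-k/2}$ converges geometrically, so the infinite product converges to a finite value, and setting
\begin{equation*}
C_0 \;:=\; E_0\prod_{k=1}^{\infty}\bigl(1+(\kappa_1+\kappa_3)/q^{k/2}\bigr),
\end{equation*}
we have $E_n\leq C_0$ for every $n$, whence the uniform bound $h(S_n)\leq C_0 q^{nr}$ follows.

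The only delicate point is the algebraic bookkeeping in the inductive step: one must verify that the additive error terms produced by the square-root-of-a-sum estimate are dominated by the multiplicative slack $E_{n-1}q^{nr}(\kappa_1+\kappa_3)/q^{n/2}$ built into the definition of $E_n$. The cushion of $2+\kappa_3$ in $E_0$ is precisely what makes the resulting inequality close cleanly for every $n\geq 1$, independently of the sizes of the $\kappa_i$.
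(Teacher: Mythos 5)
Your proof is essentially identical to the paper's: both proceed by induction on $n$, decompose $S_n = c_0 + F^r(T)$ with $T = c_1 + F^r(c_2) + \cdots + F^{(n-1)r}(c_n)$, apply inequality~(\ref{eq:kappa3}) to push the inductive bound on $h(T)$ through $F^r$, then apply~(\ref{eq:kappa12}) to handle the translate by $c_0$, and finally absorb the resulting lower-order terms into the multiplicative slack built into the recursion for $E_n$ (both you and the paper leave the final elementary absorption estimate as routine). The passage to the uniform constant $C_0$ via convergence of the infinite product is also the same.
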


\begin{proof}
We prove the first part by induction on $n$.  When $n=0$ this follows from our choice of $E_0$.
Now suppose that the result holds whenever $n<k$ for some $k\ge 1$ and consider an element $c_0+\cdots +F^{kr}(c_k)$.
Then
$$F^r(c_1)+\cdots +F^{kr}(c_k) = F^r(c_1+\cdots +F^{r(k-1)}(c_k))$$
has height at most
$q^{kr} E_{k-1}+ \kappa_3$ by the induction hypothesis and Equation (\ref{eq:kappa3}).  
We then translate by $c_0\in \Sigma$ and so an application of Equation (\ref{eq:kappa12}) yields that the height of 
$c_0+\cdots +F^{kr}(c_k)$ is at most
$q^{kr} E_{k-1}+\kappa_3 + \kappa_1(q^{kr} E_{k-1}+\kappa_3)^{1/2} + \kappa_2$.  
This quantity is less than or equal to
$$q^{kr} E_{k-1} + (\kappa_1+\kappa_2) E_{k-1} q^{kr/2} = q^{kr} E_k,$$ so we obtain the first claim.
To complete the proof observe that
$E_k\le C_0:=E_0\prod_{j\ge 1} (1+(\kappa_1+\kappa_2)/q^{jr/2})$ and the product on the right converges since
$\sum 1/q^{jr/2}$ converges, so we have that the height of $F^r$-expansions of length $n$ is at most 
$C_0q^{nr}$ for every $n\ge 0$.  The result now follows.
\end{proof}

\begin{lemma}
Let $\Sigma$ be an $F^r$-spanning set for $G(K)$ and $\kappa_1,\kappa_2,\kappa_3$ as in Equations~(\ref{eq:kappa12}) and~(\ref{eq:kappa3}). Then there is a positive constant $C_1$ such that every element of $G(K)$ of height at most $C_1q^{nr}$ can be realised as the $F^r$-expansion of a word in $\Sigma^*$ of length at most $n$. 
\label{lem:k3}
\end{lemma}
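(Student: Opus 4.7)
The plan is to iteratively peel off digits from $x$ using the $F^r$-spanning property, bound how the height shrinks at each step, and then terminate the iteration by Northcott finiteness. Given any $z\in G(K)$, the spanning property lets us write $z=c+F^r(z')$ with $c\in\Sigma$ and $z'\in G(K)$; Equation~\eqref{eq:kappa12} applied to $z$ and $-c\in\Sigma$ (using symmetry), followed by Equation~\eqref{eq:kappa3} to pass from $h(F^r(z'))$ to $h(z')$, yields
\[
h(z')\leq\bigl(h(z)+\kappa_1 h(z)^{1/2}+\kappa_2+\kappa_3\bigr)/q^r.
\]
Iterating produces sequences $x_0=x,x_1,\dots$ in $G(K)$ and $c_0,c_1,\dots\in\Sigma$ satisfying $x_j=c_j+F^r(x_{j+1})$.

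Suppose $h(x)\leq Cq^{nr}$ and write $h(x_k)\leq Cq^{(n-k)r}F_k$ with $F_0=1$. The recursion above forces
\[
F_{k+1}\leq F_k+\kappa_1 C^{-1/2}q^{-(n-k)r/2}F_k^{1/2}+(\kappa_2+\kappa_3)C^{-1}q^{-(n-k)r},
\]
in analogy to the bounded sequence $E_n$ of Lemma~\ref{lem:k12}. Using that $F_k$ is increasing in $k$, summing the resulting geometric series in $q^{-r/2}$ gives a self-referential inequality $F_k\leq 1+\alpha_k F_k^{1/2}+\beta_k$, which solves by the quadratic formula to $F_k\leq\bigl((\alpha_k+\sqrt{\alpha_k^2+4(1+\beta_k)})/2\bigr)^2$. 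A direct calculation shows that for $k=n-N_0$ the prefactor $q^{N_0 r}$ exactly cancels the $q^{-N_0 r/2}$ terms in $\alpha_{n-N_0},\beta_{n-N_0}$, so the quantity $Cq^{N_0 r}F_{n-N_0}$ admits a limit $L$ as $C\to 0$ depending only on $r,q,\kappa_1,\kappa_2,\kappa_3$, and not on $N_0$ or $n$.

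By Northcott, the set $\{z\in G(K):h(z)\leq 2L\}$ is finite, and since $\Sigma$ is $F^r$-spanning, each of its elements has some finite $F^r$-expansion; let $N_0$ be an upper bound on these lengths. Choose $C_1>0$ small enough that $Cq^{N_0 r}F_{n-N_0}\leq 2L$ for all $C\leq C_1$ (possible since the limit as $C\to 0$ is exactly $L$). Now any $x$ with $h(x)\leq C_1 q^{nr}$ and $n\geq N_0$ satisfies $h(x_{n-N_0})\leq 2L$ after $n-N_0$ peels, so $x_{n-N_0}$ admits an $F^r$-expansion of length at most $N_0$; concatenating with the peeled digits $c_0,\ldots,c_{n-N_0-1}$ exhibits $x$ as an $F^r$-expansion of length at most $n$. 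The residual cases $n<N_0$ are absorbed by further shrinking $C_1$.

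The main technical point will be verifying that the limit $L$ is genuinely independent of $N_0$: this hinges on a precise cancellation between the $q^{N_0 r}$ prefactor and the $q^{-N_0 r/2}$ factors arising in $\alpha_{n-N_0}$ and $\beta_{n-N_0}$, after solving the quadratic. Aside from that, the bookkeeping closely mirrors the analysis of the bounded sequence $E_n$ in the proof of Lemma~\ref{lem:k12}.
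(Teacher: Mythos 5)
Your argument is correct and follows essentially the same route as the paper: iteratively peel a digit via the spanning property, bound the height drop using Equations~(\ref{eq:kappa12}) and~(\ref{eq:kappa3}), and terminate via Northcott on a set of bounded height. The paper's bookkeeping is a bit cleaner---it tracks a decreasing product $B_n=B_{n-1}(1-C/q^{nr/2})$ bounded below by $1$, which lets the inductive step be verified directly without unravelling the recursion and solving a quadratic---but the skeleton, and in particular the role of the base-case length bound ($m$ there, $N_0$ here), is identical.
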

\begin{proof}
Let $C=\kappa_1+\kappa_2+\kappa_3$.  We let $\ell$ be a positive integer with the property $q^{\ell r/2}>C$.
As before, 
$$\prod_{j\ge \ell}(1-C/q^{jr/2})$$ converges to a positive number $\theta>0$.  Pick $B_{\ell}$ such that $q^{-\ell r} B_{\ell} \theta>1$. Then there is some $m$ such that every element of N\'eron-Tate height at most $q^{\ell r} B_{\ell}$ has an $F$-expansion of length at most $m$.  We define $B_n=B_{n-1}(1-C/q^{nr/2})$ for $n>\ell$.  So from the above we have $B_n>1$ for all $n\ge \ell$.
 
 We claim that for every $n\ge \ell$ all numbers of height at most $q^{nr} B_n$ can be realised as the $F^r$-expansion of a word in $\Sigma^*$ of length at most $m+n$.  We again prove this by induction on $n$ with the base case, $n=\ell$, being immediate.  Now suppose that the claim holds whenever $n<k$ and suppose that $x$ has height at most $q^{kr} B_k$.  Then there is some $c_0\in \Sigma$ such that $x-c_0\in G(K^{q^r})$.  Hence 
$h(x-c_0)\le h(x) + \kappa_1 h(x)^{1/2} + \kappa_2$.  So if we let $y=F^{-r}(x-c_0)$ then 
$$h(y) \le \frac{h(x) + \kappa_1 h(x)^{1/2} + \kappa_2 +\kappa_3}{q}.$$  Since $q^{kr} B_k\ge 1$,
the height of $y$ is at most
$(q^{kr}B_k + (\kappa_1+\kappa_2+\kappa_3) (q^{kr} B_k)^{1/2})/q^r$.
Now we claim that $B_{k-1}\ge (B_k + (\kappa_1+\kappa_2+\kappa_3) (B_k/q^{kr})^{1/2})$.  Once we have this, we will have that $y$ has height at most $q^{(k-1)r}B_{k-1}$ and so the result will then follow by induction.

To show this we must show $B_{k-1}\ge B_k+ C (B_k/q^{kr})^{1/2}= B_k(1+C (B_kq^{kr})^{-1/2})$. 
But $B_k = B_{k-1} (1-C/q^{kr/2})$ and so 
\begin{align*}
B_k(1+C (B_k q^{kr})^{-1/2}) &= B_{k-1}(1-C/q^{kr/2}) + C B_k^{1/2} /q^{kr/2}\\
&\le B_{k-1} - C B_{k-1}/q^{kr/2} + C B_{k-1}^{1/2}/q^{kr/2} \\
&\le B_{k-1}.
\end{align*}
So since $B_k\ge 1$ for all $k$ we see that all elements of height at most $q^{kr}$ have an expansion of length at most $m+k$ and so we see that if we take $C_1=q^{-mr}$ we obtain the result.
\end{proof}

\begin{corollary}
\label{gap-corollary}
Let $\Sigma$ be an $F^r$-spanning set for $G(K)$ and let
$$\mathcal L:=\{v\in \Sigma^*\colon [v]_{F^r}\in X(K)\}.$$
Fix a linear ordering $\prec$ on $\Sigma$ and denote also by $\prec$ the induced length-lexicographic ordering on $\Sigma^*$.
Let
$$\displaystyle \mathcal E:=\{w\in\mathcal L:w\preceq v\text{ for all }v\in\mathcal L\text{ with }[v]_{F^r}=[w]_{F^r}\}.$$
If $\Theta(m)$ denotes the number of elements in $
X(K)$ of height at most $m$, then there are positive constants $C_0$ and $C_1$ such that
\begin{equation}\label{eq:Theta}
\Theta(C_1 q^{nr})  \le \#\mathcal{E}_{\le n} \le \Theta(C_0 q^{nr})\end{equation}
for all $n\ge 1$, where $\mathcal{E}_{\le n}$ is the set of words of length at most $n$ in $\mathcal{E}$.
\end{corollary}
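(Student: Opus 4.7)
The plan is to interpret the two inequalities as translating, via the bijection $w\mapsto [w]_{F^r}$ from $\mathcal E$ onto $X(K)$, between ``height'' on the algebraic side and ``length of expansion'' on the automata side. The upper bound in~(\ref{eq:Theta}) will come from Lemma~\ref{lem:k12}, which says that short expansions have small height, and the lower bound will come from Lemma~\ref{lem:k3}, which says that small-height elements have short expansions. Throughout I will use that the ordering $\prec$ on $\Sigma^*$ is length-lexicographic, so the minimal-representative set $\mathcal E$ always selects an $F^r$-expansion of minimal length among those representing a given element of $X(K)$.

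For the upper bound, I would first note that the map $w\mapsto [w]_{F^r}$ restricts to a bijection from $\mathcal E$ onto $X(K)$. Then, given $w\in\mathcal E_{\le n}$, write $w=c_0c_1\cdots c_k$ with $k\le n-1$ and each $c_i\in\Sigma$, and apply Lemma~\ref{lem:k12} to conclude $h([w]_{F^r})\le C_0 q^{(n-1)r}$. Absorbing $q^{-r}$ into $C_0$ (i.e.\ replacing $C_0$ by $C_0/q^r$ from the start, or simply enlarging $C_0$), this gives that $[\mathcal E_{\le n}]_{F^r}$ is contained in the set of elements of $X(K)$ of height at most $C_0 q^{nr}$, and by injectivity of the map on $\mathcal E$ we obtain $\#\mathcal E_{\le n}\le \Theta(C_0 q^{nr})$.

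For the lower bound, fix $x\in X(K)$ with $h(x)\le C_1 q^{nr}$, where $C_1$ is the constant furnished by Lemma~\ref{lem:k3}. That lemma then provides some word $v\in\Sigma^*$ with $|v|\le n$ and $[v]_{F^r}=x$; in particular $v\in\mathcal L$. Let $w\in\mathcal E$ be the unique minimal representative of $x$. Since $\prec$ orders first by length, $|w|\le |v|\le n$, i.e.\ $w\in\mathcal E_{\le n}$. This defines an injection from $\{c\in X(K):h(c)\le C_1 q^{nr}\}$ into $\mathcal E_{\le n}$, giving $\Theta(C_1 q^{nr})\le\#\mathcal E_{\le n}$.

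There is no serious obstacle here beyond bookkeeping: the genuine work is already done in Lemmas~\ref{lem:k12} and~\ref{lem:k3}, and the role of this corollary is to package those two estimates via the length-minimal representative set $\mathcal E$. The only subtlety worth care is the off-by-one matter in the upper bound (a word of length $n$ has indices $0,\dots,n-1$ and so Lemma~\ref{lem:k12} gives a bound in $q^{(n-1)r}$), which is harmless after adjusting the constant~$C_0$.
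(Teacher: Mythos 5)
Your proposal is correct and follows essentially the same approach as the paper: the lower bound comes from Lemma~\ref{lem:k3} (every element of small height has a short $F^r$-expansion, hence its $\prec$-minimal representative in $\mathcal E$ is short) and the upper bound from Lemma~\ref{lem:k12} (short expansions have small height), both transported through the bijection $w\mapsto[w]_{F^r}$ from $\mathcal E$ onto $X(K)$. The only cosmetic difference is the off-by-one remark in the upper bound, where you propose adjusting $C_0$; since $C_0 q^{(n-1)r}\le C_0 q^{nr}$ already, no change of constant is actually needed.
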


\begin{proof}
By Lemma \ref{lem:k3}, there is a positive constant $C_1$ such that every element of $G(K)$ of height at most $C_1q^{nr}$ is of the form $[v]_{F^r}$ with $v\in \Sigma^*$ of length at most~$n$.
In particular, for each $x\in X(K)$ with $h(x)\le C_1 q^{nr}$ there is a word $w\in \mathcal{E}$ of length at most $n$ such that $[w]_F=c$.
This gives the first inequality.
By Lemma \ref{lem:k12}, there is a positive constant $C_0$ such that 
every word $w$ in $\Sigma^*$ of length at most $n$ has the property that $h([w]_{F^r})\le C_0 q^{nr}$.
Since $w\mapsto [w]_{F^r}$ is a bijection between $\mathcal E$ and $X(K)$, this gives the second inequality.
\end{proof}

We can now prove the gap theorem.

\begin{proof}[Proof of Theorem \ref{thm:equivalence}]
By Corollary~\ref{iml-auto}, or indeed by~\cite[Corollary~6.10]{fsets-SML}, we have that there exists an $r>0$ and an $F^r$-spanning set $\Sigma$ for $G(K)$ such that
$$\mathcal L:=\{v\in \Sigma^*\colon [v]_{F^r}\in X(K)\}$$
is regular.
So Corollary~\ref{gap-corollary} applies.

Suppose that $(1)$ holds and let $\mathcal E\subseteq\mathcal L$ be as in Corollary~\ref{gap-corollary}.
By Fact~\ref{robust-sparse}, noting that $\mathcal E$ is just what was called $\mathcal L_{\prec}$ there, we have that $\mathcal E$ is sparse.
Hence $\#\mathcal{E}_{\le n}$ is polynomially bounded, and so by Equation~(\ref{eq:Theta})
we see that $(2)$ holds.

The implication $(2)\implies (3)$ is immediate.

Observe that if $(4)$ does not hold then $X(K)$ contains $\{a+nb\colon n\in \mathbb{Z}\}$ for some $a,b\in G(K)$ with $b$ non-torsion and since $a+nb$ has height ${\rm O}(n^2)$ we see that $(3)$ does not hold.
So $(3)$ implies $(4)$.

That $(4)\implies(5)$ follows from Theorem~\ref{iml}, or indeed from~\cite[Theorem~B]{fsets}.

Finally, that $(5)\implies (1)$ was already pointed out in~\S\ref{decide-coset} using results of~\cite{fsets-SML}.
\end{proof}

\begin{remark}
Theorem \ref{thm:equivalence} holds more generally for algebraic groups $G$ that have a height function for which Equations (\ref{eq:kappa12}) and (\ref{eq:kappa3}) hold after one replaces $G(K)$ with a finitely generated $F$-submodule $\Gamma$ of $G(K)$.  In particular, the equivalences hold for $G(R)$ when $G=\mathbb{G}_m^d$ and $R$ is a finitely generated $\mathbb{F}_q$-algebra that is an integral domain.
\end{remark}
\bigskip
\section{Mordell-Lang for finitely generated $\zf$-submodules of isotrivial commutative algebraic groups}
\label{sect-iml=mod}
\noindent
Recall that $\mathcal S(G,F)$ denotes the collection of translates of finite sums of $F$-orbits, i.e., subsets of the form $a+S(a_1,\dots,a_r;\delta_1,\dots,\delta_r)$ in the notation of Section~\ref{sect-ml}.
Our goal in this final section is to prove the following:

\begin{theorem}
\label{iml-module}
Let $G$ be a commutative algebraic group over a finite field $\mathbb F_q$, let $F:G\to G$ be the $q$-power Frobenius, let $X\subseteq G$ be a closed subvariety defined over a field extension of $\mathbb F_q$, and let  $\Gamma\leq G$ be a finitely generated $\zf$-submodule.
Then $X\cap\Gamma$ is a finite union of sets of the form $S+\Lambda$ where $S\subseteq\Gamma$ is in $\mathcal S(G,F)$ and $\Lambda\leq\Gamma$ is a $\mathbb Z[F^r]$-submodule for some $r>0$.
\end{theorem}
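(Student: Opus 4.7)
The plan is to extract the desired algebraic decomposition from the automata-theoretic description of $X\cap\Gamma$ provided by Corollary~\ref{iml-fauto-module}. That corollary yields that $X\cap\Gamma$ is $F$-automatic in $(\Gamma,F)$. Using Theorem~\ref{spanning}, fix $r>0$ and an $F^r$-spanning set $\Sigma$ for $\Gamma$, together with a regular language $\mathcal L\subseteq\Sigma^*$ recognised by some finite automaton $\mathcal A$ with $[\mathcal L]_{F^r}=X\cap\Gamma$. Applying Lemma~\ref{unamb} with the equivalence relation of equality, we may shrink $\mathcal L$ so that $w\mapsto[w]_{F^r}$ becomes a bijection between $\mathcal L$ and $X\cap\Gamma$; then the algebraic structure of $X\cap\Gamma$ is completely encoded by the combinatorial shape of $\mathcal L$.

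The theorem now reduces to a structural statement for $F$-automatic subsets of $(\Gamma,F)$: every such subset is a finite union of sets of the form $T+\Lambda$ with $T\in\mathcal S(\Gamma,F)$ and $\Lambda\leq\Gamma$ a $\mathbb Z[F^{r'}]$-submodule for some $r'>0$. I would split using the sparse/non-sparse dichotomy of Fact~\ref{robust-sparse}. In the sparse case, $X\cap\Gamma$ is $F$-sparse, and the analysis of $F$-sparse subsets of isotrivial commutative algebraic groups to be carried out in $\S$\ref{subsect-sparseorb} yields that $X\cap\Gamma$ is already a finite union of sets from $\mathcal S(\Gamma,F)$; the conclusion holds with $\Lambda=0$.

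In the non-sparse case, by \cite[Proposition~7.1]{fsets-SML} the automaton $\mathcal A$ contains a \emph{double loop}: accessible states supporting two distinct nontrivial cycles, with labels $v_1,v_2\in\Sigma^*$ that can be combined with prefix and suffix words $u,w$ so that $\{uv_1^iv_2^jw:i,j\geq 0\}\subseteq\mathcal L$. Computing the resulting $F^r$-expansions as partial geometric sums of $F^r$-iterates of $[v_1]_{F^r}$ and $[v_2]_{F^r}$, one obtains a two-parameter family of points of $X\cap\Gamma$ whose Zariski closure, by rigidity of isotrivial commutative algebraic groups (as already used in the proof of Corollary~\ref{iml-auto}), contains a translate of an algebraic subgroup $H\leq G$ defined over some $\mathbb F_{q^{r'}}$. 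Since $X$ is Zariski-closed, this forces an entire coset $a+(H\cap\Gamma)\subseteq X\cap\Gamma$, and $\Lambda:=H\cap\Gamma$ is manifestly a $\mathbb Z[F^{r'}]$-submodule of $\Gamma$. Removing this coset, together with the appropriate translates of sums of $F$-orbits coming from the sparse part of $\mathcal L$, and iterating, should exhaust $X\cap\Gamma$ in finitely many steps -- this last claim needing an induction on some complexity measure of $\mathcal L$ (for instance the number of strongly connected components of $\mathcal A$ that support double loops).

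The main obstacle is the step of upgrading the forward iterated orbits produced by pumping to an actual coset of a $\mathbb Z[F^{r'}]$-submodule sitting inside $X\cap\Gamma$. The pumping lemma is purely combinatorial and yields only forward $F^r$-iterates; bridging to a genuine algebraic subgroup requires both the closedness of $X$ in $G$ (to pass to Zariski closures) and the rigidity of isotrivial commutative algebraic groups (so that those closures are algebraic subgroups over finite fields, and hence $F^{r'}$-invariant in both directions after intersecting with $\Gamma$). A secondary technical matter is setting up the termination of the induction, for which the complexity measure must be chosen so that subtracting an identified coset strictly decreases it.
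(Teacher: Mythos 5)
The high-level ingredients you list --- $F$-automaticity from Corollary~\ref{iml-fauto-module}, Lemma~\ref{unamb}, the sparse/non-sparse dichotomy, Zariski closures, rigidity --- are indeed the ones the paper uses, and your treatment of the sparse case (via the analysis in $\S$\ref{subsect-sparseorb}, i.e.\ Corollary~\ref{iml-sparse}) is correct modulo the $F$-purity reduction that it requires (Reduction~\ref{fpure}, which your sketch omits). But the non-sparse case is where the real content lies, and your argument there has a genuine gap.

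The issue is that pumping a double loop does \emph{not} produce a coset. The $F^r$-expansions of a language $uv_1^*v_2^*w$ are, after the telescoping of Lemma~\ref{expandsparse}, of the \emph{nested} form $a_0+E(a_1,a_2;\delta_1,\delta_2)=\{a_0+F^{n_1\delta_1}a_1+F^{n_1\delta_1+n_2\delta_2}a_2\}$, not a two-parameter family in the sense of a rank-two sublattice of a subgroup. The Zariski closure of such a set is $F$-invariant but need not be --- and in general is not --- a translate of an algebraic subgroup (compare: the closure of a single $F$-orbit $S(a;\delta)$ can be an arbitrary $F^{\delta}$-invariant irreducible variety through $a$, such as a curve of high genus). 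Rigidity of semiabelian or isotrivial groups says that an $F$-invariant algebraic \emph{subgroup} is over a finite field; it does not upgrade a closed $F$-invariant set to a subgroup. So the central claim of your non-sparse case, that pumping forces a coset $a+(H\cap\Gamma)\subseteq X\cap\Gamma$, does not follow, and there is no clean ``remove the coset and iterate'' step. (Also, removing a coset from $X\cap\Gamma$ need not leave an $F$-automatic set, so the proposed complexity-measure induction has no obvious invariant to decrease.)

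The paper instead produces the $\Lambda$-pieces from a quite different mechanism: it inducts on $\dim X$, passes to the largest connected stabilizer subgroup $E\leq\operatorname{Stab}(X)$ defined over $\mathbb F_p^{\alg}$, applies Lemma~\ref{unamb} with the equivalence relation ``congruent mod $E$'' (not with equality, as you propose) to get a representative language $\mathcal E$, and then proves (Lemma~\ref{ksparse}) that the portion of $\mathcal E$ whose automaton state is $d$-dimensional is \emph{sparse}. That is exactly where the pumping argument is used --- but as a contradiction: a double loop would yield differences $[uw_1v]_F-[uw_2v]_F$ lying in an $F^{\lambda}$-invariant set inside $\operatorname{Stab}(X)$, whose Zariski closure is therefore over a finite field and hence inside $E$, and a pigeonhole then violates uniqueness of representatives mod $E$. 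The cosets $\Lambda$ in the final decomposition come from $E\cap\Gamma$ and from the lower-dimensional states $Z_\ell$ via the dimension induction, together with the delicate Reduction~\ref{om} and Proposition~\ref{et}, not from pumping the non-sparse part. So your intuition about which phenomena must appear is good, but the implication ``non-sparse $\Rightarrow$ the automaton exhibits a coset'' is exactly the direction the paper has to work hard to establish indirectly, and cannot be taken as a starting point.
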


\begin{remark}
\label{betterconclusion}
In the conclusion of the theorem we can replace $\Lambda$ by $H\cap\Gamma$ where $H$ is the Zariski closure of $\Lambda$.
Indeed, $S+\Lambda\subseteq S+(H\cap\Gamma)\subseteq X\cap\Gamma$.
Note also that $H\leq G$ is an algebraic subgroup over a finite field since $\Lambda$ is $F^r$-invariant.
\end{remark}

This is an isotrivial Mordell-Lang statement like Theorem~\ref{iml} but for finitely generated $\zf$-submodules rather than finitely generated groups that are $F$-invariant.
When we restrict to $G$ semiabelian, Theorem~\ref{iml-module} is precisely Theorem~\ref{iml-semiabelian-intro}, the original isotrivial Mordell-Lang theorem of the third author and Thomas Scanlon from~\cite{fsets}.
However, our proof even in that case is new; we deduce the combinatorial structure of $X\cap\Gamma$ directly from the $F$-automaticity given by Corollary~\ref{iml-fauto-module}.

The proof will proceed by a series of reductions, with the key technical step being an understanding of the structure of ``$F$-sparse" subsets of $\Gamma$.

\medskip
\subsection{Sparsity and $F$-orbits}
\label{subsect-sparseorb}
Suppose we have an abelian group $M$ equipped with an injective endomorphism $F:M\to M$ and $\Sigma$ a finite subset of $M$.
Recall that $\mathcal L\subseteq\Sigma^*$ is said to be {\em sparse} if it is regular and the number of words in $\mathcal L$ of length $n$ is $O(n^d)$ as $n$ grows, for some $d\geq 0$.
A list of equivalent formulations is compiled in~\cite[Proposition~7.1]{fsets-SML}.
In particular, it is shown there that every sparse language is a finite union of languages of the form
$$u_1 w_1^* u_2 w_2^* \cdots u_m w_m^* u_{m+1}:=\{u_1 w_1^{n_1} u_2 w_2^{n_2} \cdots u_m w_m^{n_m} u_{m+1}:n_1,\dots,n_m\geq 0\}$$
for some words $u_i,w_i\in\Sigma^*$ with the $w_i$ all nontrivial.
Let us call languages of this form {\em simple sparse}.
The $F$-expansions of a simple sparse language will have a very special form.

\begin{definition}
Given $a_1,\dots,a_r\in M$ and positive integers $\delta_1,\dots,\delta_r$, we denote
$$\left\{F^{n_1\delta_1}a_1+F^{n_1\delta_1+n_2\delta_2}a_2+\cdots+F^{\sum_{i=1}^rn_i\delta_i}a_r:n_1,\dots,n_r\geq 0\right\}$$
by $E(a_1,\dots,a_r;\delta_1,\dots,\delta_r)$.
As usual, we write $E(a_1,\dots,a_r;\delta)$ in the case when all the $\delta_i=\delta$.
\end{definition}

\begin{lemma}
\label{expandsparse}
Suppose $M$ is an abelian group, $F:M\to M$ is an injective endomorphism, and $\Sigma$ is a finite subset of $M$.
Suppose $(N,F)$ is an extension of $(M,F)$ for which $F-\id:N\to N$ is surjective.

If $\mathcal L=u_1 w_1^* u_2 w_2^* \cdots u_m w_m^* u_{m+1}$ is a simple sparse sublanguage of $\Sigma^*$ then
$$[\mathcal L]_F=a_0+E(a_1,\dots,a_r;\delta_1,\dots,\delta_r)$$
for some $a_0,\dots,a_r\in N$ and $\delta_1,\dots,\delta_r>0$.
Moreover, if $m>0$, we can choose  $a_0,\dots,a_r,\delta_1,\dots,\delta_r$ above so that concatenating $\mathcal L$ with any $v\in\Sigma^*$ yields
$$[\mathcal Lv]_F=a_0+E(a_1,\dots,a_{r-1},a_r+F^\ell[v]_F;\delta_1,\dots,\delta_r)$$
where $\ell=\sum_{i=1}^{m+1}|u_i|$.
\end{lemma}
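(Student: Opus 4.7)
The plan is to expand $[W]_F$ explicitly for a generic word $W = u_1 w_1^{n_1} u_2 \cdots u_m w_m^{n_m} u_{m+1} \in \mathcal{L}$, converting the geometric sums produced by the Kleene stars into closed form via the surjectivity hypothesis, and then collect terms. The case $m=0$ is trivial with $r=0$ and $a_0 := [u_1]_F$. For $m \geq 1$, set $e_i := [u_i]_F$, $b_i := [w_i]_F$, $\ell_i := |u_i|$, and $\delta_i := |w_i|$. Surjectivity of $F-\id$ on $N$ (which, in the intended algebraic-group applications, for instance when $N = G(\Fq^{\alg})$ via Lang's theorem, also delivers surjectivity of $F^{\delta_i}-\id$) allows us to choose $c_i \in N$ with $(F^{\delta_i}-1)c_i = b_i$. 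This telescopes the Kleene-star contribution into
\begin{equation*}
[w_i^{n_i}]_F \;=\; \sum_{j=0}^{n_i-1} F^{j\delta_i}b_i \;=\; (F^{n_i\delta_i}-1)c_i.
\end{equation*}

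Applying the concatenation rule $[vw]_F = [v]_F + F^{|v|}[w]_F$, the contribution of $u_i$ sits at position $s_i := \sum_{j<i}(\ell_j + n_j\delta_j)$ while $w_i^{n_i}$ starts at $s_i + \ell_i$. Substituting the closed form for $[w_i^{n_i}]_F$ and regrouping by starting position, the $c_i$'s telescope to give
\begin{equation*}
[W]_F \;=\; (e_1 - F^{\ell_1}c_1) + \sum_{i=2}^{m} F^{s_i}\bigl(e_i + c_{i-1} - F^{\ell_i}c_i\bigr) + F^{s_{m+1}}(e_{m+1}+c_m).
\end{equation*}
Splitting $s_i = \eta_i + \sigma_i$ with $\eta_i := \sum_{j<i}\ell_j$ constant and $\sigma_i := \sum_{j<i}n_j\delta_j$ variable, and absorbing each $F^{\eta_i}$ into the parenthesised coefficient to produce elements $a_0,\dots,a_m \in N$ independent of $(n_1,\dots,n_m)$, this becomes
\begin{equation*}
[W]_F \;=\; a_0 + \sum_{k=1}^{m} F^{\sum_{j=1}^{k} n_j\delta_j}\, a_k.
\end{equation*}
As $(n_1,\dots,n_m)$ ranges over $\bN^m$, the right-hand side describes precisely $a_0 + E(a_1,\dots,a_m;\delta_1,\dots,\delta_m)$, yielding the first statement with $r = m$.

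For the moreover clause, concatenating a fixed $v \in \Sigma^*$ appends $F^{|W|}[v]_F$ to the expansion. Since $|W| = \ell + \sigma_{m+1}$ with $\ell := \sum_{i=1}^{m+1}\ell_i$ and $\sigma_{m+1} = \sum_{j=1}^m n_j\delta_j$ equal to the exponent multiplying $a_m$ above, this extra contribution is $F^{\sigma_{m+1}}(F^\ell[v]_F)$, and is absorbed by replacing $a_m$ with $a_m + F^\ell[v]_F$. The substantive point in the entire argument is the surjectivity of $F^{\delta_i}-\id$ needed to produce the $c_i$; everything else is careful bookkeeping of starting positions together with a telescoping sum.
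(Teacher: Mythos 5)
Your proof is correct and rests on the same mechanism as the paper's---converting the geometric sum produced by each Kleene star into closed form by solving $(F^{\beta}-\id)\theta=y$ in $N$---but the organization is genuinely different: the paper argues by induction on $m$, peeling off the first block $u_1 w_1^*$, applying the inductive hypothesis to $u_2 w_2^* \cdots u_{m+1}$, and telescoping once per inductive step (tracking the tail through the induction to get the ``moreover'' clause), whereas you expand a generic word $W$ in full, regroup terms by starting position, and let all the $c_i$'s telescope in one pass. Both produce $r=m$ and the same $\delta_i=|w_i|$. Your direct route is bookkeeping-heavier in the first step but makes the ``moreover'' clause more transparent, since $F^{|W|}[v]_F$ visibly lands on the final coefficient with the constant shift $F^{\ell}$ factored out; the paper's inductive version localizes the verification but has to thread the ``moreover'' clause through the induction hypothesis.

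One point worth flagging, which applies equally to the paper's own proof: the lemma hypothesizes only that $F-\id$ is surjective on $N$, but both arguments actually invoke surjectivity of $F^{\beta}-\id$ for $\beta=|w_i|$. This does not follow from surjectivity of $F-\id$ alone: for example on $N=\mathbb{Q}[x]/(x^2+x+1)$ with $F$ multiplication by $x$, the map $F-\id$ is invertible while $F^3-\id=0$. In the intended application $N$ is (the points of) a commutative algebraic group, where every $F^{\beta}-\id$ has finite kernel and is therefore surjective, so nothing breaks downstream; you note this parenthetically, which is the right instinct, but strictly speaking the stated hypothesis should be strengthened to surjectivity of $F^{n}-\id$ for all $n>0$ (or the surjectivity should be established in the proof from the structure of $N$ rather than taken as given).
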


\begin{proof}
We proceed by induction on $m$.
If $m=0$ then $a_0=[u_1]_F$ and $r=0$ works.

So assume $m>0$ and write $\mathcal L$ as the concatenation $\mathcal L_1\mathcal L_2$ where $\mathcal L_1=u_1w_1^*$ and $\mathcal L_2=u_2 w_2^* \cdots u_m w_m^* u_{m+1}$.
By the induction hypothesis,
\begin{equation}
\label{l2}
[\mathcal L_2]_F=a_0+E(a_1,\dots,a_r;\delta_1,\dots,\delta_r)
\end{equation}
for some $a_0,\dots,a_r$ from $N$ and positive integers $\delta_1,\dots,\delta_r$.
Let $|u_1|=\alpha$, $|w_1|=\beta$, $x=[u_1]_F$, and $y=[w_1]_F$.
The elements of $[\mathcal L]_F$ are precisely those of the form
$[u_1w_1^n]_F+ F^{\alpha+n\beta}(z)$ with $z\in [\mathcal L_2]_F$.
Note that 
$$[u_1w_1^n]_F+ F^{\alpha+n\beta}(z)
=x+F^{\alpha}(y)+F^{\alpha+\beta}(y)+\cdots + F^{\alpha+(n-1)\beta}(y) + F^{\alpha+n\beta}(z).$$
Now, since $F-\id$ is surjective on $N$, there exists
$\theta\in N$ such that $F^{\alpha}(y) = F^{\beta}(\theta)-\theta$.
A simple telescoping argument gives $$[a_1b_1^n]_F+ F^{\alpha+n\beta}(z) = x-\theta+F^{\beta n}(\theta + F^{\alpha}(z)).$$
Setting $\gamma:=x-\theta$ we get that
\begin{eqnarray*}
[\mathcal L]_F
&=&
\bigcup_{z\in [\mathcal L_2]_F}\bigcup_{n\geq 0}\{\gamma+F^{\beta n}(\theta+F^{\alpha}(z))\}\\
&=&
\gamma+\bigcup_{n\geq 0}F^{\beta n}(\theta+F^{\alpha}([\mathcal L_2]_F))\\
&=&
\gamma+\bigcup_{n\geq 0}F^{\beta n}(\theta+F^{\alpha}a_0+E(F^{\alpha}a_1,\dots,F^{\alpha}a_r;\delta_1,\dots,\delta_r))\\
&=&
\gamma+E(\theta+F^\alpha a_0,F^\alpha a_1,\dots,F^\alpha a_r;\beta,\delta_1,\dots,\delta_r)
\end{eqnarray*}
as desired.

For the ``moreover" clause we must first consider the $m=1$.
In that case $\mathcal L_2=\{u_2\}$ and we have
\begin{eqnarray*}
[\mathcal L]_F
&=&
\bigcup_{n\geq 0}\{\gamma+F^{\beta n}(\theta+F^{\alpha}[u_2]_F)\}\\
&=&
\gamma+E(\theta+F^{\alpha}[u_2]_F;\beta)
\end{eqnarray*}
and for any $v\in\Sigma^*$, 
\begin{eqnarray*}
[\mathcal Lv]_F
&=&
\bigcup_{n\geq 0}\{\gamma+F^{\beta n}(\theta+F^{\alpha}[u_2v]_F)\}\\
&=&
\gamma+E(\theta+F^{\alpha}[u_2]_F+F^{\alpha+|u_2|}[v]_F;\beta).
\end{eqnarray*}
Since $\alpha+|u_2|=|u_1|+|u_2|$ this proves the ``moreover" clause when $m=1$.

Suppose $m>1$.
The inductive hypothesis yields that in~(\ref{l2}) we can choose $a_0,\dots,a_r,\delta_1,\dots,\delta_r$ so that for any $v\in\Sigma^*$
$$[\mathcal L_2v]_F=a_0+E(a_1,\dots,a_{r-1},a_r+F^\ell[v]_F;\delta_1,\dots,\delta_r)$$
where $\ell=\sum_{i=2}^{m+1}|u_i|$. 
Hence
\begin{eqnarray*}
[\mathcal Lv]_F
&=&
\gamma+\bigcup_{n\geq 0}F^{\beta n}(\theta+F^{\alpha}([\mathcal L_2v]_F))\\
&=&
\gamma+\bigcup_{n\geq 0}F^{\beta n}(\theta+F^{\alpha}a_0+E(F^{\alpha}a_1,\dots, F^{\alpha}(a_r+F^\ell[v]_F);\delta_1,\dots,\delta_r))\\
&=&
\gamma+E(\theta+F^\alpha a_0,F^\alpha a_1,\dots,F^\alpha a_{r-1},F^{\alpha}a_r+F^{\alpha+\ell}[v]_F;\beta,\delta_1,\dots,\delta_r).
\end{eqnarray*}
As $\alpha=|u_1|$, this proves the ``moreover'' clause.
\end{proof}

\begin{remark}
Note that while $[\mathcal L]_F\subseteq M$ we have to go to an extension to find $a_0,\dots,a_r$.
In the proof this comes from the induction step where $x$ and $y$ are in $M$ but we have to pass to $N$ to find $\gamma$ and $\theta$.
\end{remark}

The sets $E(a_1,\dots,a_r;\delta_1,\dots,\delta_r)$, despite some superficial similarity to translates of sums of $F$-orbits, are not expressible as finite unions of such.
However, in our geometric context, they can be so expressed up to Zariski closure.

\begin{proposition}
\label{et}
Suppose $G$ is a commutative algebraic group over $\Fq$, $F:G\to G$ is the $q$-power Frobenius, $K$ is a function field over $\Fq$, and $\Gamma\leq G(K)$ is an $F$-pure $\zf$-submodule.
Suppose $E:=a_0+E(a_1,\dots,a_r;\delta_1,\dots,\delta_r)\subseteq \Gamma$ where $a_0,\dots,a_r\in G$ and $\delta_1,\dots,\delta_r$ are positive integers.
Then there exists $T\subseteq \Gamma$, a finite union of sets from $\mathcal S(G,F)$, such that $E\subseteq T\subseteq\overline{E}$ where the over line denotes Zariski closure.
\end{proposition}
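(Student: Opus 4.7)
The plan is to first reduce to the case where all the $\delta_i$ agree, and then to show that the natural sum of $F$-orbits both contains $E$ and sits inside $\overline{E}$, with the latter being the main geometric input.

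I would set $\delta := \operatorname{lcm}(\delta_1,\dots,\delta_r)$ and $d_i := \delta/\delta_i$, and write each $n_i$ uniquely as $n_i = d_i q_i + r_i$ with $q_i \geq 0$ and $0 \leq r_i < d_i$. This splits $E$ into finitely many pieces $E_{(r_1,\dots,r_r)}$ indexed by the residue tuples. On each piece, the exponent of $a_i$ becomes $R_i + \delta Q_i$, where $R_i := \sum_{j \leq i} r_j\delta_j$ is a fixed integer and $Q_i := \sum_{j \leq i} q_j$ is an arbitrary nondecreasing sequence of nonnegative integers. Setting $c_i := F^{R_i}a_i$, one obtains $E_{(r_1,\dots,r_r)} = a_0 + E(c_1,\dots,c_r;\delta)$ with common step $\delta$. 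Since $T$ is allowed to be a finite union, it suffices to treat the common-step case.

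Assume now $\delta_1 = \cdots = \delta_r = \delta$, and set $T := a_0 + S(a_1;\delta) + \cdots + S(a_r;\delta) \in \mathcal S(G,F)$. The inclusion $E \subseteq T$ is immediate, since the tuple of exponents $(n_1, n_1+n_2, \dots, n_1+\cdots+n_r)$ appearing in $E$ is a particular nondecreasing sequence while $T$ permits arbitrary nonnegative exponent tuples. For $T \subseteq \Gamma$, I would compare the element of $E$ at $(0,\dots,0)$, namely $a_0 + a_1 + \cdots + a_r \in \Gamma$, with the elements at $(0,\dots,0,1,0,\dots,0)$ (with the $1$ in various positions), take differences, and telescope to deduce $(F^\delta - 1)a_i \in \Gamma$ for each $i$; the $\zf$-module structure then yields $(F^{\delta k}-1)a_i = \sum_{j=0}^{k-1}F^{\delta j}(F^\delta-1)a_i \in \Gamma$ for all $k$, and a general element of $T$ rewrites as $(a_0 + \sum_i a_i) + \sum_i(F^{\delta k_i}-1)a_i \in \Gamma$.

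The geometric core is $T \subseteq \overline{E}$, which I would reduce to the identity
\[
\overline{E(a_1,\dots,a_r;\delta)} \;=\; \overline{S(a_1;\delta) + \cdots + S(a_r;\delta)}
\]
and prove by induction on $r$. Using the recursive description $E(a_1,\dots,a_r;\delta) = \bigcup_{n \geq 0} F^{\delta n}\bigl(a_1 + E(a_2,\dots,a_r;\delta)\bigr)$ and setting $W' := \overline{E(a_2,\dots,a_r;\delta)}$, the inductive hypothesis identifies $W'$ with $\overline{S(a_2;\delta)+\cdots+S(a_r;\delta)}$. Each $S(a_i;\delta)$ is mapped into itself by $F^\delta$, so $F^\delta W' \subseteq W'$; since an injective endomorphism of a finite-type variety over an algebraically closed field is necessarily surjective, in fact $F^\delta W' = W'$. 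Consequently $F^{\delta n}(a_1+W') = F^{\delta n}a_1 + W' \subseteq \overline{E(a_1,\dots,a_r;\delta)}$ for every $n$, and taking the union over $n$ followed by Zariski closure gives $\overline{S(a_1;\delta)+W'} = \overline{\sum_{i=1}^r S(a_i;\delta)} \subseteq \overline{E(a_1,\dots,a_r;\delta)}$, closing the induction.

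The principal obstacle is the mismatch between the nested exponents in $E(a_1,\dots,a_r;\delta)$ (with $a_i$ carrying the exponent $n_1 + \cdots + n_i$) and the independent exponents in the sum of orbits $S(a_1;\delta) + \cdots + S(a_r;\delta)$. The two sets are not equal on the nose; they coincide only after passing to the Zariski closure, and it is precisely the $F^\delta$-invariance of orbit closures that effectively decouples the exponents.
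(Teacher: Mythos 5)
Your proof is correct, and it takes a genuinely different and in fact simpler route than the paper's. Both arguments begin with the same reduction to a common step $\delta$, and both extract $(F^\delta-\mathrm{id})a_i\in\Gamma$ by the same telescoping of differences of elements of $E$. The divergence is in the choice of $T$ and in the inductive argument for $T\subseteq\overline E$. The paper takes the \emph{larger} set $T=a_0+\sum_i\{F^na_i:n\ge -j_i\}$, where $j_i$ is maximal with $Fa_i-a_i\in G(K^{p^{j_i}})$; the negative powers $F^{-\ell}a_i$ emerge naturally from the paper's decomposition $E=\bigcup_{m\ge0}E_m$ with $E_m=a_0+F^ma_1+E(F^ma_2,\dots,F^ma_r;1)$, and it is precisely for those negative powers that the $F$-purity hypothesis on $\Gamma$ is invoked. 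You instead take the natural $T=a_0+\sum_i S(a_i;\delta)$ and prove $\overline{\sum_i S(a_i;\delta)}\subseteq\overline E$ directly by induction, using the recursion $E(a_1,\dots,a_r;\delta)=\bigcup_{n\ge0}F^{n\delta}\bigl(a_1+E(a_2,\dots,a_r;\delta)\bigr)$ together with $F^\delta W'=W'$ for the closure $W'$ of the tail. This cleanly decouples the nested exponents without ever leaving nonnegative powers of $F$, and as a consequence your argument never uses $F$-purity or the embedding $\Gamma\le G(K)$ at all --- a genuine, if modest, strengthening of the hypotheses. Your $T$ is exactly the set that the paper's Remark~\ref{et-better} observes is sandwiched between $E$ and $\overline E\cap\Gamma$; the paper derives that remark as a corollary of its larger $T$, whereas you prove it outright. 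Two small expository caveats: the claim ``an injective endomorphism of a finite-type variety is surjective'' is invoking Ax--Grothendieck for $F^\delta|_{W'}$, which is fine, but a quicker and more elementary justification is the Noetherian descending chain $W'\supseteq F^\delta W'\supseteq F^{2\delta}W'\supseteq\cdots$ of closed sets together with the fact that $F$ is a Zariski homeomorphism; and the identity $\overline{S(a_1;\delta)+W'}=\overline{\sum_{i=1}^rS(a_i;\delta)}$ deserves the one-line remark that $\overline{A+\overline B}=\overline{A+B}$ because translations are homeomorphisms.
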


\begin{proof}
We first consider the case when $E=a_0+E(a_1,\dots,a_r;1)$; that is, when all the $\delta_i=1$.

Note that, as $E\subseteq\Gamma$, we have $Fa_i-a_i\in\Gamma$ for each $i=1,\dots,r$.
Indeed, for $i=r$ this is a because both $a_0+\cdots+a_{r-1}+Fa_r$ and $a_0+\cdots+a_r$ are in $E$ and hence in $\Gamma$, and one simply takes their difference.
For $i=r-1$ one notes that $a_0+\cdots+a_{r-2}+Fa_{r-1}+Fa_r\in E$ and so subtracting $a_0+\cdots+a_r$ yields that $(Fa_{r-1}-a_{r-1})+(Fa_r-a_r)\in \Gamma$, and hence by the $i=r$ case $Fa_{r-1}-a_{r-1}\in \Gamma$.
And so on all the way down to $i=1$.

It follows, by applying $F$ and taking sums, that $F^ma_i-a_i\in \Gamma$ for all $m\geq 0$.

For each $i=1,\dots,r$, let $j_i\geq 0$ be maximal such that $Fa_i-a_i\in G(K^{p^{j_i}})$, if it exists, and set $j_i=\infty$ otherwise.
Let
$$T:=a_0+\sum_{i=1}^r\{F^na_i:n\geq-j_i\}$$
Note that if $j_i<\infty$ then $\{F^na_i:n\geq-j_i\}$ is the $F$-orbit $S(F^{-j_i}a_i;1)$.
On the other hand, if $j_i=\infty$ then $Fa_i-a_i$, and hence $F^na_i-a_i$ for all $n\in\mathbb Z$, is in the finite group $G(K\cap\Fq^{\alg})$.
Hence, in that case, $\{F^na_i:n\geq-j_i\}$ is finite.
So $T$ is a finite union of sets from $\mathcal S(G,F)$.
It is also clear that $E\subseteq T$.

We claim that $T\subseteq\overline{E}$ and that $T\subseteq\Gamma$.
We proceed by induction on $r\geq 1$.

Suppose $r=1$.
Then $E=a_0+S(a_1;1)$.
Hence the Zariski closed set $\overline E-a_0$ is $F$-invariant, and so also $F^{-1}$-invariant.
Since $a_1\in\overline E-a_0$ we have that $F^na_1\in \overline E-a_0$ for all $n\in\mathbb Z$.
In particular, $T=a_0+\{F^na_1:n\geq-j_1\}\subseteq\overline E$.

Next, still in the case of $r=1$, we show that $T\subseteq\Gamma$.
Since $E\subseteq\Gamma$, it suffices to show that $a_0+F^{-\ell}a_1\in\Gamma$ for $0<\ell\leq j_1$.
But we have that $Fa_1-a_1\in G(K^{p^{j_i}})$, so applying $F^{-1}$ repeatedly we get that $a_1-F^{-1}a_1,\dots,F^{-\ell+1}a_1-F^{-\ell}a_1\in G(K)$, and summing these telescopes to $a_1-F^{-\ell}a_1$.
Hence, $a_1-F^{-\ell}a_1\in G(K)$ and $F^{\ell}(a_1-F^{-\ell}a_1)=F^{\ell}a_1-a_1\in\Gamma$.
By $F$-purity of $\Gamma$ in $G(K)$, $a_1-F^{-\ell}a_1\in\Gamma$.
As $a_0+a_1\in E\subseteq\Gamma$, we thus have $a_0+F^{-\ell}a_1\in\Gamma$, as desired.

Assume now that $r>1$.
For each $m\geq 0$, consider
$$E_m:=a_0+F^ma_1+E(F^ma_2,\dots,F^ma_r;1).$$
So $\displaystyle E=\bigcup_{m\geq 0}E_m$.
Note that for each $i=2,\dots r$, if $j_i<\infty$ then
$$F(F^ma_i)-F^ma_i=F^m(Fa_i-a_i)\in G(K^{p^{j_i+m}})$$
and $m+j_i$ is maximal such.
If $j_i=\infty$ then $\displaystyle F(F^ma_i)-F^ma_i\in\bigcap_{j\geq 0}G(K^{p^j})$.
Hence the induction hypothesis yields that if we let
$$T_m:=a_0+F^ma_1+\sum_{i=2}^r\{F^{n}(F^ma_i):n\geq-j_i-m\}$$
then $T_m\subseteq\overline{E_m}\subseteq\overline E$ and $T_m\subseteq\Gamma$.
Note that
$$T_m=a_0+F^ma_1+\sum_{i=2}^r\{F^{n}a_i:n\geq-j_i\}$$
and $\displaystyle\bigcup_{m\geq 0}T_m\subseteq T$.
But we do not get all of $T$ in this way because only nonnegative $F$-iterates of $a_1$ appear in this union.

To show that $T\subseteq\overline E$ consider the Zariski closed set
$$Z:=\bigcap_{n_2\geq-j_2,\dots,n_r\geq -j_r}(\overline E-a_0-\sum_{i=2}^rF^{n_i}_ia_i).$$
Since $T_m\subseteq\overline E$ we have that $a_0+F^ma_1+ \sum_{i=2}^rF^{n_i}_ia_i \in\overline E$ for any fixed tuple of integers  $n_2\geq-j_2,\dots,n_r\geq -j_r$.
That is, $F^ma_1\in Z$ for all $m\geq 0$.
Hence $\overline{S(a_1;1)}\subseteq Z$.
But $\overline{S(a_1;1)}$ is an $F$-invariant, and hence $F^{-1}$-invariant, Zariski closed set.
So $F^\ell a_1\in Z$ for all integers $\ell $, in particular for all $\ell \geq-j_i$.
Hence $\displaystyle a_0+F^\ell a_1+\sum_{i=2}^r\{F^{n}a_i:n\geq-j_i\}\subseteq\overline E$ for all $\ell \geq-j_1$.
That is, $T\subseteq\overline E$.

Next, we show that $T\subseteq\Gamma$.
Since we already know that $T_m\subseteq\Gamma$ for all $m\geq 0$, it remains only to show that $\displaystyle a_0+F^{-\ell}a_1+\sum_{i=2}^r\{F^{n}a_i:n\geq-j_i\}\subseteq\Gamma$ for all $0<\ell\leq j_1$.
Fix integers $n_2\geq-j_2,\dots,n_r\geq -j_r$.
Note that $a_0+a_1+\sum_{i=2}^rF^{n_i}_ia_i\in T_0\subseteq\Gamma$.
But, from the argument in the $r=1$ case, using $F$-purity, we also know that $a_1-F^{-\ell}a_1\in\Gamma$.
Subtracting, we get $a_0+F^{-\ell}a_1+\sum_{i=2}^rF^{n_i}_ia_i\in\Gamma$, as desired.

We have completed the proof of the proposition in the case when all the $\delta_i=1$.
It remains to explain how to reduce to that case.
Let $\delta:=\operatorname{lcm}\{\delta_1,\dots,\delta_r\}$.
Then
$a_0+E(a_1,\dots,a_r;\delta_1,\dots,\delta_r)$ is equal to the union of
$$a_0+E(F^{m_1\delta_1}a_1,F^{m_1\delta_1+m_2\delta_2}a_2,\dots,F^{\sum_{i=1}^rm_i\delta_i}a_r;\delta)$$
as each $m_i$ ranges in the finite set $\{0,1,\dots,\frac{\delta}{\delta_i}\}$.
Hence it suffices to prove the proposition when all the $\delta_i=\delta$.
Finally, to deal with this case, noting that $\Gamma$ is an $F^\delta$-pure $\mathbb{Z}[F^\delta]$-submodule of $G(K)$, we can apply the already proved case of the proposition to $F^\delta$.
Since $\mathcal S(G,F^\delta)\subseteq \mathcal S(G,F)$, this suffices.
\end{proof}

\begin{remark}
\label{et-better}
The proof of Proposition~\ref{et} shows (under the same hypothesis) that if $E=a_0+E(a_1,\dots,a_r;1)\subseteq\Gamma$ then
$E\subseteq a_0+S(a_1,\dots,a_r;1)\subseteq\overline E\cap\Gamma$.
Indeed, clearly $E\subseteq a_0+S(a_1,\dots,a_r;1)\subseteq a_0+\sum_{i=1}^r\{F^na_i:n\geq-j_i\}=:T$, and it was shown in the proof that $T\subseteq\overline E\cap\Gamma$.
\end{remark}

\begin{corollary}
\label{iml-sparse}
Suppose $G$ is a commutative algebraic group over $\Fq$, $F:G\to G$ is the $q$-power Frobenius, $K$ is a function field extension of $\Fq$, $\Gamma\leq G(K)$ is an $F$-pure $\zf$-submodule, and $\Sigma\subseteq\Gamma$ is finite.
Suppose $\mathcal{L}\subseteq \Sigma^*$ is sparse.
Then there exists $T\subseteq\Gamma$, a finite union of elements of $\mathcal S(G,F)$, such that $[\mathcal L]_F\subseteq T\subseteq\overline{[\mathcal L]_F}$.
\end{corollary}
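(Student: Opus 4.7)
The plan is to reduce the corollary to two already-proven ingredients: the decomposition of sparse languages into simple sparse pieces (recalled just before Lemma~\ref{expandsparse}), the description of $F$-expansions of simple sparse languages in Lemma~\ref{expandsparse}, and the geometric comparison in Proposition~\ref{et}.

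First I would observe that the conclusion is preserved by finite unions. Indeed, if $\mathcal L = \mathcal L_1 \cup \cdots \cup \mathcal L_k$ and for each $i$ there is a set $T_i \subseteq \Gamma$, a finite union of elements of $\mathcal S(G,F)$, with $[\mathcal L_i]_F \subseteq T_i \subseteq \overline{[\mathcal L_i]_F}$, then $T := T_1 \cup \cdots \cup T_k$ is again a finite union of elements of $\mathcal S(G,F)$, contains $[\mathcal L]_F$, and (since Zariski closure commutes with finite unions) is contained in $\overline{[\mathcal L]_F}$. Because every sparse language is a finite union of simple sparse languages of the form $u_1w_1^*u_2w_2^*\cdots u_m w_m^* u_{m+1}$, it therefore suffices to treat the simple sparse case.

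Next I would apply Lemma~\ref{expandsparse} to $(\Gamma,F)$ with ambient extension $N := G$ (viewed as the group of points over a sufficiently large algebraically closed field). The hypothesis that $F - \id$ is surjective on $N$ holds for any commutative algebraic group over an algebraically closed field in positive characteristic: $F - \id$ has differential $-\id$, so it is \'etale, hence surjective on each connected component, and the identification $G/G^0 \to G/G^0$ induced by $F - \id$ is readily seen to be surjective as $F$ permutes the finite group of components. (The case $m = 0$ is trivial: $[\mathcal L]_F$ is a singleton, which already lies in $\mathcal S(G,F)$.) The lemma then delivers $a_0, a_1, \dots, a_r \in G$ and positive integers $\delta_1, \dots, \delta_r$ with
\[
[\mathcal L]_F \;=\; a_0 + E(a_1,\dots,a_r;\delta_1,\dots,\delta_r).
\]

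Finally, since $[\mathcal L]_F \subseteq \Gamma$ by construction, the hypothesis of Proposition~\ref{et} is met. That proposition produces a set $T \subseteq \Gamma$, expressed as a finite union of elements of $\mathcal S(G,F)$, satisfying $[\mathcal L]_F \subseteq T \subseteq \overline{[\mathcal L]_F}$, which is exactly the required conclusion.

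No new ideas are needed; the whole argument is a packaging exercise. The only points requiring any care are the finite-union reduction (routine, using that Zariski closure commutes with finite unions) and the choice of extension $N$ in which $F - \id$ is surjective, both of which are standard. The substantive work has been done in Lemma~\ref{expandsparse} and Proposition~\ref{et}.
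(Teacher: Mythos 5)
Your proof is essentially the same as the paper's: decompose the sparse language into simple sparse pieces, apply Lemma~\ref{expandsparse} with $M=\Gamma$, $N=G$, and then apply Proposition~\ref{et}. The one place you elaborate beyond the paper --- justifying that $F-\id$ is surjective on $G$ via the component group --- is not quite right as stated: if $F$ acts trivially on $\pi_0(G)$ (e.g.\ when $\pi_0(G)$ is a constant group scheme over $\Fq$) then $F-\id$ induces the zero map on $\pi_0(G)$, so ``$F$ permutes the components'' does not give surjectivity there; the paper's own terse ``finite kernel'' justification also silently assumes $G$ is connected, where finite kernel plus equidimensionality does give surjectivity, and that is the case that matters here.
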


\begin{proof}
By~\cite[Proposition~7.1]{fsets-SML}, $\mathcal L$ is a finite union of simple sparse languages.
Apply the main clause of Lemma~\ref{expandsparse} to each of these with $M=\Gamma$ and $N=G$, noting that $F-\id$ is a surjective endomorphism of the algebraic group $G$ because it has finite kernel.
So $[\mathcal L]_F$ is a finite union of sets of the form $a_0+E(a_1,\dots,a_r;\delta_1,\dots,\delta_r)$, each lying in $\Gamma$ but with the $a_i$'s from $G$.
Now apply Proposition~\ref{et}.
\end{proof}

\medskip
\subsection{Intersecting with certain submodules}
In order to reduce to the $F$-pure setting, as required in the previous results, we will require the following general property of translates of sums of $F$-orbits.

\begin{lemma}
\label{intsub}
Suppose $(M,F)$ is an abelian group equipped with an injective endomorphism and $A\leq B\leq M$ are $\zf$-submodules with $F^{\ell}B\leq A$ for some $\ell\geq 0$.
If $S\subseteq B$ is from $\mathcal S(M,F)$ then $S\cap A$ is a finite union of sets from $\mathcal S(M,F)$.
\end{lemma}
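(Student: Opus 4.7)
My approach is to decompose $S$ into finitely many pieces, each itself in $\mathcal S(M,F)$, and then to show that each piece is either entirely inside $A$ or entirely disjoint from $A$. Write $S = a + S(a_1;\delta_1) + \cdots + S(a_r;\delta_r)$ and, to exploit the hypothesis $F^\ell B \leq A$, set $n_i^\ast := \lceil \ell/\delta_i\rceil$ so that $n_i^\ast\delta_i \geq \ell$ for each $i$.

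Splitting each exponent range as $\mathbb{N} = \{0,\ldots,n_i^\ast-1\}\sqcup(n_i^\ast + \mathbb{N})$ and collecting terms exhibits $S$ as a finite union of sets of the form $T_{J,(m_i)} := c_{J,(m_i)} + S\bigl((F^{n_i^\ast\delta_i}a_i)_{i\in J};\,(\delta_i)_{i\in J}\bigr),$ where $J$ ranges over subsets of $\{1,\ldots,r\}$, the tuple $(m_i)_{i\notin J}$ ranges over $\prod_{i\notin J}\{0,\ldots,n_i^\ast-1\}$, and $c_{J,(m_i)} := a + \sum_{i\notin J} F^{m_i\delta_i}a_i$. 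Each $T_{J,(m_i)}$ is itself a translate of a sum of $F$-orbits, hence in $\mathcal S(M,F)$, so it suffices to show that each such piece meets $A$ either in itself or in the empty set.

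For this I will argue that any two elements of a fixed piece $T_{J,(m_i)}$ are congruent modulo $A$. The key enabling observation is that the hypothesis $S \subseteq B$ forces $F^{n\delta_i}a_i - a_i \in B$ for every $i$ and every $n \geq 0$: it equals the difference of two elements of $S$, obtained by toggling the $i$-th exponent between $n$ and $0$ while keeping all others at $0$. Now for two tuples $(k_i)_{i\in J}$ and $(k_i')_{i\in J}$ producing elements $s, s' \in T_{J,(m_i)}$, one rewrites
\[ s - s' = \sum_{i\in J} F^{n_i^\ast\delta_i}\bigl((F^{k_i\delta_i}a_i - a_i) - (F^{k_i'\delta_i}a_i - a_i)\bigr). \]
Each inner bracket lies in $B$, so each summand lies in $F^{n_i^\ast\delta_i}B \subseteq F^\ell B \leq A$. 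Hence $s - s' \in A$, and the claim follows.

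The only real subtlety is the one just handled: although $S \subseteq B$ is given, one has no direct access to the individual generators $a_i$ as elements of $B$. The decomposition is designed precisely so that within a single piece $T_{J,(m_i)}$ the $a_i$'s appear only through such $B$-controlled increments, pre-multiplied by a power of $F$ of exponent at least $\ell$, which is exactly what pushes the increment into $A$. With the claim in hand, $S \cap A$ is the union of those $T_{J,(m_i)}$ for which some (equivalently, every) representative lies in $A$, which is the desired description.
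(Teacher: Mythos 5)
Your proof is correct, and it takes a genuinely different route from the paper's. Both proofs hinge on the same key observation — that $S\subseteq B$ forces $F^{n\delta_i}a_i-a_i\in B$ for all $n\geq 0$, which after applying a power of $F$ of exponent at least $\ell$ lands you in $A$ — but the way each proof exploits this differs. The paper proceeds by induction on the number $r$ of $F$-orbits: it defines auxiliary truncations $S_{j,N}$ (fixing the $j$-th exponent to be less than $N$) and splits into two cases depending on whether $S\cap A$ is already captured by some $S_{j,N}$; if not, it finds a single element of $S\cap A$ with all exponents $\geq \ell$ and shows the whole tail beyond that point lies in $A$. You instead make a one-shot, non-inductive decomposition of $S$ into the finitely many pieces $T_{J,(m_i)}$ determined by which exponents are ``small'' ($<n_i^\ast$) and which are ``large'' ($\geq n_i^\ast$), observe that each piece is itself in $\mathcal S(M,F)$, and show that each piece is either contained in $A$ or disjoint from it, since any two of its elements differ by an element of $F^{n_i^\ast\delta_i}B\subseteq A$. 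Your argument is more uniform and avoids the induction entirely, at the cost of introducing a potentially larger (but still finite) union; the paper's inductive approach yields a somewhat more economical union but requires the case analysis. Both are correct and yield the stated conclusion.
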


\begin{proof}
Write $S=a_0+S(a_1,\dots,a_r;\delta_1,\dots,\delta_r)$.
Note, first of all, that as $S\subseteq B$, we have $F^{\delta_i}a_i-a_i\in B$ for all $i=1,\dots,r$.
Indeed,
$$F^{\delta_i}a_i-a_i= (a_0+a_1+\cdots+F^{\delta_i}a_i+\cdots a_r)-(a_0+a_1+\cdots+a_i+\cdots a_r).$$
It follows that $F^{n\delta_i}a_i-a_i\in B$, for all $n\geq 0$.
This in turn implies, using the fact that $F^\ell B\leq A$, that
\begin{equation}
\label{lmn}
F^{(\ell+m)\delta_i}a_i-F^{(\ell+n)\delta_i}a_i\ \in \ A
\end{equation}
for all $m\geq n\geq 0$ and $i=1,\dots,r$.

We now proceed to prove the lemma by induction on $r\geq 1$, dealing with the base case and induction step at the same time.

For each $j=1,\dots,r$ and $N> 0$, consider the subset of $S$ given by
\begin{eqnarray*}
S_{j,N}
&:=&
\{a_0+\sum_{i=1}^rF^{n_i\delta_i}a_i:n_1,\dots, n_r\geq 0\text{ and }n_j<N\}\\
&=&\bigcup_{m=0}^{N-1} a_0+F^{m\delta_j}a_j+S(a_1,\dots,a_{j-1},a_{j+1},\dots,a_r;\delta_1,\dots,\delta_{j-1},\delta_{j+1},\delta_r).
\end{eqnarray*}
If $r=1$ then $S_{j,N}$ is finite and hence so is $S_{j,N}\cap A$, and if $r>1$ then $S_{j,N}\cap A$ is a finite union of sets from $S(G,F)$ by induction.
Hence, if $S\cap A=S_{j,N}\cap A$ for some $j$ and $N$ then we are done.
We thus assume this never happens.
This means, in particular, that there is an element of $S\cap A$ of the form $a_0+\sum_{i=1}^r F^{n_i\delta_i}a_i$ for some $n_1,\dots,n_r\geq\ell$.
Using ~(\ref{lmn}) it follows that 
$$T:=a_0+S(F^{n_1\delta_1}a_1,\dots,F^{n_r\delta_r}a_r;\delta_1,\dots,\delta_r)\subseteq A.$$
Hence,
$\displaystyle S\cap A=T\cup\bigcup_{i=1}^r (S_{i,n_i}\cap A)$ which is a finite union of sets from $\mathcal S(G,F)$.
\end{proof}

\medskip
\subsection{Proof of Theorem~\ref{iml-module}}
Fix $(G,\Fq,F, X,\Gamma)$ as in the statement of the theorem.
For convenience, let us denote by $\mathcal F\subseteq\mathcal P(\Gamma)$ the collection of finite unions of sets of the form $S+\Lambda$ where $S\subseteq\Gamma$ is from $\mathcal S(G,F)$ and $\Lambda\leq\Gamma$ is an $\mathbb Z[F^r]$-submodule for some $r>0$.
We therefore wish to show that $X\cap\Gamma\in\mathcal F$

Clearly we may assume that $X$ is irreducible.
We proceed by induction on $d:=\dim X$.
The case of $d=0$ being clear, we assume $d>0$.

We start with a series of reductions.
Fix a function field extension $K$ of $\Fq$ such that $\Gamma\leq G(K)$ and $X$ is over~$K$.
By induction, we may assume that $X(K)$ is Zariski dense in $X$.

\begin{reduction}
\label{fpure}
We may assume that $\Gamma$ is $F$-pure in $G(K)$.
\end{reduction}

\begin{proof}
Let $\tilde\Gamma$ be the $F$-pure hull of $\Gamma$ in $G(K)$.
By Proposition~\ref{prop:commutative}, there is $r\geq 0$ such that $F^r\tilde\Gamma\leq\Gamma$.
In particular, $\tilde\Gamma$ is finitely generated as a $\zf$-submodule.
We wish to show that if the theorem holds for $\tilde\Gamma$ then it holds for $\Gamma$.
Suppose therefore that $\displaystyle X\cap\tilde\Gamma=\bigcup_{i=1}^\ell S_i+\Lambda_i$ where each $S_i\subseteq\tilde\Gamma$ is in $\mathcal S(G,F)$ and $\Lambda_i\leq\tilde\Gamma$ is a $\mathbb Z[F^{r_i}]$-submodule for some $r_i>0$.
By Theorem~\ref{spanning}, $\Lambda_i/F^r\Lambda_i$ is finite, and so in the above expression for $X\cap\tilde\Gamma$ we may replace the $\Lambda_i$ by $F^r\Lambda_i$.
That is, since $F^r\Lambda_i\leq\Gamma$, we may assume that all the $\Lambda_i\leq\Gamma$.
Hence
\begin{eqnarray*}
X\cap\Gamma
&=&
(X\cap\tilde\Gamma)\cap\Gamma\\
&=&
\bigcup_{i=1}^\ell (S_i+\Lambda_i)\cap\Gamma\\
&=&
\bigcup_{i=1}^\ell (S_i\cap\Gamma+\Lambda_i)
\end{eqnarray*}
and we are done by Lemma~\ref{intsub} applied to $A=\Gamma$, $B=\tilde\Gamma$, and $M=G$.
\end{proof}

Let $E$ be the largest connected algebraic subgroup of $\operatorname{Stab}(X)$ that is defined over $\mathbb F_p^{\alg}$.
(As the class of connected algebraic subgroups over $\mathbb F_p^{\alg}$ is preserved by summation, this exists.)

\begin{reduction}
\label{span1}
We may assume that $\Gamma$ admits an $F$-spanning set $\Sigma$ such that
$\mathcal G:=\{(v,w)\in(\Sigma\times \Sigma)^* : [v]_F- [w]_F\in E\}$ is regular.
\end{reduction}

\begin{proof}
Let $\mu:G\times G\to G$ be the morphism $\mu(g,h)=g-h$.
By Corollary~\ref{iml-fauto-module}, $(\Gamma\times\Gamma)\cap\mu^{-1}(E)$ is $F$-automatic.
Hence, there is $r>0$ and an $F^r$-spanning set $\Sigma$ for $\Gamma$ such that
$$\{(v,w)\in(\Sigma\times \Sigma)^* : [v]_{F^r}- [w]_{F^r}\in E\}$$
is regular.
Since $\mathcal S(G,F^r)\subseteq\mathcal S(G,F)$, and $\Gamma$ remains $F^r$-pure in $G(K)$, it suffices to prove the theorem for $(G,\mathbb F_{q^r},F^r, X,\Gamma)$.
\end{proof}

Fix, therefore, such an $F$-spanning set $\Sigma$ for $\Gamma$.
We know that $X\cap\Gamma$ is $F$-automatic by Corollary~\ref{iml-fauto-module}.
Hence
$$\mathcal L:=\{w\in\Sigma^*:[w]_F\in X\}$$
is a regular language.
We also know by Lemma~\ref{unamb} applied to the equivalence relation of being in the same coset modulo $E$, that there is a regular language $\mathcal L_0\subseteq\Sigma^*$ such that for every for every $w\in \Sigma^*$ there is a unique $v\in \mathcal L_0$ with $|v|\leq |w|$ and $[v]_F- [w]_F\in E$.
Let
$$\mathcal E:=\{v\in\mathcal L_00^*:\text{ there is }w\in\mathcal L\text{ such that }|v|=|w|\text{ and }[v]_F-[w]_F\in E\}.$$
Note that $\mathcal E$ is regular as it is the projection onto the first co-ordinate of the regular language $\mathcal (L_00^*\times\mathcal L)\cap\mathcal G$.
It is easily verified that
\begin{equation}
\label{lf}
[\mathcal L]_F=X\cap\Gamma,
\end{equation}
\begin{equation}
\label{el}
\mathcal E\subseteq\mathcal L,
\end{equation}
\begin{equation}
\label{le}
\text{for every $w\in \mathcal L$ there is $v\in \mathcal E$ with $[v]_F- [w]_F\in E$, and}
\end{equation}
\begin{equation}
\label{!}
\text{for every $v_1,v_2\in \mathcal E$, if $|v_1|=|v_2|$ and $[v_1]_F- [v_2]_F\in E$ then $v_1=v_2$.}
\end{equation}

\begin{reduction}
\label{toE}
It suffices to prove that $[\mathcal E]_F\subseteq T\subseteq X$ for some $T\in\mathcal F$.
\end{reduction}

\begin{proof}
Since $E$ is defined over a finite field, $\Lambda:=E\cap\Gamma$ is a $\mathbb Z[F^r]$-submodule of $\Gamma$ for some $r>0$.
By~(\ref{le}) we have that $[\mathcal L]_F\subseteq [\mathcal E]_F+\Lambda$.
Suppose we have proved that $[\mathcal E]_F\subseteq T\subseteq X$  for some $T\in\mathcal F$.
Then $[\mathcal L]_F\subseteq T+\Lambda\subseteq X+\Lambda$.
Since $\Lambda\subseteq\operatorname{Stab}(X)$, this proves that $[\mathcal L]_F\subseteq T+\Lambda\subseteq X$.
Since $T+\Lambda\subseteq\Gamma$, we get from~(\ref{lf}) that $X\cap\Gamma=T+\Lambda$.
As $T+\Lambda\in\mathcal F$, this suffices.
\end{proof}

Let us denote by $\mathcal A$ the automaton that we built in~$\S$\ref{sect-automaton} recognising $\mathcal L$, and by $Z_1,\dots, Z_n$ the states of $\mathcal A$, with $Z_1=X(K)$ the starting state.
Recall that each $Z_i$ is of the form $(X-\gamma)^{q^{-\ell}}(K)$ for some $\gamma\in \Gamma$ which has an $F$-expansion of length~$\ell$.
Hence $\dim\overline{Z_i}\leq d$, and if $\dim\overline{Z_i}=d$ then $\overline{Z_i}=(X-\gamma)^{q^{-\ell}}$.
Reindexing, let us assume that $Z_1,\dots, Z_m$ are $d$-dimensional for some $1\leq m\leq n$ and that the rest are of dimension strictly less than $d$.
Since $X(K)$ is Zariski dense in $X$, we have $\dim\overline{Z_1}=d$, and hence such $m$ exists.

Given $w\in \Sigma^*$, we define $\delta(w) = (X-[w]_F)^{q^{-|w|}}(K)\in \{Z_1,\ldots ,Z_n\}$, where $|w|$ is the length of $w$.
So $\delta(w)$ is the state the machine ends up in on input $w$.
We first observe that once $\mathcal A$ is in a state of low dimension then it stays there:

\begin{lemma}
\label{stay>m}
Suppose $w\in\Sigma^*$.
If $\delta(w)\in \{Z_{m+1},\ldots ,Z_n\}$ then, for all $v\in \Sigma^*$, $\delta(wv)\in \{Z_{m+1},\ldots ,Z_n\}$.
\end{lemma}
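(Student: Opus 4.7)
The plan is to show that a single transition step in $\mathcal A$ can never increase the dimension of the Zariski closure of the state. Once this is established, the lemma follows immediately: if $\delta(w)$ has closure of dimension strictly less than $d$, then so does $\delta(wv)$ for every $v\in\Sigma^*$, by induction on the length of~$v$, and therefore $\delta(wv)\notin\{Z_1,\dots,Z_m\}$.

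For the key dimension inequality, fix a state $\delta(u)=(X-\gamma)^{q^{-\ell}}(K)$ where $\gamma=[u]_F$ and $\ell=|u|$, and let $x\in\Sigma$, so that the next state is $\delta(ux)=(X-\gamma-F^{\ell}x)^{q^{-(\ell+1)}}(K)$. Consider the morphism $\phi:G\to G$ defined by $\phi(a):=F(a)+x$. Exactly the chain of equivalences already used to verify that the transition function is well-defined (see the lemma preceding the construction of $\mathcal A$) shows that $\phi$ sends $\delta(ux)$ into $\delta(u)$: for $a\in G(K)$ one has
$$a\in (X-\gamma-F^{\ell}x)^{q^{-(\ell+1)}}\iff F^{\ell+1}a\in X-\gamma-F^{\ell}x\iff F(a)+x\in(X-\gamma)^{q^{-\ell}}.$$
Taking Zariski closures in $G$ gives $\phi(\overline{\delta(ux)})\subseteq\overline{\delta(u)}$.

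Now $\phi$ is the composition of the $q$-power Frobenius with translation by $x$. The Frobenius is a finite morphism and translation is an isomorphism, so $\phi$ is finite. In particular, it preserves dimensions of closed subvarieties, so $\dim\phi(\overline{\delta(ux)})=\dim\overline{\delta(ux)}$, yielding
$$\dim\overline{\delta(ux)}\leq\dim\overline{\delta(u)}.$$
Applying this inequality along the letters of $v$ and starting from $u=w$ gives the lemma. I do not foresee a serious obstacle here: the only mildly delicate point is keeping straight that the Zariski closure of a state is an invariant of the state itself (not of the particular expression $(X-\gamma)^{q^{-\ell}}(K)$ used to name it), but this is automatic since we are closing up the underlying subset of $G$.
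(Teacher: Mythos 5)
Your proof is correct and uses essentially the same idea as the paper: the Zariski closure of the state can only drop (or stay) in dimension along a transition, because the relevant Frobenius-and-translation map is dimension-preserving. The paper phrases this in one shot via the identity $(X-[wv]_F)^{q^{-|wv|}}=\big((X-[w]_F)^{q^{-|w|}}-[v]_F\big)^{q^{-|v|}}$ and the fact that $Y\mapsto (Y-[v]_F)^{q^{-|v|}}$ preserves dimension, while you do a single step with the forward morphism $\phi(a)=F(a)+x$ and induct on $|v|$, but these are the same argument in two directions.
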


\begin{proof}
Indeed, 
$(X-[wv]_F)^{q^{-|wv|}}=\big((X-[w]_F)^{q^{-|w|}}-[v]_F\big)^{q^{-|v|}}$.
So if $Y$ is the Zariski closure of $\delta(w)$ in $(X-[w]_F)^{q^{-|w|}}$ then the Zariski closure of $\delta(wv)$ is contained in
$(Y-[v]_F)^{q^{-|v|}}$ whose dimension is at most $\dim Y$.
\end{proof}

We now decompose $\mathcal E$ into certain auxiliary sublanguages:
For each $k\leq m$ and $\ell=m+1,\dots,n,$ let
\begin{eqnarray*}
\mathcal E_k
&:=&
\{w\in\mathcal E:\delta(w)=Z_k\},\\
\mathcal O_\ell
&:=&
\{wa:w\in\mathcal E_k\text{ for some $k\leq m$ and $a\in\Sigma$ moves $\mathcal A$ moves from $Z_k$ to $Z_\ell$}\},\\
\mathcal N_\ell
&:=&
\{v\in\Sigma^*: uv\in\mathcal E\text{ for some }u\in\mathcal O_\ell\},\\
\mathcal M_\ell
&:=&
\{v\in\Sigma^*:[v]_F\in \overline{Z_\ell}\}.
\end{eqnarray*}
Lemma~\ref{stay>m} together with~(\ref{el}) implies
\begin{equation}
\label{decomp}
\mathcal E=\bigcup_{k=1}^m\mathcal E_k\cup\bigcup_{\ell=m+1}^n\mathcal O_\ell\mathcal N_\ell.
\end{equation}

\begin{lemma}
\label{njmj}
For each $\ell=m+1,\dots,n$, $\mathcal N_\ell\subseteq\mathcal M_\ell$ and $[\mathcal O_\ell\mathcal M_\ell]_F\subseteq X$.
\end{lemma}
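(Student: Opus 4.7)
The plan is to simply unwind the defining conditions on $\mathcal O_\ell$, $\mathcal N_\ell$, and $\mathcal M_\ell$, and then use the transition rule of the automaton $\mathcal A$ together with the elementary fact that the Zariski closure of a subset of $G$ is the smallest closed subvariety of $G$ containing it. Concretely, recall from Section~\ref{sect-automaton} that $\delta(w) = Z_\ell$ means precisely that $(X - [w]_F)^{q^{-|w|}}(K) = Z_\ell$, so that $Z_\ell = Y_u(K)$ where $Y_u := (X - [u]_F)^{q^{-|u|}}$ is a closed subvariety of $G$. This will be the key translation between the combinatorics of $\mathcal A$ and the geometry of $G$.

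For the inclusion $\mathcal N_\ell \subseteq \mathcal M_\ell$, I take $v \in \mathcal N_\ell$ and pick $u \in \mathcal O_\ell$ with $uv \in \mathcal E$. Since $\mathcal E \subseteq \mathcal L$, I have $[uv]_F = [u]_F + F^{|u|}[v]_F \in X$, and the defining property of $\mathcal O_\ell$ gives $\delta(u) = Z_\ell$, hence $Z_\ell = Y_u(K)$. As $[v]_F \in G(K)$ and $F^{|u|}[v]_F \in X - [u]_F$, I conclude $[v]_F \in Y_u(K) = Z_\ell \subseteq \overline{Z_\ell}$, i.e.\ $v \in \mathcal M_\ell$. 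For the inclusion $[\mathcal O_\ell \mathcal M_\ell]_F \subseteq X$, I take $u \in \mathcal O_\ell$ and $v \in \mathcal M_\ell$ and again exploit $\delta(u) = Z_\ell$, so $Z_\ell = Y_u(K)$. Since $Y_u$ is a closed subvariety of $G$ containing $Z_\ell$, it contains $\overline{Z_\ell}$, and therefore contains $[v]_F$. This gives $F^{|u|}[v]_F \in X - [u]_F$, whence $[uv]_F \in X$ as required.

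Both inclusions are really just definition chasing, so I do not expect any real obstacle. The only mildly delicate point is that a single state $Z_\ell$ may be reached by many different words $u$, producing possibly different varieties $Y_u$; the argument works uniformly because each such $Y_u$ is a closed subvariety of $G$ satisfying $Y_u(K) = Z_\ell$, and hence automatically contains $\overline{Z_\ell}$. This is precisely what allows us to decouple the geometric content (the containment in $\overline{Z_\ell}$) from the combinatorial content (the path in $\mathcal A$ that got us there).
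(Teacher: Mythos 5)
Your proof is correct and follows essentially the same route as the paper's. The only cosmetic difference is in the second inclusion: the paper first observes $\overline{Z_\ell}(K) = Z_\ell$ and then applies the identity $Z_\ell = (X-[u]_F)^{q^{-|u|}}(K)$, whereas you go directly through the variety $Y_u = (X-[u]_F)^{q^{-|u|}}$ and use that $Y_u$, being closed and containing $Z_\ell$, must contain $\overline{Z_\ell}$; both formulations encode the same geometric fact and immediately give $[uv]_F \in X$.
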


\begin{proof}
Suppose $v\in\mathcal N_\ell$.
Then there is $wa\in\mathcal O_\ell$ such that $wav\in\mathcal E$.
Hence
\begin{eqnarray*}
[wav]_F\in X
&\implies&
[wa]_F+F^{|w|+1}([v]_F)\in X\\
&\implies&
F^{|w|+1}([v]_f)\in X-[wa]_F\\
&\implies&
[v]_F\in(X-[wa]_F)^{q^{-|w|-1}}(K)=Z_\ell.
\end{eqnarray*}
This proves that $\mathcal N_\ell\subseteq\mathcal M_\ell$.

Now, suppose $wa\in\mathcal O_\ell$ and $v\in\mathcal M_\ell$.
Then $z:=[v]_F\in\overline{Z_\ell}(K)=Z_\ell$.
(This last equality follows from the fact that $Z_\ell$ is the set of $K$-points of some variety.)
But as $wa$ moves $\mathcal A$ into the state $Z_\ell$ we have that
$Z_\ell=(X-[wa]_F)^{q^{-|w|-1}}(K)$.
Hence
$$[wav]_F=[wa]_F+F^{|w|+1}(z)\in X$$
as desired.
\end{proof}

Putting Lemma~\ref{njmj} together with~(\ref{decomp}) we get
\begin{equation}
\label{Fdecomp}
[\mathcal E]_F\ \subseteq\ \bigcup_{k=1}^m[\mathcal E_k]_F\cup\bigcup_{\ell=m+1}^n\mathcal [O_\ell\mathcal M_\ell]_F\ \subseteq\ X
\end{equation}

\begin{lemma}
\label{ksparse}
For each $k\leq m$, $\mathcal E_k$ is sparse.
\end{lemma}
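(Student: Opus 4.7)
The plan is to prove the stronger statement that $|\mathcal{E}_{k,n}|$ — the number of length-$n$ words in $\mathcal{E}_k$ — is uniformly bounded in $n$, from which sparsity follows immediately.

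First, I will show that for any $w, w' \in \mathcal{E}_k$ with $|w| = |w'| = n$, the difference $[w]_F - [w']_F$ lies in $\operatorname{Stab}(X) \cap \Gamma$. This uses the hypothesis that $\delta(w) = \delta(w') = Z_k$ is a state of maximal dimension $\dim \overline{Z_k} = d = \dim X$: both $(X-[w]_F)^{q^{-n}}$ and $(X-[w']_F)^{q^{-n}}$ are irreducible varieties of dimension $d$ whose $K$-points coincide with $Z_k$, so both must equal $\overline{Z_k}$ as varieties. Applying the $n$-fold Frobenius to this equality gives $X-[w]_F = X-[w']_F$, whence $[w]_F - [w']_F \in \operatorname{Stab}(X)$. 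Combined with property $(!)$ of $\mathcal{E}$, which asserts that same-length elements of $\mathcal{E}$ whose difference lies in $E$ must coincide, this forces distinct length-$n$ words in $\mathcal{E}_k$ to represent distinct cosets of $E \cap \Gamma$ in $\operatorname{Stab}(X) \cap \Gamma$. Consequently, $|\mathcal{E}_{k,n}| \leq [\operatorname{Stab}(X) \cap \Gamma : E \cap \Gamma]$.

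The main obstacle — and the core technical work required — is to show that this index is finite. Using that $[\operatorname{Stab}(X) : \operatorname{Stab}(X)^0]$ is finite, the task reduces to showing that the image of $\operatorname{Stab}(X)^0 \cap \Gamma$ in the algebraic quotient $H := \operatorname{Stab}(X)^0/E$ is finite. By maximality of $E$, the connected commutative algebraic group $H$ admits no non-trivial connected subgroup defined over $\mathbb{F}_p^{\alg}$. I would complete the argument by analysing the Chevalley decomposition of $H$: the abelian-variety piece has trivial $K/\mathbb{F}_p^{\alg}$-trace, so by Lang-N\'eron its $K$-points are finitely generated and, combined with the $\mathbb{Z}[F]$-module structure and the $F$-purity of $\Gamma$ secured by Reduction~\ref{fpure}, contribute finitely to the image of $\Gamma$; the torus and unipotent pieces are handled by variants of the structural arguments developed in Section~\ref{sect-ml}. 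The hardest subcase will be the unipotent component, where the distinction between finite generation as a $\mathbb{Z}[F]$-module and as a group is subtle.

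Once this finiteness is established, the bound $|\mathcal{E}_{k,n}| \leq [\operatorname{Stab}(X) \cap \Gamma : E \cap \Gamma]$ is uniform in $n$, so $\mathcal{E}_k$ is sparse (with in fact at most linear total growth).
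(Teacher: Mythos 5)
Your first paragraph is correct and closely parallels the opening of the paper's own argument: from $\delta(w)=\delta(w')=Z_k$ with $k\le m$ of maximal dimension, taking Zariski closures of $(X-[w]_F)^{q^{-n}}(K)=(X-[w']_F)^{q^{-n}}(K)$ and applying $F^n$ gives $[w]_F-[w']_F\in\operatorname{Stab}(X)$, and combining this with property $(!)$ yields the bound $|\mathcal E_{k,n}|\le[\operatorname{Stab}(X)\cap\Gamma:E\cap\Gamma]$.

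The gap is in your second and third paragraphs: you never establish that this index is finite, and your plan for doing so does not work. The quotient $H:=\operatorname{Stab}(X)^0/E$ does not inherit an $F$-action, because $\operatorname{Stab}(X)$ is an algebraic subgroup defined over $K$ that need not be $F$-invariant; so the ``$\mathbb{Z}[F]$-module structure'' and ``$F$-purity'' ingredients you invoke for the various pieces of the Chevalley decomposition of $H$ are simply not available. The claim is only automatic when rigidity holds (for instance when $G$ is semiabelian, where every connected algebraic subgroup is over $\mathbb F_p^{\alg}$, hence $E=\operatorname{Stab}(X)^0$ and the index is the number of components of $\operatorname{Stab}(X)$); for a general commutative algebraic group $\operatorname{Stab}(X)^0$ can be strictly larger than $E$, and bounding $\operatorname{Stab}(X)\cap\Gamma$ modulo $E\cap\Gamma$ is itself a Mordell--Lang-type statement, dangerously close to what is being proved. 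Note also that the uniform boundedness of $|\mathcal E_{k,n}|$ you are aiming for is strictly stronger than sparsity and is not asserted by the paper.

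The paper sidesteps all of this. Rather than trying to bound $|\mathcal E_{k,n}|$ uniformly, it argues by contradiction: if $\mathcal E_k$ were not sparse then by the pumping characterisation in \cite[Proposition~7.1]{fsets-SML} there would be $u,a,b,v\in\Sigma^*$, with $a,b$ distinct nonempty words of equal length $\lambda$, such that $u\{a,b\}^*v\subseteq\mathcal E_k$. The crucial extra structure furnished by the pumping is the identity $g(aw_1,aw_2)=F^{\lambda}g(w_1,w_2)$ for the differences $g(w_1,w_2):=[uw_1]_F-[uw_2]_F$, which shows the set of such differences is preserved by $F^{\lambda}$. Hence the Zariski closure of the subgroup they generate is $F^{\lambda}$-invariant, therefore defined over a finite field, therefore has identity component contained in $E$; pigeonholing among $N+2$ pumped words (with $N$ the number of components) then contradicts $(!)$. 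It is exactly this $F$-invariance of the set of differences --- which your approach lacks, since you compare arbitrary same-length words rather than pumped ones --- that lets the finite-field-definability be \emph{deduced} rather than assumed, and it is the missing idea you would need to supply.
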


\begin{proof}
It follows from the general theory of regularity that $\mathcal E_k$ is a regular sublanguage of $\mathcal E$.
Suppose $\mathcal E_k$ is not sparse.
Then, by one of the equivalent characterisations of sparsity given in~\cite[Proposition~7.1]{fsets-SML}, there exist $u,a,b,v\in\Sigma^*$ with $a$ and $b$ distinct nonempty words of the same length, such that
$u\{a,b\}^*v\subseteq\mathcal E_k$.

Let $w_1,w_2\in \{a,b\}^*$ be words of the same length.
Then
$$(X-[uw_1v])^{q^{-|uw_1v|}}(K)=Z_k=(X-[uw_2v])^{q^{-|uw_2v|}}(K).$$
Since $k\leq m$, taking Zariski closures yields
$$(X-[uw_1v])^{q^{-|uw_1v|}}=(X-[uw_2v])^{q^{-|uw_2v|}}.$$
Transforming by $F^{|uw_1v|}= F^{|uw_2v|}$ and translating by $F^{|uw_1|}([v])=F^{|uw_2|}([v])$, we get that $g(w_1,w_2):=[uw_1]-[uw_2]\in\operatorname{Stab}(X)$.
Letting $\lambda:=|a|$, note that
$$g(aw_1,aw_2)=([a]+F^\lambda([uw_1]))-([a]+F^\lambda([uw_2]))=F^\lambda g(w_1,w_2).$$
So $\{g(w_1,w_2):w_1,w_2\in \{a,b\}^*, |w_1|=|w_2|\}$ is preserved by $F^\lambda$.
Hence, so is the Zariski closure $H\leq\operatorname{Stab}(X)$ of the subgroup generated by this set.
It follows that~$H$, and hence its connected component $H_0$, is  over a finite field.
So $H_0\leq E$.

Let $N$ be the index of $H_0$ in $H$.
Fix $N+2$ distinct words $w_0,\dots,w_{N+1}\in\{a,b\}^*$ of the same length.
Then $\{g(w_0,w_j):j=1,\dots,N+1\}\subseteq H$ cannot all lie in distinct cosets of $H_0$.
So for some $i\neq j$ we have that
$$[uw_jv]-[uw_iv]=[uw_j]-[uw_i]=g(w_0,w_i)-g(w_0,w_j)\in H_0\leq E.$$
But as $uw_iv,uw_jv\in\mathcal E$ are distinct but of the same length, this contradicts~(\ref{!}).
\end{proof}

\begin{reduction}
\label{om}
It suffices to prove that for each sparse $\mathcal O\subseteq\Sigma^*$ and $U\in \mathcal F$, if
$$[\mathcal O]_F\star U:=\{[w]_F+F^{|w|}(\gamma):w\in\mathcal O, \gamma\in U\}$$
then $[\mathcal O]_F\star U\subseteq T\subseteq X$ for some $T\in\mathcal F$.
\end{reduction}

\begin{proof}
Since $\mathcal E_k$ is sparse for each $k\leq m$ by Lemma~\ref{ksparse}, and $\Gamma$ is $F$-pure in $G(K)$ by Reduction~\ref{fpure}, we may apply Corollary~\ref{iml-sparse} to see that each $\mathcal E_k\subseteq T\subseteq X$ for some $T\in\mathcal F$.
Hence, by~(\ref{Fdecomp}), and Reduction~\ref{toE}, it suffices to show that, for all $\ell+m+1,\dots,n$, $\mathcal [O_\ell\mathcal M_\ell]_F\subseteq T\subseteq X$ for some $T\in\mathcal F$.
But $[O_\ell\mathcal M_\ell]_F=[O_\ell]_F\star[\mathcal M_\ell]_F$ by definition, $\mathcal O_\ell$ is sparse by Lemma~\ref{ksparse} as it is contained in $\bigcup_{k=1}^m\mathcal E_k\Sigma$, and $[\mathcal M_\ell]_F=\overline{Z_\ell}\cap\Gamma\in\mathcal F$ by induction since $\ell>m$.
\end{proof}

Taking finite unions we may assume that $\mathcal O=u_1w_1^*u_2\cdots u_kw_k^*u_{k+1}$ is a simple sparse language and that $U=b+S+\Lambda$ where $S=S(b_1,\dots,b_t;\delta)\subseteq\Gamma$ for some $b,b_1,\dots,b_2\in G$ and $\Lambda\leq\Gamma$ is a $\mathbb Z[F^\delta]$-submodule for some $\delta>0$.
We may assume that $k>0$ as the result is trivial for $k=0$.
By Lemma~\ref{expandsparse} we have
$$[\mathcal O]_F=a_0+E(a_1,\dots,a_r;\delta_1,\dots,\delta_r)$$
for some $a_0,\dots,a_r\in G$ and $\delta_1,\dots,\delta_r>0$, and for any $v\in\Sigma^*$
$$[\mathcal Ov]_F=a_0+E(a_1,\dots,a_{r-1},a_r+F^\ell[v]_F;\delta_1,\dots,\delta_r)$$
where $\ell=\sum_{i=1}^{k+1}|u_i|$.
Hence
\begin{eqnarray*}
[\mathcal O]_F\star U
&=&
\bigcup_{\gamma\in U}[\mathcal Ov_\gamma]_F\ \ \text{ where }\gamma=[v_\gamma]_F\\
&=&
\bigcup_{\gamma\in S+\Lambda}a_0+E(a_1,\dots,a_{r-1},a_r+F^\ell(b+\gamma);\delta_1,\dots,\delta_r)\\
&=&
\bigcup_{\gamma\in S+\Lambda}a_0+E(a_1,\dots,a_{r-1},a_r+F^\ell b,F^\ell\gamma;\delta_1,\dots,\delta_r,\delta)
\end{eqnarray*}
where the final equality uses the fact that $S+\Lambda$ is $F^\delta$-invariant.
As pointed out in the proof of Proposition~\ref{et}, setting $\rho=\operatorname{lcm}\{\delta_1,\dots,\delta_r,\delta\}$, each
$$a_0+E(a_1,\dots,a_{r-1},a_r+F^\ell b,F^\ell\gamma;\delta_1,\dots,\delta_r,\delta)$$
can be written as the union of the sets
$$a_0+E(F^{m_1\delta_1}a_1,F^{m_1\delta_1+m_2\delta_2}a_2,\dots,F^{\sum_{i=1}^rm_i\delta_i}(a_r+F^\ell b), F^{\sum_{i=1}^rm_i\delta_i+m_{r+1}\delta+\ell}\gamma;\rho)$$
where $0\leq m_i\leq\frac{\rho}{\delta_i}$ and $0\leq m_{r+1}\leq\frac{\rho}{\delta}$.
For ease of notation, fix  $\mu:=(m_1,\dots,m_{r+1})$ and let
\begin{eqnarray*}
a_{\mu,j}
&:=&
F^{\sum_{i=1}^jm_i\delta_i}a_j\ \ \text{ for }i=1,\dots,r-1\\
a_{\mu,r}
&:=&
F^{\sum_{i=1}^rm_i\delta_i}(a_r+F^\ell b)\ \ \text{ and}\\
r_\mu
&:=&
\sum_{i=1}^rm_i\delta_i+m_{r+1}\delta+\ell.
\end{eqnarray*}
We thus have
\begin{eqnarray*}
[\mathcal O]_F\star U
&=&
\bigcup_{\gamma\in S+\Lambda}\bigcup_\mu a_0+E(a_{\mu,1},\dots,a_{\mu,r},F^{r_\mu}\gamma;\rho).
\end{eqnarray*}
It therefore suffices to prove that, for each of these finitely many $\mu$, the set 
$$E_\mu:=\bigcup_{\gamma\in S+\Lambda} a_0+E(a_{\mu,1},\dots,a_{\mu,r},F^{r_\mu}\gamma;\rho)$$
is contained in an element of $\mathcal F$ that is contained in $X$.
Now, by the $F$-purity of $\Gamma$ in $G(K)$ given by Reduction~\ref{fpure}, we can apply Proposition~\ref{et} to see that each
$$a_0+E(a_{\mu,1},\dots,a_{\mu,r},F^{r_\mu}\gamma;\rho)$$
is contained in an element of $\mathcal F$ that is contained in $X$.
In fact, as explained in  Remark~\ref{et-better}, the proof of that proposition shows that
$$a_0+E(a_{\mu,1},\dots,a_{\mu,r},F^{r_\mu}\gamma;\rho)\subseteq a_0+S(a_{\mu,1},\dots,a_{\mu,r},F^{r_\mu}\gamma;\rho)\subseteq X\cap\Gamma.$$
Hence
$E_\mu\subseteq T\subseteq X$
where $T\subseteq\Gamma$ is given by
\begin{eqnarray*}
T
&:=&
\bigcup_{\gamma\in S+\Lambda}a_0+S(a_{\mu,1},\dots,a_{\mu,r},F^{r_\mu}\gamma;\rho)\\
&=&
a_0+S(a_{\mu,1},\dots,a_{\mu,r};\rho)+\bigcup_{\gamma\in S+\Lambda}S(F^{r_\mu}\gamma;\rho)\\
&=&
a_0+S(a_{\mu,1},\dots,a_{\mu,r};\rho)+F^{r_\mu}(S+\Lambda)
\end{eqnarray*}
where the final equality uses that $S+\Lambda$, and hence $F^{r_\mu}(S+\Lambda)$, is $F^\rho$-invariant.
So $T\in\mathcal F$, as desired.

This completes the proof of Theorem~\ref{iml-module}.\qed

\bigskip

\appendix

\section{Trivialising multiplicative tori}

\begin{proposition}
\label{subsect-tori}
Suppose $M$ is a multiplicative torus over $\Fq$; that is, it is an algebraic group that is isomorphic to $\bG_m^d$ over $\Fq^{\alg}$.
Then there exists an isomorphism between $M$ and $\bG_m^d$ over $\mathbb F_{q^\ell}$ with $\ell\leq \max(2^{11} 11!, 2^d d!)$.
\end{proposition}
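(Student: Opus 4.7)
The plan is to translate the problem into the Galois classification of tori and then reduce it to a purely combinatorial bound on the order of a finite-order matrix in $GL_d(\mathbb{Z})$.

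First, I would invoke the standard anti-equivalence between $d$-dimensional tori over $\Fq$ and free abelian groups of rank $d$ equipped with a continuous action of $\operatorname{Gal}(\Fq^{\alg}/\Fq)$: to $M$ one associates its character group $X^*(M) := \operatorname{Hom}(M_{\Fq^{\alg}}, \bG_m)$. Since $\operatorname{Gal}(\Fq^{\alg}/\Fq) \cong \widehat{\mathbb Z}$ is topologically generated by the $q$-power Frobenius $\sigma$, and since $X^*(M) \cong \mathbb Z^d$ is finitely generated, this action factors through a finite cyclic quotient. After choosing a basis, $\sigma$ is represented by a finite-order matrix $A \in GL_d(\mathbb Z)$, and $M$ splits over $\mathbb F_{q^\ell}$ if and only if $\operatorname{Gal}(\Fq^{\alg}/\mathbb F_{q^\ell})$ acts trivially on $X^*(M)$; equivalently, $A^\ell = I$, i.e., $\ell$ is a multiple of the order of $A$.

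The problem now reduces to bounding the order of any finite-order $A \in GL_d(\mathbb Z)$ by $\max(2^{11}\cdot 11!,\, 2^d \cdot d!)$. For this, I would use that the characteristic polynomial $\chi_A \in \mathbb Z[X]$ is monic of degree $d$ with all complex roots on the unit circle; by Kronecker's theorem these roots are roots of unity, so $\chi_A$ factors as $\prod_{i=1}^k \Phi_{n_i}(X)$ for cyclotomic polynomials $\Phi_{n_i}$, giving the identities $\sum_i \varphi(n_i) = d$ and $\operatorname{ord}(A) = \operatorname{lcm}(n_1,\ldots, n_k)$. Thus the geometric question becomes the purely number-theoretic one of bounding $\operatorname{lcm}(n_1,\ldots,n_k)$ subject to $\sum_i \varphi(n_i) = d$.

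The final step is to establish the combinatorial estimate $\operatorname{lcm}(n_i) \leq \max(2^{11}\cdot 11!,\, 2^d \cdot d!)$. For $d \leq 11$, I would use the universal bound $2^{11}\cdot 11!$, which is far larger than needed and can be verified by crude enumeration (using for instance $\varphi(m) \geq \sqrt{m/2}$, so each $n_i \leq 2d^2 \leq 242$, bounding the lcm of at most $d \leq 11$ such integers). For $d \geq 12$, the same $\varphi$-lower bound controls each $n_i$, and careful prime-power bookkeeping on $\operatorname{lcm}(n_i)$ yields the bound $2^d \cdot d!$; the latter is very generous (on the scale of $(2d/e)^d$), so the argument is routine in character. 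The main obstacle is precisely this last combinatorial estimate, where writing down explicit constants and handling the threshold $d = 11$ cleanly requires some care, even though the bounds themselves are loose.
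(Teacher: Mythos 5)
Your first two steps are a valid (and arguably cleaner) alternative route to the paper's reduction. The paper works concretely with the coordinate ring $T$ of $M$, writes $\Phi:T\otimes_{\Fq}\Fql\to\Fql[t_1^{\pm1},\ldots,t_d^{\pm1}]$ for the least $\ell$ admitting such a splitting, reads off a matrix $A\in\operatorname{GL}_d(\mathbb Z)$ from how the conjugated Frobenius $\sigma=\Phi\tau\Phi^{-1}$ permutes (up to scalar) the monomials $t_i$, and then needs a Hilbert 90 / minimality argument to show $A$ has order exactly $\ell$ (the minimality of $\ell$ is used precisely to kill the scalar factors $\alpha_i$). You instead invoke the anti-equivalence between tori and finite $\operatorname{Gal}(\Fq^{\alg}/\Fq)$-lattices, which hands you the matrix $A$ and the identification ``minimal splitting degree $=\operatorname{ord}(A)$'' for free, at the price of using a bigger black box. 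Both are fine; your version hides the work the paper does by hand inside the equivalence of categories.

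The gap is in your final step. You treat the bound on the order of a torsion element of $\operatorname{GL}_d(\mathbb Z)$ as a routine combinatorial exercise subject to $\sum_i\varphi(n_i)\le d$, but this is precisely a theorem of Feit \cite{feit}, which is what the paper cites: for $d>10$ the maximal order is exactly $2^d d!$, and the term $2^{11}\cdot 11!$ covers the exceptional small $d$. Feit's result is not ``careful prime-power bookkeeping''; it is a genuine theorem (closely tied to classifying the maximal finite subgroups of $\operatorname{GL}_d(\mathbb Q)$), and your sketch does not establish it. Worse, the bound you actually write down for $d\le 11$ fails: each $n_i\le 2d^2\le 242$ and there are at most $11$ of them gives only $\operatorname{lcm}(n_i)\le\prod n_i\le 242^{11}\approx 1.6\times 10^{26}$, which is far larger than $2^{11}\cdot 11!\approx 8.2\times 10^{10}$. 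For $d\ge 12$ you offer no argument at all beyond asserting it is ``routine in character,'' which it is not. The clean fix is simply to cite Feit's theorem at this point, as the paper does; with that citation your proof becomes correct.
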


\begin{proof}
Write $M={\rm Spec}(T)$ where $T$ is a finitely generated (geometrically) integral $\mathbb{F}_q$-algebra.
Let $\ell$ be least such that there exists an isomorphism between $M$ and $\bG_m^d$ over $\mathbb F_{q^\ell}$.
We have an $\Fql$-algebra isomorphism
$\Phi:T\otimes_{\mathbb{F}_q} \mathbb{F}_{q^{\ell}}\to \mathbb{F}_{q^{\ell}}[t_1^{\pm 1},\ldots ,t_d^{\pm 1}]$.
Let $F:\Fql\to\Fql$ denote the $q$-power Frobenius automorphism, and $\tau:=\id\otimes F$ the induced $\Fq$-algebra automorphism of $T\otimes_{\mathbb{F}_q} \mathbb{F}_{q^{\ell}}$.
Let $\sigma:=\Phi\tau\Phi^{-1}$ be the corresponding $\Fq$-algebra automorphism of $\mathbb{F}_{q^{\ell}}[t_1^{\pm 1},\ldots ,t_d^{\pm 1}]$.
Note that both $\tau$ and $\sigma$ extend $F$ on $\Fql$ and are of order $\ell$.

For each $i=1,\dots,d$ we have that $\sigma(t_i)=\alpha_i t_1^{a_{i,1}}\cdots t_d^{a_{i,d}}$ for some $\alpha_i\in \Fql^*$ and some integers $a_{i,j}$ such that $A=(a_{i,j})\in\operatorname{GL}_d(\mathbb Z)$.
Note that $A^\ell=\id$.
We claim that $A$ is of order $\ell$.
This will suffice because the maximal order of a torsion element in $\operatorname{GL}_d(\mathbb Z)$, at least when $d>10$, is $2^dd!$ by a theorem of Feit~\cite{feit}.

Suppose, toward a contradiction, that the order of $A$ is $k<\ell$.
Then, for each $i=1,\dots,d$, $\sigma^k(t_i)=\alpha_i t_i$ for some $\alpha_i\in\Fql^*$.
On the other hand, $\sigma^\ell(t_i)=t_i$.
Hence, letting $m:=\frac{\ell}{k}$, we get that
$$1=\alpha_i\sigma^k(\alpha_i)\sigma^{2k}(\alpha_i)\cdots \sigma^{(m-1)k}(\alpha_i)=\alpha_i F^k(\alpha_i) F^{2k}(\alpha_i)\cdots F^{(m-1)k}(\alpha_i).$$  
That is, each $\alpha_i$ has norm 1 over $\mathbb F_{q^m}$, and so by Hilbert's Theorem 90, there exist $\lambda_i\in\Fql^*$ such that $\alpha_i=\frac{\lambda_i}{F^k\lambda_i}$.
Hence, after a change of variables $t_i\mapsto\lambda_i t_i$, which corresponds to modifying $\Phi$, we may assume that $\sigma^k(t_i)=t_i$ for all $i$.
It follows that the fixed subring of $\sigma^k$ is $\mathbb F_{q^m}[t_1^{\pm 1},\ldots ,t_d^{\pm 1}]$.
Of course the fixed subring of $\tau^k=\id\otimes F^k$ is $T\otimes_{\Fq}\mathbb F_{q^m}$, and $\Phi$ restricts to an isomorphism of the fixed subrings.
We thus have $T\otimes_{\Fq}\mathbb F_{q^m}$ isomorphic to $\mathbb F_{q^m}[t_1^{\pm 1},\ldots ,t_d^{\pm 1}]$, contradicting the minimality of $\ell$.
\end{proof}

\bigskip
\section{Finding a $K^q$-basis}
\label{subsect-compK}
\noindent
Suppose we are given a finite presentation of an $\mathbb{F}_q$-algebra $R$ and $K={\rm Frac}(R)$.
In this appendix we explain how to find a basis for $K$ over $K^q$.

We have a set of generators $x_1,\ldots ,x_m$ for $R$ as an $\mathbb{F}_q$-algebra.
Using Gr\"obner basis, after reordering our variables if necessary, we can find some $t\le m$ such that $x_1,\ldots ,x_t$ are algebraically independent over $\mathbb{F}_q$ and such that $x_j$ is algebraic over $L:=\mathbb{F}_q(x_1,\ldots ,x_t)$ for $t+1\le j\le m$.
We impose a degree lexicographic ordering on the variables $x_{t+1},\ldots ,x_m$ and use Gr\"obner bases to find a collection of monomials $a_1,\ldots ,a_r$ in $x_{t+1},\ldots ,x_m$ such that the following hold:
\begin{itemize}
\item for each $j=t+1,\dots,m$, some power of $x_j$ is in $\{a_1,\ldots ,a_r\}$;
\item the (finite) collection $\mathcal{M}$ of monomials in $x_{t+1},\ldots ,x_m$ that are not a multiple of some element of $a_1,\ldots ,a_r$ form a basis for $K$ over $L$;
\item each element in $\mathcal{M}$ is degree lexicographically less than some $a_i$;
\item for each $j=1,\dots,r$ we have an explicit expression
\begin{equation}
\label{eq:aj}
a_j = \sum_{b\in \mathcal{M}} \lambda_{j,b} b,
\end{equation}
with $\lambda_{j,b}\in L$.
\end{itemize}
Applying the $q$-Frobenius isomorphism to $\mathcal M$ we see that
$$\mathcal{A}:=\{u^q\colon  u\in \mathcal{M}\}$$
is a basis for $K^q$ over $L^q$.
Moroever, as a basis for $L$ over $L^q$ is given by $x_1^{i_1}\cdots x_t^{i_t}$ with $0\le i_1,\ldots ,i_t<q$, we have that
$$\mathcal{U}:=\{x_1^{i_1}\cdots x_t^{i_t}u\colon  0\le i_1,\ldots ,i_t<q, u\in \mathcal{M}\}$$
is a basis for $K$ over $L^q$.

Note that as $K^q$ contains $L^q$, the set $\mathcal U$ spans $K$ as a $K^q$-vector space.
We wish to refine this into a basis $\mathcal B$.
Let $u_1,\ldots ,u_r$ be an enumeration of~$\mathcal{U}$.  We begin at step $0$ with $\mathcal{B}=\emptyset$.  At step $i$, to determine whether $u_i$ is in our basis, we check whether $u_i$ is in the $L^q$-span of 
the set $\displaystyle \mathcal{B}_i:= \bigcup_{j<i} u_j\mathcal{A}$.
Notice that for each $a\in \mathcal{A}$ and each $j<i$, using the relations given by~(\ref{eq:aj}), we can explicitly express $u_ja$ as an $L^q$-linear combinations of elements of $\mathcal{U}$.
It then becomes a simple matter of linear algebra to determine whether $u_i$ is in the $L^q$-span of $\mathcal{B}_i$.
If it is not, we add $u_i$ to $\mathcal{B}$; otherwise, we leave $\mathcal{B}$ unchanged.  We claim that the set $\mathcal{B}$ is a basis for $K$ over $K^q$.
First, to see that it spans $K$, suppose toward a contradiction that some $u_i$ is not in the $K^q$-span of~$\mathcal B$, and assume that $i$ is least such.
Then $u_i\not\in \mathcal{B}$ and so by construction $u_i$ is in the $L^q$-span of the elements
$\bigcup_{j<i} u_j\mathcal{A}$.  But since $L^q\mathcal{A}\subseteq K^q$, we see that $u_i$ is in the $K^q$-span of $u_1,\ldots ,u_{i-1}$, which by minimality of $i$ is contained in the $K^q$-span of $\mathcal{B}$, our desired contradiction.
Now, to see linear independence, notice that if $\mathcal{B}$ is dependent then there is some $i$ for which $u_i\in \mathcal{B}$ and for which we have a relation
$\displaystyle u_i = \sum_{j<i} \lambda_j^q u_j$ with the $\lambda_j\in K$.  But now we can write each $\lambda_j^q$ as an $L^q$-linear combination of elements of $\mathcal{A}$, and this means that $u_i$ is in the $L^q$-span of the elements
$\bigcup_{j<i} u_j\mathcal{A}$, contradicting the fact that $u_i\in \mathcal{B}$.

\bigskip


\end{document}